\documentclass[a4paper,10pt]{amsart}
\usepackage{amsmath,amsthm,amssymb}
\usepackage[latin1]{inputenc}
\usepackage[T1]{fontenc}
\usepackage[all]{xy}
\usepackage{mathrsfs}
\usepackage{hyperref}

\textwidth=6.5in
\topmargin=-0.00in
\oddsidemargin=-0.00in
\evensidemargin=-0.00in
\textheight = 8.4in

\numberwithin{equation}{section}
\newtheorem{thm}[equation]{Theorem}
\newtheorem*{thm*}{Theorem}
\newtheorem{cor}[equation]{Corollary}
\newtheorem{lem}[equation]{Lemma}
\newtheorem{prop}[equation]{Proposition}
\newtheorem*{schol*}{\textbf{\emph{Sch\'olion}}}
\newtheorem{conj}[equation]{Conjecture}
\theoremstyle{definition}
\newtheorem{ex}[equation]{Example}
\newtheorem{rem}[equation]{Remark}
\newtheorem{defn}[equation]{Definition}

\DeclareMathOperator{\h}{H} 
\DeclareMathOperator{\sgn}{sgn} 
 
\newcommand{\triv}{{\mathbf{1}}}
\def\R{\mathbb R}

\def\Z{\mathbb Z}
\def\A{\mathbb A}
\def\Q{\mathbb Q}
\def\C{\mathbb C}
\def\F{\mathbb F}
\def\E{\mathbb E}
\def\K{\mathbb K}

\def\ira{\stackrel{\sim}{\rightarrow}}
\def\hra{\hookrightarrow}

\def\ra{\rightarrow}

\def\H{\mathbb H}

\def\g{\mathfrak g}
\def\gl{\mathfrak{gl}}
\def\t{\mathfrak t}

\def\a{\mathfrak a}

\def\q{\mathfrak q}

\def\h{\mathfrak h}
\def\l{\mathfrak l}
\def\m{\mathfrak m}
\def\k{\mathfrak k}
\def\p{\mathfrak p}
\def\u{\mathfrak u}
\def\<{\langle}
\def\>{\rangle}

\title[]%
{On some arithmetic properties of automorphic forms of \\
$GL_m$ over a division algebra}
\author{Harald Grobner \ \ \and \ \ A. Raghuram}
\address{Harald Grobner: Fakult\"at f\"ur Mathematik\\ Universit\"at Wien\\ Oskar--Morgenstern--Platz 1\\ A-1090 Wien\\Austria}
\email{harald.grobner@univie.ac.at}

\address{A. Raghuram: Indian Institute of Science Education and Research\\ Pashan, Pune 411021\\ India}
\email{raghuram@iiserpune.ac.in}

\keywords{Inner forms of $GL_n$, global Jacquet-Langlands, algebraic representations, regular algebraic representations, cuspidal automorphic forms, rationality properties, $A_\q(\lambda)$-modules, unitary dual}
\subjclass[2010]{Primary: 11F70, 11F75, 22E47; Secondary: 11F67}
\thanks{H.G. is supported by the Austrian Science Fund (FWF) Erwin Schr\"odinger grant, J 3076-N13. A.R. is partially supported by the National Science Foundation (NSF), award number DMS-0856113, and an Alexander von Humboldt Research Fellowship.}
\thanks{This is a version of a paper to appear in Int. J. Number Th., DOI:10.1142/S1793042114500110}

\begin{document}

\begin{abstract}
In this paper we investigate arithmetic properties of automorphic forms on the group $G' = GL_m/D$, for a central division-algebra $D$ over an arbitrary number field $F$. The results of this article are generalizations of results in the split case, i.e., $D=F$, by Shimura, Harder, Waldspurger and Clozel for square-integrable automorphic forms and also by Franke and Franke-Schwermer for general automorphic representations. We also compare our theorems on automorphic forms of the group $G'$ to statements on automorphic forms of its split form using the global Jacquet-Langlands correspondence developed by Badulescu and Badulescu-Renard. Beside that we prove that the local version of the Jacquet-Langlands transfer at an archimedean place preserves the property of being cohomological.
\end{abstract}
\maketitle

\setcounter{tocdepth}{1}
\tableofcontents

\section{Introduction and statements of results}
Let $D$ be a central division algebra of index $d$ over a number field $F$. The group $G'=GL_m/D$ of invertible $m\times m$ - matrices with entries in $D$ defines a connected, reductive group over $F$ and is an inner form of the split general linear group $G=GL_{n}/F$, $n=dm$. In the split case, i.e.,
if $D=F$, many important results on the arithmetic theory of automorphic forms are known due to several people. Within the scope of the present paper, for cusp forms one should particularly mention the work of Shimura \cite{shimura} for $n=2$ and $F$ totally real, Harder \cite{hardergl2} and Waldspurger \cite{waldsp} for $n=2$ and any $F$, Clozel \cite{clozel} for general $n$ and $F$; and for general automorphic forms of the group $G$ we mention Franke \cite{franke} and Franke--Schwermer \cite{schwfr}. In \cite{waldsp} one may also find results in the very special non-split case $m=1$ and $d=2$.

The main aim of this article is to study the arithmetic of automorphic forms on $G'$ and to generalize some of the results of the above mentioned people to
the case of $GL_m/D$ for a general $m$ and a general $D$.

For a central division algebra $D$ over $F$ and $m\geq 1$, Badulescu \cite{ioan} and Badulescu--Renard \cite{ioanrenard} have recently proved the existence of the global Jacquet-Langlands transfer $JL$ from discrete series automorphic representations of $G'(\A)$ to discrete series automorphic representations of $G(\A)$. This establishes a special instance of Langlands functoriality and forms an important instrument for the analysis of arithmetic properties of square-integrable automorphic forms of $G'(\A)$.
We also explicitly describe the interplay of the local Jacquet-Langlands transfer and cohomology.

Our first theorem, which is of an arithmetic nature, deals with the notion of a regular algebraic representation of $G'(\A)$. In the split case,  i.e., $D=F$, Clozel gave a definition of a representation being regular algebraic: A discrete series automorphic representation $\Pi$ of $G(\A)$ is called algebraic, if each of the irreducible representations in the archimedean part $\Pi_\infty$ corresponds via the local Langlands parametrization to a sum of $n$ algebraic characters of $\C^*.$ An algebraic representation $\Pi$ is furthermore called regular, if the infinitesimal character of $\Pi_\infty$ is regular. It easily follows from Clozel \cite{clozel} that a cuspidal automorphic representation $\Pi$ is regular algebraic if and only if $\Pi_\infty$ is essentially tempered and cohomological with respect to a certain algebraic coefficient system $E_\mu$.

In this paper we extend Clozel's notion as follows: We call a discrete series automorphic representation $\Pi'=\Pi'_\infty\otimes\Pi'_f$ of $G'(\A)$ algebraic (resp., regular algebraic) if its global Jacquet--Langlands transfer $\Pi=JL(\Pi')$ is. With this definition we prove the following generalization of Clozel's result, cf. Thm.\ \ref{thm:regalgG'}.

\begin{thm}
Let $\Pi'$ be a discrete series automorphic  representation of $G'(\A)$ and assume that $JL(\Pi')$ is cuspidal. Then the following are equivalent:
\begin{enumerate}
\item[(i)] $\Pi'$ is regular algebraic.
\item[(ii)] $\Pi'_\infty$ is cohomological and essentially tempered.
\end{enumerate}
\end{thm}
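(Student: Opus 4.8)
The plan is to reduce the statement to a local question at the archimedean places and then to settle that by classifying the essentially tempered cohomological representations on both sides. By our definition of regular algebraicity for $G'$, the representation $\Pi'$ is regular algebraic if and only if $\Pi:=JL(\Pi')$ is; and since $\Pi$ is assumed cuspidal, the cited consequence of Clozel's work identifies this with the condition that $\Pi_\infty$ be cohomological, with respect to some coefficient module $E_\mu$, and essentially tempered. So the theorem is equivalent to the assertion that, for $\Pi'$ as in the statement, $\Pi'_\infty$ is cohomological and essentially tempered if and only if $\Pi_\infty=JL(\Pi')_\infty$ is. Now the global Jacquet--Langlands transfer of Badulescu--Renard is assembled from the local transfers, so $\Pi_v\cong JL_v(\Pi'_v)$ at every archimedean place $v$; moreover, via the K\"unneth formula the two global conditions each decompose into the conjunction, over the archimedean $v$, of their local analogues. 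If $D$ splits at $v$ then $G'(F_v)=G(F_v)$ and $\Pi'_v=\Pi_v$, and there is nothing to check. At a non-split (necessarily real) place one has $G'(F_v)\cong GL_r(\mathbb{H})$ and $G(F_v)\cong GL_{2r}(\mathbb{R})$ with $2r=n=dm$, and it remains to prove the local statement: an irreducible representation $\pi'$ of $GL_r(\mathbb{H})$ is cohomological and essentially tempered precisely when $JL_v(\pi')$ is.

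Next I would pin down the essentially tempered dual of $GL_r(\mathbb{H})$. Because $GL_{2k}(\mathbb{R})$ carries no essentially square-integrable representation once $2k>2$, the local Jacquet--Langlands correspondence forces $GL_k(\mathbb{H})$ to have none for $k\geq2$; hence every tempered representation of $GL_r(\mathbb{H})$ is irreducibly induced, $\mathrm{Ind}_{P'}^{GL_r(\mathbb{H})}(\delta_1\otimes\cdots\otimes\delta_r)$, from a product of essentially square-integrable representations $\delta_i$ of the block factors $GL_1(\mathbb{H})=\mathbb{H}^\times$, and an essentially tempered one is such an induced representation twisted by a character. The archimedean Jacquet--Langlands transfer is compatible with parabolic induction and sends each $\delta_i$ to an essentially discrete series representation $D_{l_i}$ of $GL_2(\mathbb{R})$, so $JL_v(\pi')$ is $\mathrm{Ind}(D_{l_1}\otimes\cdots\otimes D_{l_r})$, twisted by the same character. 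In particular $\pi'$ is essentially tempered if and only if $JL_v(\pi')$ is; equivalently, $GL_r(\mathbb{H})$ and $GL_{2r}(\mathbb{R})$ have the common $L$-group $GL_{2r}(\mathbb{C})$, the transfer is compatible with the local Langlands correspondence, and essential temperedness on either side is the condition that the $L$-parameter be bounded up to an unramified twist.

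It remains to match the cohomology condition through $JL_v$. The point is that, for an essentially tempered representation in either setting, being cohomological with respect to some $E_\mu$ is governed entirely by the infinitesimal character: it holds precisely when that infinitesimal character is regular and equals the one of $E_\mu^\vee$ for a suitable algebraic $\mu$. Computing this from the inducing data, it amounts to requiring that the $r$ discrete series $D_{l_i}$ --- equivalently, the $r$ representations $\delta_i$ --- be pairwise inequivalent modulo the common twist, and this is manifestly transported by the bijection $\delta_i\leftrightarrow D_{l_i}$; since $JL_v$ identifies the infinitesimal characters of $\pi'$ and $JL_v(\pi')$ (with $\mu$ and the coefficient module on the quaternionic side matched accordingly), the two cohomology conditions coincide. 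With the previous paragraph this yields the local equivalence, hence the theorem. Alternatively one can feed in the classification of the cohomological representations of $GL_r(\mathbb{H})$ as $A_\mathfrak{q}(\lambda)$-modules established earlier in the paper and just isolate the essentially tempered ones, verifying directly that $JL_v$ carries them bijectively onto the essentially tempered cohomological representations $\mathrm{Ind}(D_{l_1}\otimes\cdots\otimes D_{l_r})$ of $GL_{2r}(\mathbb{R})$.

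The main obstacle I anticipate is precisely this archimedean analysis on the quaternionic side: since $GL_r(\mathbb{H})$ is not quasi-split, one must check carefully that, among its essentially tempered representations, cohomology is detected by the infinitesimal character --- i.e. that a relative Lie algebra cohomology computation for $\mathrm{Ind}(\delta_1\otimes\cdots\otimes\delta_r)\otimes E_\mu$ returns a nonzero answer exactly under the regularity condition above. This is where the explicit form of the tempered dual together with a Vogan--Zuckerman / Delorme-type argument (or the $A_\mathfrak{q}(\lambda)$-classification just mentioned) are genuinely needed; granting that, everything else is formal given Clozel's theorem and the local-global compatibility of $JL$.
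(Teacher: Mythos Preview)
Your overall architecture matches the paper's: reduce to Clozel's theorem for $\Pi=JL(\Pi')$ on the split side, then handle the non-split archimedean places via the local transfer. The direction (i)$\Rightarrow$(ii) is fine as you have it, and coincides with the paper's argument: once $JL(\Pi')_v$ is known to be essentially tempered and cohomological, applying $|LJ|_v$ (which preserves both properties) yields the same for $\Pi'_v$.

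There is, however, a genuine gap in your (ii)$\Rightarrow$(i). You write that ``the global Jacquet--Langlands transfer of Badulescu--Renard is assembled from the local transfers, so $\Pi_v\cong JL_v(\Pi'_v)$.'' This is not what the global theorem gives you. What Badulescu--Renard prove is the compatibility $|LJ|_v(JL(\Pi')_v)=\Pi'_v$, with $|LJ|_v$ going from $G_v$ to $G'_v$; and $|LJ|_v$ is \emph{not} injective in general (cf.\ the paper's Thm.~\ref{thm:irrunitJL}). So there is no well-defined local map $JL_v$ in the direction you need, and knowing $\Pi'_v$ does not by itself pin down $JL(\Pi')_v$. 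Your essentially tempered representation $\mathrm{Ind}(D_{l_1}\otimes\cdots\otimes D_{l_r})$ is certainly \emph{one} preimage of $\Pi'_v$ under $|LJ|_v$, but you have not excluded that $JL(\Pi')_v$ is a different (non-tempered) preimage.

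This is exactly where the paper uses the cuspidality hypothesis on $JL(\Pi')$: by Shalika, each $JL(\Pi')_v$ is generic; by Vogan, a generic representation of $GL_{2r}(\R)$ is fully induced from (limits of) essentially discrete series; and then the relation $|LJ|_v(JL(\Pi')_v)=J'(w_v,\ell_v)$, together with the compatibility of $|LJ|_v$ with parabolic induction, forces the inducing data to be precisely the $D(w_v,\ell_{i,v})$, so that $JL(\Pi')_v\cong J(w_v,\ell_v)$. Only after this step is your infinitesimal-character matching argument legitimate. (The paper's Remark~\ref{ex:(ii)->(i)} in fact records that without the cuspidality of $JL(\Pi')$ one does not know how to nail down $JL(\Pi')_v$.) You should insert this genericity argument before invoking your local bijection on the tempered side.
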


Furthermore, the implication (ii)$\Rightarrow$(i) does not seem to need the assumption that $JL(\Pi')$ is cuspidal. See Remark \ref{ex:(ii)->(i)}. In contrast, the implication (i)$\Rightarrow$(ii) may fail without assuming $JL(\Pi')$ is cuspidal, cf. Ex. \ref{ex:(i)not(ii)}.

Given our definition of an algebraic representation, we also generalize Clozel's ``Lemme de puret\'e'' to the case of $G'$, cf. Lem.\ \ref{lem:pureteG'}:

\begin{lem}[Purity Lemma for $G'$]
If $\Pi'$ is an algebraic representation of $G'(\A)$ and $JL(\Pi')$ is cuspidal, then there is a {\sf w} $\in\Z$ such that for all archimedean places $v$, the algebraic characters of $\C^*$ associated to $JL(\Pi')_v|\cdot|_v^{\frac{1-n}{2}}$ are of the form $z\mapsto z^p(\overline z)^q$ with $p+q=$ {\sf w}.
\end{lem}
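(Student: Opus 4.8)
The plan is to show that, once the new definition of ``algebraic'' is unwound, the assertion is precisely Clozel's original ``Lemme de puret\'e'' for $GL_n$, applied to $\Pi:=JL(\Pi')$. Indeed, by our definition ``$\Pi'$ algebraic'' means exactly that $\Pi$ is algebraic in Clozel's sense, while by hypothesis $\Pi$ is cuspidal; moreover $JL(\Pi')$ is in any case a discrete series automorphic representation of $G(\A)=GL_n(\A)$, hence unitary. Thus $\Pi$ is a unitary cuspidal algebraic automorphic representation of $GL_n(\A)$, and since the conclusion of the lemma only refers to $JL(\Pi')_v=\Pi_v$, it is literally Clozel's purity statement for $\Pi$, cf.\ \cite{clozel}. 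So I would first make this reduction explicit, and then either quote Clozel's lemma directly or reproduce its short proof, which goes as follows.

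Since $\Pi$ is cuspidal it is globally generic (Shalika), hence each archimedean component $\Pi_v$ is a generic irreducible unitary representation of $GL_n(F_v)$; its Langlands data therefore lie in the generic unitary dual, so $\Pi_v$ is parabolically induced from (essentially) square-integrable representations of $GL_1$'s and $GL_2$'s, the only possible non-tempered feature being complementary-series shifts by $|\det|^{\pm s}$ with $0<s<\tfrac12$. Restricting the Langlands parameter to $W_\C=\C^*$ and twisting by $|\cdot|_v^{\frac{1-n}{2}}$ --- which shifts each exponent by the single amount $\tfrac{1-n}{2}$, uniformly in $v$ --- algebraicity says the $n$ resulting characters $z\mapsto z^{p_j}(\overline z)^{q_j}$ satisfy $p_j,q_j\in\Z$. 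But a complementary-series shift by $s$ would produce two such exponents differing by $2s$, forcing $2s\in\Z$, which is impossible for $0<s<\tfrac12$. Hence $\Pi_v$ is tempered at every archimedean $v$. For tempered $\Pi_v$ the parameter restricted to $\C^*$ is a sum of $n$ characters $z\mapsto z^{p_j^\circ}(\overline z)^{q_j^\circ}$ with each $p_j^\circ+q_j^\circ$ purely imaginary (this is immediate from the explicit parameters of the discrete series of $GL_1(\R)$, $GL_1(\C)$ and $GL_2(\R)$). The twist by $|\cdot|_v^{\frac{1-n}{2}}$ replaces $p_j^\circ+q_j^\circ$ by $p_j^\circ+q_j^\circ+(1-n)$; algebraicity forces this to be an integer, and since it differs from a purely imaginary number by the integer $1-n$ it must equal $1-n$. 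Thus ${\sf w}:=1-n$ works simultaneously at all archimedean places. (If one does not normalize $\Pi$ to be unitary but only as a twist by $|\det|^{t}$, $t\in\R$, of a unitary cuspidal representation, the same computation gives ${\sf w}=1-n+2t$, still independent of $v$ because $t$ is one global scalar.)

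After this reduction, the statement is literally Clozel's lemma, so there is no genuinely new mathematical content; the points that need care --- and the closest thing to an obstacle --- are bookkeeping: (a) checking that the $JL$-based definition of ``algebraic'' reproduces verbatim the hypothesis of Clozel's ``Lemme de puret\'e'', which it does by construction once one observes that $JL(\Pi')$ is a cuspidal (hence discrete series, hence unitary) automorphic representation of $GL_n(\A)$; and (b) keeping track of the normalization twist $|\cdot|_v^{\frac{1-n}{2}}$, which is exactly what converts the a priori half-integral weights of Clozel's convention into the honest integer ${\sf w}$ of the statement. If anything here is delicate, it is only this normalization, together with the standard fact that cuspidal automorphic representations of $GL_n$ are everywhere locally generic.
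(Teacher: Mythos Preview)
Your proposal is correct and takes essentially the same approach as the paper: the paper's entire proof reads ``This is an immediate consequence of the definition of $\Pi'$ being algebraic and Lem.\ \ref{lem:pureteG}'', i.e., exactly your reduction to Clozel's \emph{Lemme de puret\'e} applied to $\Pi=JL(\Pi')$. Your additional reproduction of Clozel's argument is extra detail the paper does not include (and is not required here), but it is sound once one tracks the possible global twist $|\det|^t$ as you note at the end; the only slight looseness is calling $\Pi_v$ ``unitary'' rather than ``essentially unitary'' before making that correction.
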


Again, we give an example that the cuspidality assumption on $JL(\Pi')$ cannot be removed from the statement of the lemma, see Ex. \ref{ex:purity}.

Next, we analyze the interplay of cohomological automorphic forms of $G'(\A)$ and the action of Aut$(\C)$. Following Waldspurger \cite{waldsp} and Clozel \cite{clozel}, for any representation $\nu$ of $G'(\A_f)$ and $\sigma\in$ Aut$(\C)$, there is the $\sigma$-twisted representation ${}^\sigma\!\nu:=\nu\otimes{}_{\sigma}\C$. In particular, this definition applies to the finite part $\Pi'_f$ of an automorphic representation $\Pi$ of $G'(\A)$. The action of $\sigma\in\textrm{Aut}(\C)$ on a finite-dimensional highest weight module $E_\mu$ of $G'_\infty$ is via its permutation action on the embeddings of $F$ into $\C$. Now, let
$$
S_{G'}= G'(F)\backslash G'(\A)/K'^\circ_\infty,
$$
where $K'^\circ_{\infty}$ is the topological connected component of the product of the center of $G'_\infty$ and a maximal compact subgroup of $G'_\infty$. A finite-dimensional highest weight module $E_\mu$ defines a sheaf $\mathcal E_\mu$ on $S_{G'}$. The corresponding sheaf cohomology $H^q(S_{G'},\mathcal E_\mu)$ can be computed using Betti-cohomology and hence for any $\sigma\in$ Aut$(\C)$, there is a $\sigma$-linear isomorphism
$$
\sigma^*: H^q(S_{G'},\mathcal E_\mu)\ira H^q(S_{G'},{}^\sigma\!\mathcal E_\mu).
$$
On the other hand, $H^q(S_{G'},\mathcal E_\mu)$ is isomorphic to the $(\g'_\infty,K'^\circ_\infty)$-cohomology of the space of automorphic forms on $G'(\A)$, which follows from Franke \cite{franke}, and inherits from that a decomposition of $G'(\A_f)$-modules
$$
H^q(S_{G'},\mathcal E_\mu) \cong\bigoplus_{\{P'\}}\bigoplus_{\varphi_{P'}} H_{\{P'\},\varphi_{P'}}^q(G',E_\mu),
$$
cf. \cite{schwfr} and \cite{moewal}. Here, the sums are ranging over the associate classes of parabolic $F$-subgroups $\{P'\}$ of $G'$ and attached associate classes of cuspidal automorphic representations $\varphi_{P'}$ of the corresponding Levi subgroup $L'$. In particular, the summand indexed by $\{G'\}$ gives the cohomology of the space of cuspidal automorphic forms of $G'(\A)$, which is usually called the cuspidal cohomology of $G'$.

We prove the following result, cf. Thm.\ \ref{thm:3.13'frankeschw}, which says that for regular highest weights, $\sigma^*$ respects this fine decomposition into cuspidal supports.

\begin{thm}
Let $E_\mu$ be a regular highest weight representation of $G'_\infty$ and $\sigma\in \textrm{\emph{Aut}}(\C)$. For each associate class of parabolic $F$-subgroups $\{P'\}$, and each associate class of cuspidal automorphic representations $\varphi_{P'}$, the summand $H^q_{\{P'\},\varphi_{P'}}(G',E_\mu)$ of
the global cohomology group $H^q(S_{G'},\mathcal E_\mu)$, is mapped by $\sigma^*$ isomorphically onto the summand $H^q_{\{P'\},{}^\sigma\!\varphi_{P'}}(G',{}^\sigma\!E_\mu)$ of $H^q(S_{G'},{}^\sigma\!\mathcal E_\mu)$ for a unique associate class ${}^\sigma\!\varphi_{P'}$:
\begin{displaymath}
\xymatrix{
H^q_{\{P'\},\varphi_{P'}}(G',E_\mu)\ar[rr]^\cong_{\sigma*} & & H^q_{\{P'\},{}^\sigma\!\varphi_{P'}}(G',{}^\sigma\!E_\mu).                   }
\end{displaymath}
If $H^q_{\{P'\},\varphi_{P'}}(G',E_\mu)\neq 0$ and a representative $P'$ in the associate class $\{P'\}$ has Levi factor $L'$ and $\varphi_{P'}$ is represented by a cuspidal automorphic representation $\Pi'$ of $L'(\A)$, then $\Pi'\otimes\rho_{P'}$ is cohomological. The $\sigma$-twist ${}^\sigma\Pi'_f\otimes {}^\sigma\!\rho_{P'_f}$ of its finite part is the finite part of a unique cohomological cuspidal automorphic representation $\Xi'$ and the associate class ${}^\sigma\!\varphi_{P'}$ is uniquely determined by the representation $\Xi'\otimes\rho_{P'}^{-1}$.
\end{thm}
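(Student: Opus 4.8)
The plan is to adapt the proof of the analogous statement for the split group $GL_n/F$ due to Franke--Schwermer \cite{schwfr}, supplying two ingredients tailored to $G'=GL_m/D$: the decomposition of $H^\bullet(S_{G'},\mathcal E_\mu)$ along cuspidal supports, which holds for an arbitrary reductive group and hence for $G'$; and the Clozel-type rationality statement that the $\mathrm{Aut}(\C)$-twist of a cohomological cuspidal automorphic representation of a group $GL_{m_i}/D$ is again one. The first thing I would record is that $\sigma^*$ is a $\sigma$-linear isomorphism commuting with the $G'(\A_f)$-action: for each open compact $K'_f$ the group $\mathrm{Aut}(\C)$ acts on the Betti cohomology $H^q(S_{G'}(K'_f),\mathcal E_\mu)$ solely through its permutation action on the number-field model of $E_\mu$, whereas the Hecke correspondences defining the $G'(\A_f)$-action are $\Q$-rational; passing to the limit over $K'_f$ gives the equivariance. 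Hence $\sigma^*$ sends $G'(\A_f)$-submodules to $G'(\A_f)$-submodules and matches irreducible constituents up to the functor $(\cdot)\mapsto{}^\sigma(\cdot)$.

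Next I would use regularity of $\mu$ to read off the shape of each summand. By the Franke--Schwermer analysis of Eisenstein cohomology with regular coefficients, $H^q_{\{P'\},\varphi_{P'}}(G',E_\mu)\neq 0$ forces any cuspidal automorphic representation $\Pi'$ of $L'(\A)$ representing $\varphi_{P'}$ to satisfy that $\Pi'\otimes\rho_{P'}$ is cohomological for the coefficient system on $L'$ attached to $\mu$ --- regularity excludes the genuinely residual contributions, so that, as a $G'(\A_f)$-module, the summand is a direct sum of subspaces of the fully induced modules $\mathrm{Ind}_{P'(\A_f)}^{G'(\A_f)}(\Pi'_f\otimes\rho_{P'_f})$, with $\Pi'$ running over the cuspidal data in the associate class. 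The decisive structural input is a rigidity statement: distinct pairs $(\{P'\},\varphi_{P'})$ give $G'(\A_f)$-modules with disjoint sets of irreducible constituents. For $GL_n/F$ this is classical (Jacquet-module considerations together with strong rigidity of isobaric representations); for $G'=GL_m/D$ it follows from the Zelevinsky-type classification for $GL_m/D$ together with the global Jacquet--Langlands transfer; equivalently, the cuspidal datum is recovered from the constant term of the associated Eisenstein series.

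Combining the two steps, $\sigma^*$ carries $H^q_{\{P'\},\varphi_{P'}}(G',E_\mu)$ equivariantly into $\bigoplus_{\{Q'\},\psi_{Q'}}H^q_{\{Q'\},\psi_{Q'}}(G',{}^\sigma\!E_\mu)$; since $\sigma$-twisting of a cuspidal datum does not change the ambient Levi, the image lies in the part indexed by the same associate class $\{P'\}$, and by the disjointness of constituents it lies in a single summand $H^q_{\{P'\},\psi_{P'}}(G',{}^\sigma\!E_\mu)$, which it exhausts because $\sigma^*$ is invertible. To identify $\psi_{P'}$ I would pass to the Levi: the cuspidal cohomology of $L'$ carries its own $\sigma^*$, and the Eisenstein-summation and constant-term maps relating the cohomology of $G'$ and of $L'$ are induced by $\Q$-rational cohomological correspondences, hence $\mathrm{Aut}(\C)$-equivariant; so $\sigma^*$ on the $G'$-summand is induced from $\sigma^*$ on the corresponding cuspidal summand of $L'$, and $\psi_{P'}$ is the cuspidal support of ${}^\sigma\Pi'_f\otimes{}^\sigma\!\rho_{P'_f}$. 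Writing $L'\cong\prod_i GL_{m_i}/D$ and $\Pi'\otimes\rho_{P'}\cong\bigotimes_i\Pi'_i$, the Clozel-type rationality statement for each factor $GL_{m_i}/D$ then shows that ${}^\sigma\Pi'_{i,f}$ is the finite part of a cohomological cuspidal automorphic representation of $GL_{m_i}/D$; reassembling the factors yields the cuspidal representation $\Xi'$, and the rigidity of the previous paragraph (with strong multiplicity one for $L'$) forces ${}^\sigma\!\varphi_{P'}$ to be, and to be uniquely, the associate class of $\Xi'\otimes\rho_{P'}^{-1}$.

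I expect the main obstacle to be this last step in the non-split setting: establishing (or, if it has been proved earlier in the paper, correctly invoking) that the $\mathrm{Aut}(\C)$-twist of a cohomological cuspidal automorphic representation of $GL_{m_i}/D$ is again cohomological and cuspidal. The subtlety is that the global Jacquet--Langlands map $JL$ is neither surjective nor manifestly compatible with $\sigma$-twists, so one must check that the local conditions at the places ramifying $D$ which cut out the image of $JL$ are stable under $\sigma$, and that $JL$ detects cuspidality for representations in its image. Here regularity of $\mu$ is what matters, since it forces each $\Pi'_{i,\infty}$ to be essentially tempered, hence $JL(\Pi'_i)$ cuspidal and, by the first Theorem of this paper, regular algebraic, which brings Clozel's theorem for $GL_{dm_i}/F$ into play. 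The remaining ingredients --- the geometric $\sigma$-equivariance of the Franke--Schwermer decomposition, the rigidity of cuspidal supports, and the cohomologicality of $\Pi'\otimes\rho_{P'}$ --- are, modulo bookkeeping, formal.
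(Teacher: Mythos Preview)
Your overall strategy is sound and close in spirit to Franke--Schwermer, but it diverges from the paper's proof in two substantive ways, and one of your anticipated difficulties dissolves once you use the paper's device.

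First, the paper does \emph{not} rely on the rigidity statement you invoke (that distinct pairs $(\{P'\},\varphi_{P'})$ contribute disjoint sets of irreducible $G'(\A_f)$-constituents to cohomology). Instead it works geometrically on the boundary stratum $\partial_{P'}S_{G'}=P'(F)\backslash G'(\A)/K'^\circ_\infty$. Using Franke's Theorem~19~II and Borel--Wallach III.3.3, the summand $H^q_{\{P'\},\varphi_{P'}}(G',E_\mu)$ is identified, for regular $\mu$, with a direct summand of $H^q(\g'_\infty,K'^\circ_\infty,\mathrm{Ind}_{P'(\A)}^{G'(\A)}[\Pi']\otimes E_\mu)$, and the latter is realised inside the sheaf cohomology $H^q(\partial_{P'}S_{G'},\mathcal E_\mu)$ via the standard description of boundary cohomology by Kostant representatives. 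Since $\sigma^*$ on $H^q(\partial_{P'}S_{G'},\mathcal E_\mu)$ is again a purely geometric (Betti) map, one gets a commutative diagram matching the $\Pi'$-piece to the $\Xi'\otimes\rho_{P'}^{-1}$-piece directly, with Strong Multiplicity One on $L'(\A)$ pinning down the target. Your rigidity route is plausible for $GL_m/D$ (via transfer to $GL_n$ and Jacquet--Shalika), but it is not what is actually used, and making it precise at ramified places of $D$ is extra work the paper avoids.

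Second, the ``Clozel-type rationality'' for each factor $GL_{m_i}/D$, which you flag as the main obstacle and propose to handle via $JL$ plus Clozel's theorem on $GL_{dm_i}$, is in fact proved earlier in the paper (Prop.~\ref{prop:cusptwist}) by a short, self-contained argument that does \emph{not} touch $JL$ at all: embed $\Pi'_f$ into interior cohomology $H^b_!(S_{L'},\mathcal E_{\mu_w})$, apply the geometric $\sigma^*_!$, land in $\bar H^b_{dis}$, and then use that regularity of $\mu_w$ forces any cohomological unitary representation at infinity to be essentially tempered (Vogan--Zuckerman), whence Wallach's theorem gives cuspidality of $\Xi'$. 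So your worry about $JL$ being neither surjective nor $\sigma$-equivariant is a non-issue for this theorem; the Jacquet--Langlands transfer enters only later, in Thm.~\ref{thm:3.13'}, where one wants the stronger conclusion that $JL(\Xi')$ is cuspidal.

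In short: your outline would work, but the paper's route via boundary cohomology and Prop.~\ref{prop:cusptwist} is both cleaner and sidesteps the two points (rigidity of cuspidal supports for $G'$; $\sigma$-compatibility of $JL$) that you correctly identify as needing justification in your approach.
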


Under the assumption of the regularity of $E_\mu$, this theorem is a generalization of the analogous result of Franke--Schwermer \cite{schwfr} in the split case, $D=F$. In this setup, it is also a simultaneous generalization of Franke's \cite[Theorem 20]{franke} on the compatibility of $\sigma^*$ with the $\{P\}$-decomposition and Clozel's \cite[Th\'eor\`em 3.13]{clozel}, which states that the $\sigma$-twist ${}^\sigma\Pi_f$ of the finite part of a cohomological cuspidal automorphic representations $\Pi$ of $G(\A)$ is the finite part of a cohomological, cuspidal automorphic representation.

If we consider the action of ${\rm Aut}(\C)$ on a regular algebraic representation and ask whether it is compatible with
the global Jacquet-Langlands transfer $JL$, then we obtain the following result, see Thm.\ \ref{thm:3.13'}:

\begin{thm}
Let $\Pi'$ be a regular algebraic, cuspidal automorphic representation of $G'(\A)$ and assume that $JL(\Pi')$ is cuspidal. For all $\sigma\in \textrm{\emph{Aut}}(\C)$, there is a unique $\sigma$-twisted representation ${}^\sigma\Pi'$ of $G'(\A)$, which is regular algebraic and such that $JL({}^\sigma\Pi')$ is cuspidal. The action of $\textrm{\emph{Aut}}(\C)$ commutes with taking the global Jacquet-Langlands transfer, i.e., ${}^\sigma\!JL(\Pi')=JL({}^\sigma\Pi')$ for all $\sigma\in \textrm{\emph{Aut}}(\C)$.
\end{thm}

In the final section we also prove some arithmetic results on the rationality field
$$\Q(\Pi'_f)=\{z\in\C \ | \ \sigma(z)=z\quad\forall\sigma\in\textrm{Aut}(\C)\textrm{ for which } {}^\sigma\Pi'_f\cong\Pi'_f\}$$
of the finite part of a cuspidal automorphic representation $\Pi'$ of $G'(\A)$ which is cohomological. At infinity, define $\Q(\mu)$ to be a minimal extension of the fixed-field in $\C$ of those $\sigma\in$ Aut$(\C)$ which fix a highest weight representation $E_\mu$ with respect to which $\Pi'_\infty$ has non-zero $(\g'_\infty,K'^\circ_\infty)$-cohomology, minimal such that $D$ splits over $\Q(\mu)$. Now, let the field $\Q(\Pi')$ be the compositum of $\Q(\mu)$ and $\Q(\Pi'_f)$. This is a generalization of the analogous notation used in Raghuram--Shahidi, cf.\ \cite{raghuram-shahidi-imrn}. The following theorem, contained in Thm.\ \ref{thm:Q(pi_f)numberfield}, Thm.\ \ref{prop:cuspratio} and Prop.\ \ref{prop:Q=QJL}, generalizes the analogous results in the split case as well as Waldspurger's corresponding theorems for $d=2$ and $m=1$.

\begin{thm}
Let $\Pi'$ be a cuspidal and cohomological representation of $G'(\A)$. Then $\Q(\Pi')$ is a number field and $\Pi'_f$ admits a $G'(\A_f)$-invariant $\Q(\Pi')$-structure. In particular, $\Pi'_f$ is defined over a number field. Furthermore, if $\Pi'$ is regular algebraic and $JL(\Pi')$ is cuspidal, then there is the equality of fields
$$\Q(\Pi'_f)=\Q(JL(\Pi')_f).$$
\end{thm}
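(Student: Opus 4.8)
The plan is to split the statement into its three constituent assertions --- that $\Q(\Pi')$ is a number field, that $\Pi'_f$ carries a $G'(\A_f)$-invariant $\Q(\Pi')$-rational structure, and that $\Q(\Pi'_f)=\Q(JL(\Pi'_f))$ --- and to deduce each from the cohomological theorems above together with the local and global properties of $JL$. Throughout I will use that, since $JL(\Pi')$ is cuspidal, $\Pi'$ is itself cuspidal (otherwise $\Pi'$ would be properly residual and $JL(\Pi')$ a non-cuspidal Speh representation, by Badulescu--Renard \cite{ioanrenard}); that $\Pi'_\infty$ and $JL(\Pi')_\infty$ are cohomological (Thm.\ \ref{thm:regalgG'} and the archimedean compatibility of $JL$ established earlier); that $JL$ is injective on the discrete spectrum and is the identity at every finite place where $D$ splits; and strong multiplicity one for $GL_n$ and, via \cite{ioan}, for $G'$.

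For the first assertion: $\Q(\mu)$ is a number field because the isomorphism class of $E_\mu$ depends only on the permutation action of $\mathrm{Aut}(\C)$ on $\mathrm{Hom}(F,\C)$, so $\Q(\mu)$ lies in the Galois closure of $F/\Q$. For $\Q(\Pi'_f)$, observe that $\Pi'_f$ occurs in the $\{G'\}$-summand (the cuspidal cohomology) of $H^q(S_{G'},\mathcal E_\mu)$ for suitable $q$; by Thm.\ \ref{thm:3.13'frankeschw}, $\sigma^*$ carries this summand isomorphically onto the ${}^\sigma\!\varphi_{G'}$-summand of $H^q(S_{G'},\mathcal E_{\sigma\mu})$, and $\sigma\mu$ ranges only over the finite $\mathrm{Aut}(\C)$-orbit of $\mu$. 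Since each $H^q(S_{G'},\mathcal E_{\mu'})$ is finite-dimensional, only finitely many isomorphism classes of irreducible $G'(\A_f)$-modules occur among the ${}^\sigma\Pi'_f$, so the orbit of $\Pi'_f$ is finite; because the Hecke operators act $\Q(\mu)$-rationally with algebraic eigenvalues, a standard argument (as in Clozel \cite{clozel}) then gives $\mathrm{Stab}(\Pi'_f)=\mathrm{Aut}(\C/\Q(\Pi'_f))$ with $\Q(\Pi'_f)$ a number field. Hence $\Q(\Pi')=\Q(\mu)\Q(\Pi'_f)$ is a number field and $\mathrm{Aut}(\C/\Q(\Pi'))=\mathrm{Aut}(\C/\Q(\mu))\cap\mathrm{Stab}(\Pi'_f)$.

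For the rational structure I would descend from cohomology. Betti cohomology supplies a $G'(\A_f)$-stable $\Q(\mu)$-form of $H^q(S_{G'},\mathcal E_\mu)$ on which $\mathrm{Aut}(\C/\Q(\mu))$ acts semilinearly; by the previous step the $\Pi'_f$-isotypic subspace is stable under $\mathrm{Aut}(\C/\Q(\Pi'))$, so Galois descent over the finite extension $\Q(\Pi')/\Q(\mu)$ yields a $\Q(\Pi')[G'(\A_f)]$-module whose scalar extension to $\C$ is $\Pi'_f$-isotypic. To pass from this to a model of $\Pi'_f$ itself one invokes the local rationality of irreducible admissible representations: at finite places where $D$ splits this is Waldspurger's theorem \cite{waldsp} for $GL_n(F_v)$, and at the finitely many ramified finite places one needs the analogue for $GL_{m_v}(D_v)$, which can be transported from the split case via the local Jacquet--Langlands character identities; a multiplicity-one argument then removes the potential Brauer obstruction and produces the asserted $G'(\A_f)$-invariant $\Q(\Pi')$-structure, which is in particular a number-field model.

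For the equality $\Q(\Pi'_f)=\Q(JL(\Pi'_f))$ I would compare stabilizers place by place. At all but finitely many finite $v$ the algebra $D$ splits and both $\Pi'_v$ and $JL(\Pi')_v$ are unramified with the same Satake parameter, so for such $v$ one has ${}^\sigma\Pi'_v\cong\Pi'_v$ if and only if ${}^\sigma JL(\Pi')_v\cong JL(\Pi')_v$. By Thm.\ \ref{thm:3.13'} and by Clozel's \cite[Th\'eor\`eme~3.13]{clozel}, ${}^\sigma\Pi'_f$ and ${}^\sigma JL(\Pi')_f$ are the finite parts of cuspidal automorphic representations $\Xi'$ of $G'(\A)$ and $\Psi$ of $GL_n(\A)$; strong multiplicity one then promotes the above local equivalences at almost all places to global identities $\Xi'\cong\Pi'$, $\Psi\cong JL(\Pi')$, whence (using that $JL$ is the identity at split places and injective on the discrete spectrum) $JL(\Xi')\cong\Psi$, giving ${}^\sigma\Pi'_f\cong\Pi'_f\iff{}^\sigma JL(\Pi')_f\cong JL(\Pi')_f$. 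Thus $\mathrm{Stab}(\Pi'_f)=\mathrm{Stab}(JL(\Pi')_f)$ and the fixed fields coincide. I expect the main obstacle to lie in the third paragraph: establishing local rationality at the ramified finite places and effecting a clean descent from the isotypic cohomological model to a model of $\Pi'_f$ itself; the number-field statement and the comparison of rationality fields should follow fairly directly from the cohomological theorems and strong multiplicity one already available.
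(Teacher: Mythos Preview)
Your argument for the number-field assertion and for $\Q(\Pi'_f)=\Q(JL(\Pi'_f))$ is broadly on the right track, but note a small slip: you invoke Thm.~\ref{thm:3.13'frankeschw}, which carries the hypothesis that $E_\mu$ is \emph{regular}, whereas ``regular algebraic'' only forces the infinitesimal character to be regular, not the highest weight itself (e.g., $\mu=0$ is allowed). The paper uses Thm.~\ref{thm:3.13'} instead, which needs no regularity of $\mu$; the finiteness-of-orbit argument then goes through exactly as you sketch.

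The genuine divergence is in constructing the $\Q(\Pi')$-structure on $\Pi'_f$. You propose Galois descent on the $\Pi'_f$-isotypic part of cohomology, followed by a local rationality statement at the finitely many ramified finite places to split off a single copy of $\Pi'_f$; you correctly flag this local step as the obstacle, and indeed transporting Waldspurger's local result through the local Jacquet--Langlands character identities is not routine. The paper sidesteps this entirely. Its key intermediate lemma (Prop.~\ref{prop:fieldcomp}, resting on Lem.~\ref{lem:Q(pi_v) als Q(...)}) is that $\Q(\Pi'_f)$ is the compositum of the local fields $\Q(\Pi'_v)$ for $v$ ranging over the \emph{unramified split} places, and that each such $\Q(\Pi'_v)$ is explicitly $\Q\bigl(f_1(\alpha^{\Pi'_v}),\dots,f_n(\alpha^{\Pi'_v})\bigr)$, the field generated by the elementary symmetric functions of the (normalized) Satake parameters. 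Granting this, one works in the lowest cohomological degree $b$ and fixes a character $\epsilon$ of $K'_\infty/K'^{\circ}_\infty$ so that $H^b(\g'_\infty,K'^{\circ}_\infty,\Pi'_\infty\otimes E_\mu)[\epsilon]$ is one-dimensional; then inside the $\Q(\Pi')$-form of $H^b_!(S_{G'},\mathcal E_\mu)[\epsilon]^{K'_S}$ the simultaneous Hecke eigenspace for the eigenvalues $f_j(\alpha^{\Pi'_v})$ is already defined over $\Q(\Pi')$ and, after base change to $\C$, equals $\Pi'^{K'_S}_f$ by multiplicity one. Acting by the full $\Q(\Pi')$-valued Hecke algebra of $G'(\A_f)$ on this eigenspace produces a $\Q(\Pi')$-form of $\Pi'_f$ with no reference to the ramified places at all.

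The same compositum lemma also gives the equality $\Q(\Pi'_f)=\Q(JL(\Pi')_f)$ in one line: at every unramified split $v$ one has $\Pi'_v\cong JL(\Pi')_v$, hence $\Q(\Pi'_v)=\Q(JL(\Pi')_v)$, and the global fields are the respective composita. Your stabilizer-comparison argument via Thm.~\ref{thm:3.13'} and strong multiplicity one is correct, but the paper's route is shorter precisely because Prop.~\ref{prop:fieldcomp} has already localized the rationality field to the unramified split places.
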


%
%
In view of the results of this paper, we may generalize Clozel's \cite[Conjectures 3.7 and 3.8]{clozel} as:

\begin{conj}
Let $\Pi'$ be a cuspidal automorphic representation of $G'(\A)$. Then the following are equivalent:
\begin{enumerate}
\item[(i)] $\Pi'_f$ is defined over a number field.
\item[(ii)] $\Pi'$ is algebraic.
\end{enumerate}
\end{conj}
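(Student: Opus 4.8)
The plan is to transport the question to the split form $G=GL_n/F$ by means of the global Jacquet--Langlands correspondence of \cite{ioan,ioanrenard}, and then to recognise that the remaining content is precisely Clozel's conjecture in the split case. Put $\Pi=JL(\Pi')$, a cuspidal automorphic representation of $G(\A)=GL_n(\A_F)$ by hypothesis. By the definition of ``algebraic'' adopted in this paper, condition (ii) for $\Pi'$ is literally the statement that $\Pi$ is algebraic, so the two equivalences have the same right-hand side; it therefore suffices to prove that $\Pi'_f$ is defined over a number field if and only if $\Pi_f$ is.

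The first step is to show, with no regularity or cohomological hypothesis, that the finite part of $JL$ is equivariant for the action of $\mathrm{Aut}(\C)$, that is $JL({}^\sigma\Pi')_f\cong{}^\sigma(\Pi_f)$ for every $\sigma$. This holds because $JL_f$ is a restricted tensor product $\bigotimes'_v JL_v$ in which $JL_v$ is the identity at the finite places where $D$ splits, while at the remaining finite places $JL_v$ is assembled from the classification of essentially square-integrable representations, parabolic induction, and Langlands quotients, each of which commutes with $\sigma$-twisting. Since moreover $JL_v$ is injective on isomorphism classes of irreducible admissible representations, the assignment $\Pi'\mapsto JL(\Pi')_f$ is injective on the relevant isomorphism classes, and we conclude
$$
{}^\sigma\Pi'_f\cong\Pi'_f\ \Longleftrightarrow\ {}^\sigma\Pi_f\cong\Pi_f,
$$
whence the equality of rationality fields $\Q(\Pi'_f)=\Q(\Pi_f)$. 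This is the unconditional counterpart of Prop.~\ref{prop:Q=QJL} that the argument needs.

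The second step is the descent principle: an irreducible admissible representation of $G'(\A_f)$, respectively of $G(\A_f)$, is defined over a number field exactly when its field of rationality is a number field, the model over $\Q(\cdot_f)$ being constructed as in the split case (Waldspurger, Clozel). Combining the two steps, ``$\Pi'_f$ is defined over a number field'' is equivalent to ``$\Q(\Pi_f)$ is a number field'', i.e.\ to ``$\Pi_f$ is defined over a number field'', and the conjecture for $\Pi'$ has been reduced to \cite[Conjectures 3.7 and 3.8]{clozel} for the cuspidal representation $\Pi$ of $GL_n(\A_F)$.

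This final reduction is also the main obstacle, since Clozel's split-case conjecture is itself open: the delicate case is that of cuspidal representations which are algebraic but not cohomological (already for $n=2$, the circle of questions surrounding algebraic Maass forms). What the present paper does settle unconditionally is the sub-case in which $\Pi'$ is moreover regular and essentially tempered, where the implication (ii)$\Rightarrow$(i) follows by feeding Thm.~\ref{thm:regalgG'} --- which makes $\Pi'_\infty$ cohomological and essentially tempered --- into Thm.~\ref{thm:Q(pi_f)numberfield} and Thm.~\ref{prop:cuspratio}, while purity (Lem.~\ref{lem:pureteG'}) governs the archimedean parameters in the converse direction. In short, the part of the conjecture lying beyond this paper is exactly the open part of Clozel's conjecture in the split case.
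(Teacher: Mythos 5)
The statement you are addressing is stated in the paper as a \emph{conjecture} (a generalization of Clozel's Conjectures 3.7 and 3.8), and the paper offers no proof of it; so there is no argument of the authors to compare yours against. Your text, to its credit, is candid that it does not prove the statement either: it is a proposed reduction to the split-case conjecture of Clozel, which is open. As a review, then, the main point is that what you have written cannot be accepted as a proof, and moreover the two steps of the reduction itself are not established at the level of generality you claim.

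Concretely: (a) the ``unconditional counterpart of Prop.~\ref{prop:Q=QJL}'', i.e.\ $\Q(\Pi'_f)=\Q(JL(\Pi')_f)$ for an arbitrary discrete series $\Pi'$ with cuspidal transfer, is not available. The paper proves this equality only for regular algebraic $\Pi'$, and the proof is not a formal local computation: it goes through Prop.~\ref{prop:fieldcomp}, which expresses $\Q(\Pi'_f)$ as the compositum of the local fields $\Q(\Pi'_v)$ at the \emph{unramified} places only, and that proposition in turn rests on Thm.~\ref{thm:3.13'} (Aut$(\C)$-stability of the class of regular algebraic representations with cuspidal transfer), which is proved cohomologically. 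This detour is what lets one avoid the non-split finite places, where $|LJ|_v$ is neither injective nor obviously Aut$(\C)$-equivariant; your claim that ``each of which commutes with $\sigma$-twisting'' at those places is asserted, not proved, and the injectivity of $JL_v$ on the relevant isomorphism classes is likewise not something you can quote. (b) Your ``descent principle'' --- that an irreducible admissible representation of $G'(\A_f)$ is defined over a number field as soon as its rationality field is one --- is not a formal fact; in this paper the model over $\Q(\Pi')$ is constructed in Thm.~\ref{prop:cuspratio} precisely by exhibiting $\Pi'_f$ inside the $\Q(\mu)$-rational interior cohomology of $S_{G'}$ and cutting out a Hecke eigenspace, which again requires $\Pi'_\infty$ to be cohomological. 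For merely algebraic (non-cohomological) $\Pi'$ no such geometric realization is known, and this is exactly where the difficulty of Clozel's conjecture lies. Your final paragraph identifies the settled sub-case correctly, but the body of the argument should not present steps (a) and (b) as established.
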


Beside the above theorems on the arithmetic properties of automorphic forms of $G'(\A),$
we also prove in Sections \ref{sect:irrunitcoh} and \ref{sect:local-global-jl} a number of purely representation-theoretical results on
$GL_k(\H)$ and $GL_n(\R)$, $n=2k$. To begin, we describe an explicit classification of the cohomological irreducible, unitary dual of
$GL_k(\H)$ and $GL_{n}(\R)$, $n=2k\geq 2$, following Vogan--Zuckerman \cite{vozu}. This is contained in Thm.\ \ref{prop:irrunitG'} and Thm.\ \ref{prop:irrunitG} which describe the set of all cohomological, irreducible, unitary $A_{\q}(\lambda)$-modules of $GL_k(\H)$ and $GL_{n}(\R)$ with respect to any finite-dimensional coefficient system $E_\mu$ very concretely in terms of certain ordered partitions of $k$ and $n$. The weight $\lambda$ depends on the weight $\mu$ as in Definition \ref{def:lambda-mu} below. (This classification is also in accordance with the results of Speh, cf. \cite{speh}.) Having parameterized the cohomological irreducible, unitary dual Coh$_\mu(GL_k(\H))$ of $GL_k(\H)$ and Coh$_\mu(GL_n(\R))$ of $GL_{n}(\R)$ with respect to $E_\mu$ by such partitions, we then prove the following theorem, cf. Thm.\ \ref{thm:irrunitJL}.

\begin{thm}
At a non-split archimedean place $v$ of $F$, the local Jacquet-Langlands transfer $|LJ|_v$, constructed by Badulescu--Renard, defines a surjective map
$$\xymatrix{\textrm{\emph{Coh}}_{\mu}(GL_n(\R))\ar@{->>}[rr]^{|LJ|_v}  & & \textrm{\emph{Coh}}_{\mu}(GL_k(\H))}$$
given explicitly by
$$|LJ|_v(A_{\q_{\underline n}}(\lambda)\otimes\sgn^\varepsilon)=A_{\q'_{\underline k}}(\lambda).$$
Here $\underline n$ (resp., $\underline k$) stands for the partition $n=\sum_{i=0}^rn_i$ (resp., $k=\sum_{i=0}^rk_i$) with $n_i=2k_i$, $0\leq i\leq r$ and $k_i>0$ for $1\leq i\leq r$.

Moreover, $|LJ|_v$ maps tempered cohomological representations to tempered cohomological representations.
\end{thm}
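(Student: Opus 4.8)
The plan is to reduce the theorem to an explicit comparison of the two parameterizations established in Theorems~\ref{prop:irrunitG} and~\ref{prop:irrunitG'}, namely the descriptions of $\textrm{Coh}_\mu(GL_n(\R))$ and $\textrm{Coh}_\mu(GL_k(\H))$ in terms of ordered partitions, together with the known formula for the Badulescu--Renard local transfer $|LJ|_v$ on the relevant building blocks. First I would recall that the cohomological $A_\q(\lambda)$-modules on $GL_n(\R)$ arising here are, up to a sign twist, the Speh-type representations induced (via cohomological parabolic induction) from a $\theta$-stable parabolic $\q_{\underline n}$ attached to an ordered partition $n=\sum_{i=0}^r n_i$; by the classification cited, non-vanishing of $(\g_\infty,K_\infty^\circ)$-cohomology with respect to $E_\mu$ forces all the ``even'' blocks $n_i$ for $i\ge 1$ to be of the form $n_i = 2k_i$, while $n_0$ may be $0$ or $1$ depending on the parity of $n$ — but since $n = 2k$ is even here, $n_0 \in \{0,2\}$ after absorbing, i.e.\ one may take $n_0=0$. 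Dually, $\textrm{Coh}_\mu(GL_k(\H))$ is parameterized by ordered partitions $k = \sum_{i=0}^r k_i$ with $k_i>0$ for $i\ge 1$, via the $\theta$-stable parabolic $\q'_{\underline k}$ in $\gl_k(\H)$. The map in the statement is then the obvious ``halving'' map on partitions, $\underline n \leftrightarrow \underline k$ with $n_i = 2k_i$.

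Next I would establish the displayed formula $|LJ|_v(A_{\q_{\underline n}}(\lambda)\otimes\sgn^\varepsilon) = A_{\q'_{\underline k}}(\lambda)$. The key input is that the Badulescu--Renard transfer is characterized (up to sign) by a character identity on the relevant regular elliptic / elliptic set, and on the essentially square-integrable building blocks it sends the discrete series (Speh) representation of $GL_{2k_i}(\R)$ with the appropriate infinitesimal character to the corresponding representation of $GL_{k_i}(\H)$ — concretely, $|LJ|_v$ intertwines the two cohomological inductions because cohomological parabolic induction (in the weakly-fair range, which holds here by the regularity of $\mu$) is compatible with parabolic induction of the underlying ($\theta$-stable Levi) data, and $JL$ is compatible with ordinary parabolic induction. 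So one writes $A_{\q_{\underline n}}(\lambda)$ as a Langlands quotient of a representation induced from $\prod_i (\text{Speh block on } GL_{2k_i}(\R))$, applies the known $GL_{2k_i}(\R) \to GL_{k_i}(\H)$ transfer block by block, and reassembles to get $A_{\q'_{\underline k}}(\lambda)$; the sign $\sgn^\varepsilon$ is exactly the ambiguity that $|LJ|$ kills (it is trivial on $GL_k(\H)$, which has no $\sgn$ character for the relevant component group), which is why the fibers are nontrivial and the map is only surjective, not bijective.

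Surjectivity is then immediate: every ordered partition $\underline k$ of $k$ with $k_i>0$ ($i\ge1$) lifts to the ordered partition $\underline n$ with $n_i = 2k_i$, and $n_0 := 0$, which lies in the domain, so every $A_{\q'_{\underline k}}(\lambda) \in \textrm{Coh}_\mu(GL_k(\H))$ is hit. For the last sentence, I would recall from the explicit classification that the tempered members of $\textrm{Coh}_\mu(GL_n(\R))$ are precisely those with the coarsest admissible partition — i.e.\ the one with the maximal number of parts, so that each block is as small as it can be (each $n_i \in \{1,2\}$, giving a discrete series or limit-of-discrete-series factor) — and correspondingly the tempered members of $\textrm{Coh}_\mu(GL_k(\H))$ are those whose partition has all $k_i = 1$; under the halving correspondence $n_i=2k_i$, the all-$k_i=1$ partition of $k$ corresponds to the all-$n_i=2$ partition of $n=2k$, which is exactly the tempered one on the $GL_n(\R)$ side. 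Hence $|LJ|_v$ carries tempered-cohomological to tempered-cohomological.

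**Main obstacle.** The step I expect to require the most care is pinning down the normalization of $|LJ|_v$ on the Speh building blocks and tracking the sign-character ambiguity: Badulescu--Renard define the transfer only up to sign via a twisted character identity, and one must verify that with the natural choice the cohomological $A_\q(\lambda)$ on $GL_{2k_i}(\R)$ (rather than a sign-twist thereof, or a different component of the same infinitesimal character packet) is what actually transfers to $A_{\q'_{k_i}}(\lambda)$ on $GL_{k_i}(\H)$; equivalently, checking the compatibility of cohomological parabolic induction with the local transfer in the weakly-fair range is the technical heart, and this is where one genuinely uses that $\lambda$ (hence $\mu$) is as in Definition~\ref{def:lambda-mu} and sufficiently regular. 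Everything else is bookkeeping with ordered partitions.
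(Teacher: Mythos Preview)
Your overall strategy---realize each $A_\q(\lambda)$ as a Langlands quotient, use that $|LJ|_v$ commutes with parabolic induction and tensor products, and then match inducing data---is exactly the strategy of the paper. But the level at which you propose to carry out the comparison differs from the paper's, and this difference is where your sketch develops real gaps.

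The paper does not stop at an intermediate ``Speh-block'' decomposition with Levi $\prod_i GL_{2k_i}(\R)$. Instead it pushes all the way down, via Vogan--Zuckerman \cite{vozu}, Thm.~6.16, to the Langlands data with Levi $M^dA^d \cong \prod_{j=1}^{k-k_0} GL_2(\R) \times \prod_{j=1}^{n_0} GL_1(\R)$ on the split side and $\prod_{j=1}^{k} GL_1(\H)$ on the non-split side. At that level the inducing representation $\sigma^d$ is an explicit tensor product of discrete series $D(0,\ell_j)$ (resp.\ finite-dimensional $F(0,\ell_j)$) together with a sign character, the parameter $\nu^d$ is $(\rho_{\gl_{n_0}(\R)},\rho_{\gl_{k_1}(\C)},\dots)$ (resp.\ the analogue with $\gl_{k_0}(\H)$), and the only transfer input needed is the elementary Lemma~\ref{lem:localJL}: $|LJ|_v(D(u,l))=F(u,l)$ and $|LJ|_v(\sgn^\varepsilon|\det|^s)=\det'^s$. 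Induction by stages then gives $|LJ|_v(J(P,\sigma_{\lambda,\underline n},\nu_{\underline n})) = J(P',\sigma'_{\lambda,\underline k},\nu'_{\underline k})$, i.e.\ the stated formula. No weakly-fair hypothesis or regularity of $\mu$ is used, nor any block-by-block transfer for $GL_{2k_i}(\R)\to GL_{k_i}(\H)$ with $k_i>1$---which you would have to prove separately and which is essentially the theorem itself for smaller $k$.

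Two concrete errors in your sketch: (i) your discussion of $n_0$ is wrong---$n_0$ is not restricted to $\{0,2\}$, it can be any even integer $0\le n_0\le n$ (cf.\ Lemma~\ref{lem:g1}), and different $n_0$ give genuinely different representations; you cannot ``absorb'' it to~$0$. (ii) The theorem is stated for \emph{any} highest weight $\mu$, so an argument that relies on regularity of $\mu$ (to land in the weakly-fair range) does not prove the statement as written. The paper's route via Thm.~6.16 of \cite{vozu} avoids both issues by computing the Langlands data explicitly in terms of $\lambda$ and $\rho(\u')$, with no regularity assumption. Your identification of the ``main obstacle'' (sign normalization of $|LJ|_v$) is also off the mark: once one works at the $GL_2(\R)\leftrightarrow GL_1(\H)$ level, Lemma~\ref{lem:localJL} disposes of this completely.
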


Using the parametrization by ordered partitions, it is an easy exercise to determine the fibers of $|LJ|_v$ in Coh$_\mu(GL_n(\R))$ over a given $A_{\q'}(\lambda)$-module.
This theorem fits very well with the interplay of cohomological automorphic representations with Langlands functoriality as discussed in
Raghuram--Shahidi  \cite[Section 5.2]{ragsha}.

\medskip

{\small
\noindent{\it Acknowledgements:}
A.R. thanks David Vogan for some email correspondence in 2003 concerning representations of ${\rm GL}_m(D)$ with cohomology which was the genesis of this project. However, at that time the generalized Jacquet-Langlands correspondence was still not proved and one had to wait for Badulescu's theorems.
This project really got started when both the authors met at the Erwin Schr\"odinger Institute (ESI) in Vienna in February 2009. Both H.G. and A.R. thank the
ESI, and also the Max-Planck Institut f\"ur Mathematik for their hospitality. H.G. also thanks the Department of Mathematics of the Oklahoma State University, where much of this work was done, and the Institut de Math\'ematiques de Jussieu.}

\section{The general linear group and its inner forms}
\subsection{Generalities on division algebras}\label{sect:gendivalg}
Let $F$ be a number field whose set of all places is denoted $V=V_\infty\cup V_f$, where as usual $V_\infty$ is the subset of archimedean places and $V_f$ the subset of non-archimedean places. The local completion of $F$ at a place $v\in V$ is written $F_v$.

Let $D$ be a central division-algebra over $F$ of index $d$, i.e., $d^2=\dim_F D$. The local algebras $D_v=D\otimes_F F_v$ are central simple algebras over $F_v$ and hence isomorphic to a matrix algebra $M_{r_v}(A_v)$, for some integer $r_v\geq 1$ and a central division algebra $A_v$ over $F_v$. The algebra $D$ is said to be split at $v$ if $A_v=F_v$ and non-split at $v$ otherwise, i.e., $A_v$ is not a field. The set of non-split places is finite. Analogous to the global situation, let $d_v$ be the index of $D_v$, i.e., $d_v^2=\dim_{F_v}A_v$. Then $r_v d_v=d$ for all $v$. If $v\in V_\infty$ is real then $d_v\in\{1,2\}$, i.e., $A_v=\R$ or $\H$ and $D_v=M_d(\R)$ if $v$ is split and $M_{d/2}(\H)$ is $v$ is non-split (in which case $d$ is even). Given any $m\geq 1$ we set $n:=dm$ and $k:=n/2$.

\subsection{The groups $G'$ and $G$}\label{sect:groups}
The determinant
$\det'$ of an $m\times m$-matrix $X\in M_m(D)$, $m\geq 1$, is
the generalization of the reduced norm to matrices:
$\det'(X):=\det(\varphi(X\otimes 1))$, for some isomorphism
$\varphi: M_m(D)\otimes_F\overline\Q\ira M_{n}(\overline\Q)$. It
is independent of $\varphi$ and is an $F$-rational polynomial in the coordinates of the entries of $X$. So the
group
$$G'(F)=\{X\in M_m(D)|\textrm{det}'(X)\neq 0\}$$
defines an algebraic group $GL_m'$ over $F$. It is
reductive and is an inner $F$-form of the split group $G:=GL_n/F$.
At a real place $v\in V_\infty$ we hence obtain $G'(\R)=GL_n(\R)$ if $v$ is split and $G'(\R)=GL_k(\H)$ if $v$ is not split. We use the notation $G'_v:=G'(F_v)$ and $G_v:=G(F_v)$ for $v\in V$ and set as usual $G'_\infty:=\prod_{v\in V_\infty} G'_v$, resp., $G_\infty:=\prod_{v\in V_\infty} G_v$. Lie algebras of Lie groups are denoted by the same but gothic letter, e.g. $\g_v=Lie(G_v)$, $\g_\infty=Lie(G_\infty)$.

\subsection{Finite-dimensional representations}\label{sect:highweights}
We fix once and for all a maximal $F$-split torus $T$ in $G$. The group $T_\infty=\prod_{v\in V_\infty} T_v=\prod_{v\in V_\infty} T(F_v)$ is then a Cartan subgroup of $G_\infty$. Fixing the set of dominant algebraic characters $X^+(T_\infty)$ of $T_\infty$ in the usual way, gives us that a tuple $\mu=(\mu_v)_{v\in V_\infty}\in X^+(T_\infty)$ can be identified with an equivalence class of irreducible, algebraic, finite-dimensional representations $E_\mu$ of $G_\infty$ (on complex vector spaces) via the highest weight correspondence. It is clear that any such representation $E_\mu$ factors into irreducible representations $E_\mu=\bigotimes_{v\in V_\infty} E_{\mu_v}$, where $E_{\mu_v}$ is the irreducible representation of $G_v$ of highest weight $\mu_v$. If $v$ is real, then $\mu_v=(\mu_{v,1},...,\mu_{v,n})$ with $\mu_{v,1}\geq ...\geq\mu_{v,n}$ and $\mu_{v,i}\in\Z$, for $1\leq i\leq n$; and if $v$ is complex, corresponding to the complex embeddings $\{\iota_v,\bar\iota_v\}$ of $F$, then $\mu_v$ is given by a pair $(\mu_{\iota_v},\mu_{\bar\iota_v})$ of $n$-tuples of the above form. A representation $E_\mu$ is called {\it essentially self-dual} if all its local factors $E_{\mu_{v}}$ are, i.e., if for all $v\in V_\infty$ there is a $w_v\in\Z$ such that $E_{\mu_v}\cong E^{\sf v}_{\mu_v}\otimes\det^{w_v}$. At a real place this reads as
$$\mu_{v,i}+\mu_{v,n-i+1}=w_v, \quad\quad 1\leq i\leq n$$
and at a complex place this means
$$\mu_{\bar\iota_v,i}+\mu_{\iota_v,n-i+1}=w_v, \quad\quad 1\leq i\leq n.$$
It is called {\it self-dual} if $w_v=0$, i.e., $E_{\mu_v}\cong E^{\sf v}_{\mu_v}$.

As $G'_\infty$ is a real inner form of $G_\infty$ the notion of highest weights and irreducible finite-dimensional representations is defined via the passage to the split form $G_\infty$. That means that we say at a non-split place $v\in V_\infty$ an irreducible finite-dimensional representation $E'_v$ of $G'_v$ is {\it of highest weight} $\mu'_v$, if the complexified representation $E_v$ of $\g'_v\otimes\C=\mathfrak{gl}_n(\C)$ is. We hence drop the prime for such representations and write simply $E'_v=E_{\mu_v}$. Then, everything said above on representations of $G_\infty$ also applies to irreducible finite-dimensional complex representations $E_\mu=\bigotimes_{v\in V_\infty} E_{\mu_v}$ of $G'_\infty$ without changes (only adopting the notation of $\det$ to $\det'$). A highest weight representation $E_\mu$ is called {\it regular}, if $\mu$ lies in the interior of the dominant Weyl chamber of $G'_\infty$. The smallest algebraically integral element in the interior of the dominant Weyl chamber of $G'_\infty$ and $G_\infty$ is given by $\rho=(\rho_v)_{v\in V_\infty}$ with $\rho_v=(\frac{n-1}{2},\frac{n-3}{2},...,-\frac{n-1}{2})$ for all real places $v\in V_\infty$ (resp. the pair $\rho_v=(\rho_{\iota_v},\rho_{\bar\iota_v})$, $\rho_{\iota_v}=\rho_{\bar\iota_v}=(\frac{n-1}{2},\frac{n-3}{2},...,-\frac{n-1}{2})$, if $v$ is complex).

\subsection{$(\g_\infty',K'^\circ_\infty)$-cohomology}\label{sect:gKcoh}
Let $Z'/F$ be the center of the algebraic group $G'/F$ and denote
$Z'_\infty=\prod_{v\in V_\infty} Z'_v=\prod_{v\in V_\infty} Z'(F_v)$.
At an archimedean place $v\in V_\infty$ we let $K'_v$ be the product of a maximal compact subgroup of the real Lie group $G'_v$ and $Z'_v$. Explicitly, we get
$$K'_v=\left\{
\begin{array}{ll}
Sp(k)\R^* & \textrm{if $v$ non-split}\\
O(n)\R^* & \textrm{if $v$ split and real }\\
U(n)\C^* & \textrm{if $v$ complex,}
\end{array}
\right.$$
and define $K'_\infty=\prod_{v\in V_\infty} K'_v$. Analogously, we set $K_\infty=\prod_{v\in V_\infty} K_v$ where $K_v:=K'_v$ at split places and $K_v:=O(n)\R^*$ at non-split places. By $K'^\circ_\infty$ (resp., $K^\circ_\infty$) we denote the topological connected component of the identity within $K'_\infty$ (resp., $K_\infty$). Hence, locally
$$K'^\circ_v=\left\{
\begin{array}{ll}
Sp(k)\R_+ & \textrm{if $v$ non-split}\\
SO(n)\R_+ & \textrm{if $v$ split and real }\\
U(n)\C^* & \textrm{if $v$ complex.}
\end{array}
\right.$$
We assume familiarity with the basic facts and notions concerning $(\g_\infty',K'^\circ_\infty)$-modules (and $(\g_\infty,K^\circ_\infty)$-modules), to be found in the book of Borel--Wallach \cite{bowa}, 0, I. All Lie-group representations $\Pi'_\infty=\bigotimes_{v\in V_\infty} \Pi'_v$ of $G'_\infty$ appearing in this paper define a $(\g_\infty',K'^\circ_\infty)$-module and hence $(\g_v',K'^\circ_v)$-modules, which we shall all denote by the same letter as the original Lie group representation. In particular, this applies to a highest weight representation  $E_\mu=\bigotimes_{v\in V_\infty} E_{\mu_v}$. If furthermore, $\Pi'_\infty=\bigotimes_{v\in V_\infty} \Pi'_v$ is any $(\g_\infty',K'^\circ_\infty)$-module, then we denote by
$$H^q(\g_\infty',K'^\circ_\infty,\Pi'_\infty)$$
its space of $(\g_\infty',K'^\circ_\infty)$-cohomology (in degree $q$), cf. \cite{bowa}, I.5. A module $\Pi'_\infty$ is called {\it cohomological}, if there is a highest weight representation $E_\mu$ as in Section \ref{sect:highweights}, such that $H^q(\g_\infty',K'^\circ_\infty,\Pi'_\infty\otimes E_\mu)\neq 0$ for some degree $q$. It is a basic fact that these cohomology groups obey the K\"unneth-rule, i.e.,
$$H^q(\g_\infty',K'^\circ_\infty,\Pi'_\infty\otimes E_\mu)\cong\bigoplus_{\sum_v q_v=q}\quad\bigotimes_{v\in V_\infty} H^{q_v}(\g'_v,K'^\circ_v,\Pi'_v\otimes E_{\mu_v}).$$
Hence, $\Pi'_\infty$ is cohomological, if and only if all its local components $\Pi'_v$ are, i.e., they have non-vanishing $(\g_v',K'^\circ_v)$-cohomology with respect to some local highest weight representation $E_{\mu_v}$.

\section{Generalities on automorphic representations of $G'(\A)$}\label{sect:automf}
\subsection{}
We call an irreducible sub-quotient $\Pi'$ of the space $\mathcal A(G'(F)\backslash G'(\A))$ of automorphic forms an {\it automorphic representation} of $G'(\A)$ (although $\Pi'$ is strictly speaking not a $G'(\A)$-module), cf. the article of Borel--Jacquet \cite{bojac} 3-4. Let $\R_+$ be the multiplicative group of positive real numbers, viewed as a subgroup
$$\R_+\hookrightarrow G'(\A)$$
by embedding it diagonally into $G'_\infty$. Throughout this paper, we will identify the quotient $\R_+\backslash G'(\A)$, with $G'(\A)^{(1)}=\ker H_G$, $H_G$ being the Harish-Chandra height function $G'(\A)\ra\C$, cf.\ \cite{franke}, p. 185. Doing so, $\R_+ G'(F)\backslash G'(\A)$ has finite volume and it therefore makes sense to talk about subspaces of square-integrable automorphic forms in $\mathcal A(\R_+G'(F)\backslash G'(\A))$. Now, recall that by its very definition, every automorphic form is annihilated by some power of an ideal $\mathcal J$ of finite codimension in the center of the universal enveloping algebra of $\g'_\C=\g'_\infty\otimes_{\R}\C$. Let us fix such an ideal $\mathcal J$ and denote by
$$\mathcal A_\mathcal J(G')\subset\mathcal A(\R_+G'(F)\backslash G'(\A))$$
the $G'(\A)$-submodule consisting of those automorphic forms which are annihilated by some power of $\mathcal J$.

For later use, we will now recall a fine decomposition of the latter space $\mathcal A_\mathcal J(G')$, which was developed by Franke--Schwermer in \cite{schwfr} Thm.\ 1.4 and also in a similar way by M\oe glin--Waldspurger in \cite{moewal} III, Thm.\ 2.6., taking into account the so-called {\it parabolic support} and the {\it cuspidal support} of an automorphic representation. As a first step, $\mathcal A_\mathcal J(G')$ can be decomposed as a $G'(\A)$-module into a finite direct sum, cf. \cite{schwfr} 1.1.(4),
\begin{equation}\label{eq:autdecP'}
\mathcal A_\mathcal J(G')\cong\bigoplus_{\{P'\}} \mathcal A_{\mathcal J,\{P'\}}(G'),
\end{equation}
ranging over the set of all {\it associate classes} $\{P'\}$ of a parabolic $F$-subgroup $P'$ of $G'$. (Recall therefore that two parabolic $F$-subgroups $P_1'$ and $P_2'$ of $G'$ are called associate, if their Levi-factors $L_1'$ and $L_2'$ are conjugate by an element in $G'(F)$.) More precisely, the spaces $\mathcal A_{\mathcal J,\{P'\}}(G')$ consist exactly of those automorphic forms $f\in\mathcal A_{\mathcal J}(G')$, which are negligible along every parabolic $F$-subgroup $Q'\notin\{P'\}$, i.e., with respect to a Levi-decomposition of $Q'=L_{Q'}N_{Q'}$, the constant term $f_{Q'}$ is orthogonal to the space of cuspidal automorphic forms on $L_{Q'}(\A)$.

We remark that within the direct sum \eqref{eq:autdecP'}, the subspace $\mathcal A_{cusp, \mathcal J}(G')$ of all cuspidal automorphic forms in $\mathcal A_\mathcal J(G')$ is given as the summand index by the class $\{G'\}$ itself:
$$\mathcal A_{cusp, \mathcal J}(G')=\mathcal A_{\mathcal J,\{G'\}}(G').$$

\subsection{}
The various summands $\mathcal A_{\mathcal J,\{P'\}}(G')$ can be decomposed even further. Therefore, let $P'=L'N'$ be a Levi-decomposition of the parabolic $F$-subgroup $P'$. Its Levi factor $L'$ is hence of the form $L'\cong\prod_{i=1}^r GL_{m_i}'$, with $\sum_{i=1}^r m_i=m$. Now, recall from \cite{schwfr}, 1.2, the notion of an {\it associate class} $\varphi_{P'}$ of cuspidal automorphic representations of the Levi subgroups of the elements in the class $\{P'\}$. These classes $\varphi_{P'}$ may be parameterized by pairs of the form $(\chi,\tilde\Pi')$, where
\begin{enumerate}
\item $\tilde\Pi'$ is a unitary cuspidal automorphic representation of $L'(\A)$, whose central character vanishes on the diagonally embedded group $\R_+^r\hookrightarrow L'_\infty\hookrightarrow L'(\A)\cong\prod_{i=1}^r GL_{m_i}'(\A)$
\item $\chi:\R_+^r\rightarrow\C^*$ is a Lie group character and
\item the infinitesimal character of $\tilde\Pi'_\infty$ and the derivative $d\chi\in\textrm{Hom}(\C^r,\C)$ of $\chi$ are compatible with the action of $\mathcal J$ (cf. \cite{schwfr}, 1.2).
\end{enumerate}
Each associate class $\varphi_{P'}$ may hence be represented by a cuspidal automorphic representation $\Pi':=\tilde\Pi'\otimes e^{\langle d\chi,H_{P'}\rangle}$ of $L'(\A)$, where $H_{P'}$ is again the Harish-Chandra homomorphism $L'(\A)\ra\C^r$, cf. \cite{franke}, p. 185. Given $\varphi_{P'}$, represented by a cuspidal representation $\Pi'$ of the above form, a $G'(\A)$-submodule
$$\mathcal A_{\mathcal J,\{P'\},\varphi_{P'}}(G')$$
of $\mathcal A_{\mathcal J,\{P'\}}(G')$ was defined in \cite{schwfr}, 1.3 as follows: It is the span of all possible holomorphic values or residues of all Eisenstein series attached to $\tilde\Pi'$, evaluated at the point $d\chi$, together with all their derivatives. This definition is independent of the choice of the representatives $P'$ and $\Pi'$, thanks to the functional equations satisfied by the Eisenstein series considered. For details, we refer the reader to \cite{schwfr} 1.2-1.4.

As a consequence of Franke's theorem, cf. \cite{franke}, Thm.\ 14, the following refined decomposition as $G'(\A)$-modules of the spaces $\mathcal A_{\mathcal J,\{P'\}}(G')$ of automorphic forms was obtained in \cite{schwfr}, Thm.\ 1.4:
\begin{equation}\label{eq:autdecP'phi}
\mathcal A_{\mathcal J,\{P'\}}(G')\cong\bigoplus_{\varphi_{P'}} \mathcal A_{\mathcal J,\{P'\},\varphi_{P'}}(G').
\end{equation}

This gives rise to the following

\begin{defn}
Let $\Psi'$ be an automorphic representation of $G'(\A)$, whose central character is trivial on the diagonally embedded group $\R_+$. If $\Psi'$ is an irreducible subquotient of the space $\mathcal A_{\mathcal J,\{P'\},\varphi_{P'}}(G')$, we call the associate class $\{P'\}$ a {\it parabolic support} and the associate class $\varphi_{P'}$ a {\it cuspidal support} of $\Psi'$.
\end{defn}

\subsection{Discrete series representations} Let $Z'/F$ be the center of the algebraic group $G'/F$ and
$$\omega: Z'(F)\backslash Z'(\A)\rightarrow\C^*$$
be a unitary, smooth character. The space
$$L^2_{dis}(G'(F)\backslash G'(\A),\omega)$$
denotes the space of all automorphic functions
$$f:G'(F)\backslash G'(\A)\rightarrow\C$$
which satisfy $f(zg)=\omega(z)f(g)$, for all $z\in Z'(\A)$ and almost all $g\in G'(\A)$, and $|f|^2$ is square-integrable as a function on $Z'(\A)G'(F)\backslash G'(\A)$ (with respect to the usual quotient measure). This space is the {\it discrete spectrum} of $G'(\A)$, cf. Borel \cite{bor3}, 9.6. Via the right regular action, it is a representation space of $G'(\A)$ and it decomposes as a direct Hilbert sum of irreducible, unitary $G'(\A)$-representations $\tilde\Pi'$, called {\it unitary discrete series} representations:
$$L^2_{dis}(G'(F)\backslash G'(\A),\omega)=\widehat\bigoplus\tilde\Pi'.$$
Observe that there is no term for the multiplicity appearing in this decomposition, which is due to the Multiplicity One Theorem for unitary discrete series, proved by Badulescu--Renard in \cite{ioanrenard}, Thm.\ 18.1.(b).

\subsection{}
Let us now define a certain sub-class of automorphic representations, which we will mainly focus on in this paper.
\begin{defn}
Let $\mathscr D(G')$ be the family of all twists
$$\Pi'=\tilde\Pi'\otimes|\textrm{det}'|^s,$$
$\tilde\Pi'$ being a unitary discrete series representation of $G'(\A)$ and $s\in\C$. These twisted representations $\Pi'$ are usually called {\it essentially discrete series} representations, but for brevity of terminology we will only say {\it discrete series} representations.
\end{defn}

Clearly, $\mathscr D(G')$ contains the family of all twists of unitary cuspidal automorphic representations. In accordance with our previous terminology, we will henceforth call such a twist simply a {\it cuspidal} automorphic representation.

\section{A classification of the cohomological, irreducible, unitary dual of $GL_k(\H)$ and $GL_n(\R)$}\label{sect:irrunitcoh}
\subsection{}
Being interested in arithmetic properties of automorphic forms of $G'/F$, the $(\g'_\infty,K'^\circ_\infty)$-cohomology of automorphic representations of $G'(\A)$ will be an important tool. As we are planning to compare cohomological discrete series representations of the inner form $G'(\A)$ to cohomological automorphic representations of $G(\A)$, we shall in particular obtain some knowledge on the cohomological representations of the groups $G'_v$ and $G_v$, $v$ being a non-split archimedean place. In this section, we will hence determine the cohomological, irreducible, unitary dual of the real Lie groups $G'_v$ and $G_v$, $v\in V_\infty$ non-split. This amounts in giving a classification of all irreducible unitary representations of $G'_v=GL_k(\H)$, $k\geq 1$, and $G_v=GL_n(\R)$, $n=2k$, which have non-zero $(\g'_v,K'^\circ_v)$-cohomology twisted by some irreducible highest weight representation $E_{\mu_v}$.

Speh gave a list of irreducible unitary representations of $GL_n(\R)$ in \cite{speh}, which have non-zero cohomology with respect to the trivial representation $E_{\mu_v}=\C$. Using translation functors, see, e.g. \cite{bowa} VI, sect. 0, this result can be adopted to the case of a general coefficient system $E_{\mu_v}$. However, as we also want to classify the cohomological, irreducible, unitary dual of $GL_k(\H)$, we are not going to use Speh's paper, but directly use the results of Vogan-Zuckerman, \cite{vozu}, in order to give a complete classification of the cohomological, irreducible, unitary dual of $GL_k(\H)$ and $GL_n(\R)$ in one go. To lighten the burden on the notation, we henceforth drag the subscript ``$v$'' about, keeping in mind that all objects are local ones at a non-split archimedean place.

\subsection{Non-equivalent $\theta'$-stable parabolic subalgebras of $\g'$}\label{sect:parabg'}
Let $\theta'$ be the usual Cartan-involution $\theta'(X)=-\bar X^t$ on $\g'$ ($\bar X$ denoting the standard conjugation of quaternionic matrices) giving rise to the Cartan decomposition $\g'\cong\k'\oplus\p'$\footnote{It is traditional to denote the compact part within the Cartan decomposition by $\k$ and so we stick to it; our $\k'$ in this subsection should, however, not be confused with the Lie algebra of $K'=Sp(k)\R^*$. The same remark applies to $\k$ and $K=O(n)\R^*$ in the next subsection}. According to this decomposition, let $\h'=\t'\oplus\a'$ be a maximal compact, $\theta'$-stable Cartan subalgebra. We take

$$\t'=\left\{\left( \begin{array}{ccc}
ix_1 &  & 0\\
  &\ddots &\\
0 &  & ix_k
\end{array}\right)\Bigg| x_i\in\R\right\} \quad\quad \a'=\left\{\left( \begin{array}{ccc}
y_1 &  & 0\\
  &\ddots &\\
0 &  & y_k
\end{array}\right)\Bigg| y_i\in\R\right\}.$$
For any $H\in\h'_\C$, let $e_j(H):=ix_j$ and $f_j(H):=y_j$, $1\leq j\leq k$. Then the set of roots of $\g'_\C$ with respect to $\h'_\C$ is given by
$$\Delta(\g'_\C,\h'_\C)=\{\pm e_i\pm e_j\pm (f_i-f_j), 1\leq i< j\leq k\}\cup\{\pm 2e_i, 1\leq i\leq k\}.$$
Furthermore,
\begin{eqnarray*}
\Delta(\k'_\C,\t'_\C) & = & \{\pm e_i\pm e_j, 1\leq i< j\leq k\}\cup\{\pm 2e_i, 1\leq i\leq k\}\\
\Delta(\p'_\C,\t'_\C) & = & \{\pm e_i\pm e_j, 1\leq i< j\leq k\}\\
W_{\k'}& \cong & S_k\ltimes \{\pm 1\}^k,
\end{eqnarray*}
where $S_k$ is the symmetric group of index $k$. It is hence clear that an element of the Weyl group $W_{\k'}$ acts on $H\in i\t'$ by permuting the entries $x_j$, $1\leq j\leq k$ and changing their signs. This gives rise to the quotient $i\t'/W_{\k'}$, which admits a polyhedral cone decomposition. We obtain

\begin{lem}\label{lem:g'1}
The following sets are pairwise in $1$-to-$1$ correspondence:
\begin{itemize}
\item[(1)] $\{$open, polyhedral cones in $i\t'/W_{\k'}\}$
\item[(2)] $\{ x=(\underbrace{0,...,0}_{k_0},\underbrace{1,...,1}_{k_1},...,\underbrace{r,...,r}_{k_r}), k=\sum_{i=0}^rk_i, k_i>0 \textrm{ for }i\geq 1\}$
\item[(3)] $\{$ordered partitions of $k$: $\underline k = [k_0,k_1,...,k_r], k_0\geq 0, k_i>0 \textrm{ for }i\geq 1\}$
\end{itemize}
\end{lem}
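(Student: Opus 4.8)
The plan is to make all three sets explicit and exhibit the bijections directly, since the combinatorics is elementary once the Weyl group action is understood. First I would describe the fundamental domain for the $W_{\k'}$-action on $i\t'$. Since $W_{\k'}\cong S_k\ltimes\{\pm1\}^k$ acts on $(x_1,\dots,x_k)\in i\t'$ by permutations and sign changes, a fundamental domain is the closed cone $\{x_1\geq x_2\geq\cdots\geq x_k\geq 0\}$. The open polyhedral cones in the induced cone decomposition of $i\t'/W_{\k'}$ are then the (relatively) open faces of this Weyl chamber, i.e.\ the sets cut out by choosing, for each consecutive pair, whether $x_i = x_{i+1}$ or $x_i > x_{i+1}$, and whether $x_k = 0$ or $x_k > 0$. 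This immediately gives a bijection with set (2): grouping the coordinates into blocks of equal value, the block equal to $0$ has size $k_0\geq 0$ and the strictly positive blocks have sizes $k_1,\dots,k_r>0$ reading from smallest value upward; the precise positive values are irrelevant up to the cone structure, so one may normalize them to $1,2,\dots,r$, which is exactly the description of the representatives $x=(0,\dots,0,1,\dots,1,\dots,r,\dots,r)$.

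Next I would note that set (2) and set (3) are in obvious bijection: the tuple $x$ records nothing more than the sequence of block sizes $[k_0,k_1,\dots,k_r]$, and conversely an ordered partition $\underline k=[k_0,k_1,\dots,k_r]$ with $k_0\geq 0$ and $k_i>0$ for $i\geq 1$ reconstructs $x$ uniquely. The only subtlety worth spelling out is the asymmetry of the first entry $k_0$, which is allowed to be zero (the value-$0$ block may be empty, i.e.\ all coordinates strictly positive) whereas the later $k_i$ must be positive (equal nonzero values are always grouped into a single block); this is precisely the bookkeeping that matches the open faces of the chamber and explains the stated conditions. Both directions are manifestly inverse to each other, and both are evidently compatible with the bijection between (1) and (2).

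Composing, I get the pairwise $1$-to-$1$ correspondences claimed. The routine verification amounts to checking that two points of $i\t'$ lie in the same open cone of $i\t'/W_{\k'}$ if and only if their $W_{\k'}$-orbits, sorted into the fundamental domain, have the same ``equality pattern'' among consecutive sorted coordinates and the same vanishing pattern of the last one --- this is immediate from the description of the walls $x_i=x_{i+1}$ and $x_k=0$ as exactly the hyperplanes fixed pointwise by the reflections generating $W_{\k'}$.

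I do not expect a genuine obstacle here: the content is the identification of $W_{\k'}$ with $S_k\ltimes\{\pm1\}^k$ and of its fixed hyperplanes, both of which are read off from the root data $\Delta(\k'_\C,\t'_\C)=\{\pm e_i\pm e_j\}\cup\{\pm 2e_i\}$ already recorded above. The mildly delicate point, and the one I would be most careful to state cleanly, is the bookkeeping convention that distinguishes $k_0$ (the multiplicity of the value $0$, possibly $0$) from the other $k_i$ (multiplicities of nonzero values, necessarily positive), since getting this wrong would either over- or under-count the cones; but once phrased in terms of open faces of the closed chamber $\{x_1\geq\cdots\geq x_k\geq 0\}$ it is forced and unambiguous.
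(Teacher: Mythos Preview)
Your argument is correct and follows essentially the same approach as the paper, which simply asserts that the bijection (1)$\leftrightarrow$(2) is ``a direct consequence of the description of the action of $W_{\k'}$ on $i\t'$'' and that (2)$\leftrightarrow$(3) ``holds trivially.'' You have just spelled out in detail what the paper leaves implicit, including the identification of the fundamental domain and the careful bookkeeping for the $k_0$-block; the only cosmetic discrepancy is the ordering convention for the coordinates of $x$ (the paper notes that the $j$-th entry of $x$ corresponds to the basis element dual to $e_{k-j+1}$, i.e.\ the indexing is reversed relative to the chamber $\{x_1\geq\cdots\geq x_k\geq 0\}$), which does not affect the argument.
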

\begin{proof}
The existence of the bijection (1)$\leftrightarrow$(2) is a direct consequence of the description of the action of $W_{\k'}$ on $i\t'$. (The $j$-th entry of $x$ is its coordinate with respect to the basis element of $i\t'$, which is dual to $e_{k-j+1}$.) The bijection (2)$\leftrightarrow$(3) holds trivially.
\end{proof}

Now, let $\mathfrak Q'$ be the set of all $\theta'$-stable parabolic subalgebras $\q'$ of $\g'_\C$, which contain a fixed Borel subalgebra. Recall that $K'\cong Sp(k)\R^*$ acts on this set by the adjoint action. This leads to a finite set $\mathfrak Q'/K'$ of orbits or - otherwise put - $K'$-conjugacy classes of $\theta'$-stable parabolic subalgebras $\q'$ containing a fixed Borel subalgebra. The following lemma says that the set of open polyhedral cones in $i\t'/W_{\k'}$ and $\mathfrak Q'/K'$ are in bijection.

\begin{lem}[\cite{knappvogan} IV, Prop.\ 4.76]\label{lem:g'2}
Every $x\in i\t'/W_{\k'}$ defines a $\theta'$-stable,
parabolic subalgebra $\q'_x$ in $\g'_\C$ by assigning it a Levi decomposition $\q'_x:=\l'_x\oplus\u'_x,$ where
$$\l'_x:=\h'_\C\oplus\bigoplus_{\substack{\alpha\in\Delta(\g'_\C,\t'_\C)\\ \alpha(x)=0}}(\g'_\C)_\alpha\quad\quad\textrm{and}\quad\quad \u'_x:=\bigoplus_{\substack{\alpha\in\Delta(\g'_\C,\t'_\C)\\ \alpha(x)>0}}(\g'_\C)_\alpha$$
and two such $x_1$, $x_2$ define the same parabolic subalgebra $\q'_{x_1}=\q'_{x_2}$ if and only if they are
in the same open, polyhedral cone. \\ Conversely, up to conjugacy by $K'$, any
$\theta'$-stable, parabolic subalgebra $\q'$ containing a fixed Borel subalgebra is of the form
$\q=\q_x$, with $x\in i\t'/W_{\k'}$.
\end{lem}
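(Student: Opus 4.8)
The plan is to obtain this as a specialization of the general structure theory of $\theta$-stable parabolic subalgebras, as in Knapp--Vogan \cite{knappvogan}, IV.4, the only input genuinely particular to $\g'=\mathfrak{gl}_k(\H)$ being the combinatorics already recorded in Lemma \ref{lem:g'1}. There are three assertions to verify: (a) that $x\mapsto\q'_x$ is a well-defined assignment of $\theta'$-stable parabolic subalgebras; (b) that $\q'_{x_1}=\q'_{x_2}$ holds exactly when $x_1,x_2$ lie in the same open polyhedral cone; and (c) the converse statement that every $\theta'$-stable parabolic containing the fixed Borel is $K'$-conjugate to some $\q'_x$.

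For (a) I would first note that for $x\in i\t'$ the roots $\alpha\in\Delta(\g'_\C,\t'_\C)$ take real values $\alpha(x)$, since they are imaginary on $\t'$ itself; hence $\operatorname{ad}(x)$ acts semisimply on $\g'_\C$ with real eigenvalues, and the standard criterion shows that the sum of the eigenspaces with non-negative eigenvalue is a parabolic subalgebra with nilradical $\u'_x$ (positive eigenvalues) and Levi factor $\l'_x$ (eigenvalue $0$), exactly as displayed. Because $x\in\t'\subseteq\k'$ we have $\theta'(x)=x$, so $\theta'$ commutes with $\operatorname{ad}(x)$ and preserves every eigenspace; thus $\q'_x$ is $\theta'$-stable. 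This is the Vogan--Zuckerman construction, which I would simply cite.

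For (b), two such subalgebras coincide precisely when for each root $\alpha$ the sign of $\alpha(x_1)$ (zero, positive, or negative) agrees with that of $\alpha(x_2)$, i.e.\ when $x_1,x_2$ lie in the relative interior of the same face of the arrangement of root hyperplanes in $i\t'$. Here I would fix the Borel by choosing the positive system of $\Delta(\g'_\C,\t'_\C)=\{\pm e_i\pm e_j:i<j\}\cup\{\pm 2e_i\}$ consisting of $e_i-e_j\ (i<j)$, $e_i+e_j$ and $2e_i$; then $W_{\k'}\cong S_k\ltimes\{\pm1\}^k$ is the full Weyl group of this (type $C_k$) root system, the closed chamber $C=\{x_1\ge\cdots\ge x_k\ge0\}$ is a fundamental domain, and every $x\in C$ satisfies $\alpha(x)\ge 0$ on all positive roots, so that $\q'_x$ contains the fixed Borel. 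The relatively open faces of $C$ are exactly the open polyhedral cones of Lemma \ref{lem:g'1}, indexed by the ordered partitions $\underline k=[k_0,\dots,k_r]$; on each face the set of vanishing roots, hence $\q'_x$, is constant, and distinct faces give distinct such sets. This yields the asserted bijection between open polyhedral cones in $i\t'/W_{\k'}$ and the subalgebras $\q'_x$.

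For (c) I would invoke \cite{knappvogan}, IV, Prop.\ 4.76 directly: a $\theta'$-stable parabolic $\q'$ admits a $\theta'$-stable Levi factor, and—after conjugating by $K'$, using that the $\theta'$-stable Cartan subalgebras of $\g'$ form a single $K'$-class—one finds an element $x$ in the centre of that Levi with $x\in i\t'$ and $\q'=\q'_x$; normalizing $x$ into $C$ by the action of $N_{K'}(\t')$, which is $W_{\k'}$, exhibits $\q'$ up to $K'$-conjugacy as $\q'_x$ for a unique face of $C$, and when $\q'$ already contains the fixed Borel no conjugation is needed. I expect this last step—importing the existence of a $\theta'$-stable Levi and the reduction of $K'$-conjugacy to the $W_{\k'}$-action—to be the only substantive point; everything else is the elementary linear algebra of (a) together with the combinatorics already in Lemma \ref{lem:g'1}.
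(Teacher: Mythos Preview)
The paper does not actually give a proof of this lemma: it is stated with the attribution ``\cite{knappvogan} IV, Prop.\ 4.76'' and no argument is supplied, the result being imported wholesale from Knapp--Vogan. Your proposal is a correct and reasonable unpacking of that citation, and is entirely consistent with how the paper uses the result; there is nothing to compare it against in the paper itself.
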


Together with Lem.\ \ref{lem:g'1}, the last lemma provides a parametrization of the set of $K'$-conjugacy classes of $\theta'$-stable parabolic subalgebras $\q'$ of $\g'_\C$ containing a fixed Borel subalgebra by ordered partitions $\underline k=[k_0,k_1,...,k_r]$ of $k$: $k=\sum_{i=0}^r k_i$, $k_0\geq 0$ and $k_i>0$ for $i\geq 1$. Therefore, we shall henceforth write $\q'_{\underline k}$ for a class in $\mathfrak Q'/K'$. Its Levi subalgebra has real part
\begin{equation}\label{eq:lreal'}
\l'^\R_{\underline k}:=\l'_{\underline k}\cap\g'\cong\g\l_{k_0}(\H)\oplus\bigoplus_{i=1}^r\g\l_{k_i}(\C).
\end{equation}
In view of Salamanca-Riba's paper \cite{salam}, Prop.\ 1.11, we shall introduce another relation on $\mathfrak Q'/K'$:
$\q'_{\underline{k}_1}\sim\q'_{\underline{k}_2}$ if and only if $\Delta(\u'_{\underline{k}_1}\cap\p'_\C,\t'_\C)=\Delta(\u'_{\underline{k}_2}\cap\p'_\C,\t'_\C)$. It is an easy exercise to check that in terms of the parameterizing partitions ${\underline{k}_1}$ and ${\underline{k}_2}$, $\q'_{\underline{k}_1}\sim\q'_{\underline{k}_2}$ if and only if either $\underline{k}_1=\underline{k}_2$ or
\begin{equation}\label{eq:equivrel}
\underline k_1=[0,1,k_2,...,k_r]\quad\quad\textrm{and}\quad\quad\underline k_2=[1,k_2,...,k_r].
\end{equation}
(E.g., $\underline k_1=[0,1,2,1,3]$ and $\underline k_2=[1,2,1,3]$ for $k=7$.) Within such a non-singleton equivalence class, we pick the $K'$-conjugacy class parameterized by $\underline k_1$. To conclude this subsection, we define $\mathcal Q'$ to be the so chosen set of representatives of equivalence classes in $\mathfrak Q'/K'$, i.e., of $K'$-conjugacy classes of $\theta'$-stable parabolic subalgebras $\q'=\q'_{\underline k}$ of $\g'_\C$ containing a fixed Borel subalgebra.

\subsection{Non-equivalent $\theta$-stable parabolic subalgebras of $\g$}\label{sect:parabg}
We will now determine the same data for the split case. Therefore, let $\theta$ be the usual Cartan-involution $\theta(X)=- X^t$ on $\g$ leading to the Cartan decomposition $\g\cong\k\oplus\p$ and standard maximal compact, $\theta$-stable Cartan subalgebra $\h=\t\oplus\a$. We have

$$\t=\left\{\left( \begin{array}{ccccc}
0 &  x_1 & & & 0\\
-x_1 & 0  & & &\\
& & \ddots & &\\
 &  & & 0 & x_k\\
 &  & & -x_k & 0
\end{array}\right)\Bigg| x_i\in\R\right\}
$$
and
$$
\a=\left\{\left( \begin{array}{ccccc}
y_1 &  & & & \\
& y_1  & &\\
& &\ddots & &\\
 &&  & y_k & \\
& &  &  & y_k
\end{array}\right)\Bigg| y_i\in\R\right\}.$$
For $H\in\h_\C$, let again $e_j(H):=ix_j$ and $f_j(H):=y_j$, $1\leq j\leq k$. Then the set of roots of $\g_\C$ with respect to $\h_\C$ is given by
$$\Delta(\g_\C,\h_\C)=\{\pm e_i\pm e_j\pm (f_i-f_j), 1\leq i< j\leq k\}\cup\{\pm 2e_i, 1\leq i\leq k\}.$$
However, in contrast to the non-split case, the sets of compact and non-compact roots change their roles
\begin{eqnarray*}
\Delta(\k_\C,\t_\C) & = & \{\pm e_i\pm e_j, 1\leq i< j\leq k\}\\
\Delta(\p_\C,\t_\C) & = & \{\pm e_i\pm e_j, 1\leq i< j\leq k\}\cup\{\pm 2e_i, 1\leq i\leq k\}.
\end{eqnarray*}
Slightly abusing notation, we set
$$W_{\k}:= S_k\ltimes \{\pm 1\}^k,$$
and let this ``extended'' Weyl group $W_{\k}$ act on $H\in i\t$ by permuting the entries $x_j$, $1\leq j\leq k$ and changing their signs. Whence, the following lemma is obvious by Lem.\ \ref{lem:g'1}. See also Speh's aforementioned article \cite{speh}, p. 464.

\begin{lem}\label{lem:g1}
The following sets are pairwise in $1$-to-$1$ correspondence:
\begin{itemize}
\item[(1)] $\{$open, polyhedral cones in $i\t/W_{\k}\}$
\item[(2)] $\{ x=(\underbrace{0,...,0}_{k_0},\underbrace{1,...,1}_{k_1},...,\underbrace{r,...,r}_{k_r}), k=\sum_{i=0}^rk_i, k_i>0 \textrm{ for }i\geq 1\}$
\item[(3)] $\{$ordered partitions of $n$: $\underline n = [n_0,n_1,...,n_r], n_0\geq 0, n_i=2k_i$, where $k_i>0$ for $i\geq 1\}$
\end{itemize}
\end{lem}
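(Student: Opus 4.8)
The plan is to deduce the statement from Lemma~\ref{lem:g'1} by an essentially verbatim transport of its proof. The only features of the non-split situation that entered that proof were the shape of the restricted root system, $\Delta(\g'_\C,\t'_\C)=\{\pm e_i\pm e_j,\ 1\le i<j\le k\}\cup\{\pm 2e_i,\ 1\le i\le k\}$, and the description of $W_{\k'}\cong S_k\ltimes\{\pm1\}^k$ as the group acting on $i\t'$ by arbitrary permutations and sign changes of the coordinates $x_1,\dots,x_k$. In the split case one has the identical restricted root system $\Delta(\g_\C,\t_\C)=\{\pm e_i\pm e_j,\ 1\le i<j\le k\}\cup\{\pm 2e_i,\ 1\le i\le k\}$ — only the partition of the roots into compact and noncompact ones has changed, which plays no role here — and, \emph{by definition}, $W_\k:=S_k\ltimes\{\pm1\}^k$ acts on $i\t$ in precisely the same fashion. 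Hence $i\t/W_\k$ carries the same polyhedral cone decomposition as $i\t'/W_{\k'}$, and the bijection $(1)\leftrightarrow(2)$ is literally the one established in Lemma~\ref{lem:g'1}.

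Concretely, one runs the same normalisation: using permutations and sign changes, move a point of $i\t$ into the region $0\le x_1\le\dots\le x_k$; on such a point the sign of each root $2e_i$ is governed by whether $x_i=0$ or $x_i>0$, the sign of each $e_i-e_j$ ($i<j$) by whether $x_i=x_j$ or $x_i<x_j$, while the $e_i+e_j$ contribute nothing new. So the relatively open cone through the point — the set of points at which every root has the same sign (positive, negative, or zero) as at the given point — is pinned down by the number $k_0\ge0$ of vanishing coordinates together with the coincidence pattern among the remaining, strictly positive, coordinates, which is exactly the datum of a staircase vector $x=(0^{k_0},1^{k_1},\dots,r^{k_r})$ with $k=\sum_{i=0}^r k_i$ and $k_i>0$ for $i\ge1$. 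For $(2)\leftrightarrow(3)$ I would compose the bijection of Lemma~\ref{lem:g'1} from such vectors onto ordered partitions $[k_0,k_1,\dots,k_r]$ of $k$ with the evident bijection $[k_0,k_1,\dots,k_r]\mapsto[2k_0,2k_1,\dots,2k_r]$ onto the set of ordered partitions $[n_0,n_1,\dots,n_r]$ of $n=2k$ with all parts even, $n_0=2k_0\ge0$ and $n_i=2k_i>0$ for $i\ge1$ — which is precisely the set named in item~(3).

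The single point deserving care — and the reason the lemma is phrased with the explicit redefinition of $W_\k$ — is that this $W_\k$ is \emph{not} the genuine Weyl group $W(\k_\C,\t_\C)\cong S_k\ltimes\{\pm1\}^{k-1}$ of $\k\cong\mathfrak{so}(2k)$, but the strictly larger hyperoctahedral group $S_k\ltimes\{\pm1\}^k$; it is the orbits of this larger group on $i\t$ — reflecting conjugation by the full $K=O(n)\R^*$ rather than by its identity component — that index the $K$-conjugacy classes of $\theta$-stable parabolic subalgebras of $\g_\C$. Once this identification is taken for granted, the argument of Lemma~\ref{lem:g'1} applies word for word and there is no further obstacle.
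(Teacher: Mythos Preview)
Your proposal is correct and takes essentially the same approach as the paper: the paper does not give a separate proof of this lemma but simply declares it ``obvious by Lem.~\ref{lem:g'1}'' after noting that the extended group $W_\k:=S_k\ltimes\{\pm1\}^k$ acts on $i\t$ exactly as $W_{\k'}$ acts on $i\t'$. You have spelled out the transport in more detail --- including the useful observation that $W_\k$ is strictly larger than the genuine Weyl group $W(\k_\C,\t_\C)$ of type $D_k$ --- but the underlying argument is the same.
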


Now, let $\mathfrak Q$ be the set of all $\theta$-stable parabolic subalgebras $\q$ of $\g_\C$, which contain a fixed Borel subalgebra. Recall that $K^\circ\cong SO(n)\R_+$ acts on this set by the adjoint action. This leads a finite set $\mathfrak Q/K^\circ$ of $K^\circ$-conjugacy classes of $\theta$-stable parabolic subalgebras $\q$ containing a fixed Borel subalgebra. Again there is a bijection between the set of open polyhedral cones in $i\t/W_{\k}$ and $\mathfrak Q/K^\circ$, explaining our choice for $W_\k$:

\begin{lem}[\cite{knappvogan} IV, Prop.\ 4.76]\label{lem:g2}
Every $x\in i\t/W_{\k}$ defines a $\theta$-stable,
parabolic subalgebra $\q_x$ in $\g_\C$ via $\q_x:=\l_x\oplus\u_x,$ where
$$\l_x:=\h_\C\oplus\bigoplus_{\substack{\alpha\in\Delta(\g_\C,\t_\C)\\ \alpha(x)=0}}(\g_\C)_\alpha\quad\quad\textrm{and}\quad\quad \u_x:=\bigoplus_{\substack{\alpha\in\Delta(\g_\C,\t_\C)\\ \alpha(x)>0}}(\g_\C)_\alpha$$
and two such $x_1$, $x_2$ define the same parabolic subalgebra $\q_{x_1}=\q_{x_2}$ if and only if they are
in the same open, polyhedral cone. \\ Conversely, up to conjugacy by $K^\circ$, any
$\theta$-stable, parabolic subalgebra $\q$ containing a fixed Borel subalgebra of $\g_\C$ is of the form
$\q=\q_x$, with $x\in i\t/W_{\k}$.
\end{lem}

In particular, Lem.\ \ref{lem:g1} provides a parametrization of the set of $K^\circ$-conjugacy classes of $\theta$-stable parabolic subalgebras $\q$ of $\g_\C$ containing a fixed Borel subalgebra by ordered partitions $\underline n=[n_0,n_1,...,n_r]$ of $n$: $n=\sum_{i=0}^r n_i$, $n_0\geq 0$ and $n_i=2k_i>0$ for $i\geq 1$. Therefore, we shall henceforth write $\q_{\underline n}$ for a class in $\mathfrak Q/K^\circ$. The real part of its Levi subalgebra is isomorphic to
\begin{equation}\label{eq:lreal}
\l^\R_{\underline n}:=\l_{\underline n}\cap\g\cong\g\l_{n_0}(\R)\oplus\bigoplus_{i=1}^r\g\l_{k_i}(\C),
\end{equation}
see also \cite{speh}, p. 464. Introducing the analogous equivalence relation on $\mathfrak Q/K^\circ$ by letting $\q_{\underline{n}_1}\sim\q_{\underline{n}_2}$ if and only if $\Delta(\u_{\underline{n}_1}\cap\p_\C,\t_\C)=\Delta(\u_{\underline{n}_2}\cap\p_\C,\t_\C)$, we encounter a different phenomenon than in the non-split case. As every root appears at least once in the set of non-compact roots, for any $\theta$-stable parabolic subalgebra $\q_{\underline n}$ of $\g_\C$
$$\Delta(\u_{\underline{n}}\cap\p_\C,\t_\C)=\Delta(\u_{\underline{n}},\t_\C)$$
and so the relation ``$\sim$'' degenerates to equality:
$$\q_{\underline{n}_1}\sim\q_{\underline{n}_2}\quad\Leftrightarrow\quad \q_{\underline{n}_1}=\q_{\underline{n}_2}.$$
Again, we denote by $\mathcal Q$ the set of all such equivalence classes, i.e., $\mathcal Q=\mathfrak Q/K^\circ$ itself in this case.

\subsection{}\label{sect:L&L'}
Having compiled this data for $\g'$ and $\g$, we can now give the desired classification of the cohomological, irreducible, unitary dual of $G'$ and $G$. Therefore, let $E$ be an irreducible representation of $G'$ or $G$ on a finite-dimensional complex vector space. Observe that we have changed Cartan subalgebras in this section, so we will write $\lambda$ for its highest weight with respect to the standard choice of positivity on $\Delta(\g_\C,\h_\C)=\Delta(\g'_\C,\h'_\C)$, rather than $\mu$. As we will soon use the results of Vogan--Zuckerman \cite{vozu}, Section 5, we shall check when $\lambda$ does define a so-called {\it admissible} character: Given $\q'\in\mathcal Q'$ (resp., $\q\in\mathcal Q$) let $L'\subseteq G'$ (resp., $L\subseteq G$) be the connected subgroup with Lie algebra $\l'^\R$ (resp., $\l^\R$). According to \cite{vozu}, (5.1), a linear functional $\xi$ on $\l'$ (resp., $\l$) is called admissible if
\begin{enumerate}
\item $\xi$ is the differential of a unitary character of $L'$ (resp., $L$)
\item $\langle\xi,\alpha\rangle\geq 0$ for all roots $\alpha$ appearing in $\u'$ (resp., $\u$).
\end{enumerate}
As a corollary, a highest weight $\lambda$ defines an admissible character of $\l'$ if and only if
\begin{equation}\label{eq:extending}
\lambda|_{[\l'^\R,\l'^\R]}=0\quad\textrm{and}\quad\lambda|_{\a'}=0.
\end{equation}
As an explanation, the vanishing of $\lambda$ on the commutator $[\l'^\R,\l'^\R]$ is equivalent to saying that $\lambda:\h'_\C\ra\C$ can be extended to a character on $\l'\supseteq\h'_\C$, while the second condition $\lambda|_{\a'}=0$ is equivalent to $\lambda$ coming from a {\it unitary} character. The latter condition can once more be equivalently reformulated by saying that $\lambda\circ\theta'=\lambda$ or - again equivalent - that $E$ is self-dual. In particular, we may view $\lambda$ as being expressed in the functionals $e_j$, $1\leq j\leq k$, only, writing $\lambda=(\lambda_1,...,\lambda_k)$, $\lambda_j$ being the coefficient of $e_j$ in such a decomposition. Clearly, the same statements hold for $\g'$ being replaced by $\g$, removing the prime everywhere.

We let $\mathcal Q'(\lambda)$ (resp., $\mathcal Q(\lambda)$) be the set of all $\q'\in\mathcal Q'$ (resp., $\q\in\mathcal Q$), to whose Levi parts $\l'$
(resp., $\l$) a given highest weight $\lambda$ can be admissibly extended.

\subsection{The cohomological, irreducible, unitary dual of $G'=GL_k(\H)$}
Now, we may state the following theorem settling the non-split case.

\begin{thm}\label{prop:irrunitG'}
Let $G'=GL_k(\H)$, $k\geq 1$ and $E=E_\lambda$ a highest weight representation. The following holds:
\begin{enumerate}
\item To each $\q'=\q'_{\underline k}\in\mathcal Q'(\lambda)$, there exists a unique irreducible unitary $G'$-representation $A_{\q'}(\lambda)$.
None of these are pairwise isomorphic, i.e., $A_{\q'_{\underline{k}_1}}(\lambda)$$\cong A_{\q'_{\underline{k}_2}}(\lambda)$ if and only if $\q'_{\underline{k}_1}=\q'_{\underline{k}_2}$.
\item Each $A_{\q'}(\lambda)$ is cohomological with respect to $E_\lambda$, i.e.,
$$H^q(\g',K'^\circ,A_{\q'}(\lambda)\otimes E_\lambda)\neq 0$$
for some degree $q\geq 0$, and all irreducible unitary $G'$-modules which are cohomological with respect to $E_\lambda$ are obtained in this way.
\item The Poincar\'e-polynomial of the cohomology ring $H^*(\g',K'^\circ,A_{\q'(\lambda)}\otimes E_\lambda)$ is independent of $\lambda$ and given by
$$P(\underline k,X)=\frac{X^{\dim(\u'_{\underline k}\cap\p'_\C)}}{1+X}\prod_{i=1}^r\prod_{j=1}^{k_i}(1+X^{2j-1})\prod_{j=1}^{k_0}(1+X^{4j-3}).$$
\item Among all $A_{\q'}(\lambda)$, $\q'=\q'_{\underline k}\in\mathcal Q'(\lambda)$, the representation indexed by $\q'_{\underline k}$ with $\underline k=[0,1,1,...,1]$ is the only tempered representation.
\end{enumerate}
\end{thm}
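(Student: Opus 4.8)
The plan is to obtain all four assertions from the Vogan--Zuckerman machinery \cite{vozu}, specialized to the $\theta'$-stable parabolic subalgebras $\q'_{\underline k}=\l'_{\underline k}\oplus\u'_{\underline k}$ classified in \S\ref{sect:parabg'}. Fix a highest weight $\lambda$ admissible on $\l'_{\underline k}$ (so that $E_\lambda$ is self-dual, by \eqref{eq:extending}). Cohomological induction produces the module $A_{\q'_{\underline k}}(\lambda)$, and \cite{vozu} yields at one stroke that it is irreducible and unitary, has infinitesimal character $\lambda+\rho$, is the unique irreducible $(\g',K'^\circ)$-module with a prescribed lowest $K'$-type occurring with multiplicity one, and that
$$H^{R+j}(\g',K'^\circ,A_{\q'_{\underline k}}(\lambda)\otimes E_\lambda)\cong\operatorname{Hom}_{L'\cap K'^\circ}\!\big(\Lambda^{j}(\l'^\R_{\underline k}\cap\p'),\C\big),\qquad R=\dim(\u'_{\underline k}\cap\p'_\C).$$
Taking $j=0$ gives $H^R\cong\C\neq0$, which is (2), while the statement that every irreducible unitary cohomological module of $G'$ is of this form is exactly the classification theorem of \cite{vozu}. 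For ``none are pairwise isomorphic'' in (1) I would invoke Salamanca-Riba's criterion \cite{salam}: $A_{\q'_{\underline k_1}}(\lambda)\cong A_{\q'_{\underline k_2}}(\lambda)$ if and only if $\Delta(\u'_{\underline k_1}\cap\p'_\C,\t'_\C)=\Delta(\u'_{\underline k_2}\cap\p'_\C,\t'_\C)$, i.e.\ if and only if $\q'_{\underline k_1}\sim\q'_{\underline k_2}$; since $\mathcal Q'$ contains exactly one representative of each $\sim$-class, distinct members of $\mathcal Q'(\lambda)$ give non-isomorphic representations.

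For (3) the right-hand side of the cohomology formula is manifestly independent of $\lambda$, so it suffices to compute the graded dimension of $\operatorname{Hom}_{L'\cap K'^\circ}(\Lambda^{\bullet}(\l'^\R_{\underline k}\cap\p'),\C)$. As \eqref{eq:lreal'} is a decomposition of Lie algebras and $L'\cap K'^\circ$ is the product of the maximal compacts of the blocks together with the one-dimensional split centre $\R I$ of $G'$ (sitting diagonally), the K\"unneth rule reduces this to relative Lie algebra cohomology of the trivial module over the individual factors: $\gl_{k_i}(\C)$ contributes the real cohomology of the compact dual of its symmetric space, of Poincar\'e polynomial $\prod_{j=1}^{k_i}(1+X^{2j-1})$ ($=H^*(U(k_i))$); $\gl_{k_0}(\H)$ contributes $\prod_{j=1}^{k_0}(1+X^{4j-3})$ (the cohomology of $U(1)\times SU(2k_0)/Sp(k_0)$, the $U(1)$-factor accounting for that block's own split centre); and one then divides by $(1+X)$, because $K'^\circ$ — unlike the product of the blocks' groups — quotients out the single overall split centre $\R I$. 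Multiplying by $X^{R}=X^{\dim(\u'_{\underline k}\cap\p'_\C)}$ yields $P(\underline k,X)$.

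For (4), the final statement: if $k=1$ there is nothing to prove, as $\mathcal Q'$ then consists of the single class $[0,1]$ and $A_{[0,1]}(\lambda)$ is a (tempered) discrete series of $\H^\times$. Suppose $k\geq2$. Since $SL_a(\H)=SU^*(2a)$ has a compact Cartan subgroup only when $a=1$, the group $GL_a(\H)$ has discrete series only for $a=1$; hence the only cuspidal parabolic $F$-subgroup of $G'$ whose Levi supports discrete series is the minimal parabolic $P'_0$, with Levi $(\H^\times)^k$. By the Knapp--Zuckerman description of the tempered dual, every irreducible tempered representation of $G'$ is therefore a constituent of $\operatorname{Ind}_{P'_0}^{G'}(\delta_1\otimes\cdots\otimes\delta_k\otimes e^{\langle i\nu,\,\cdot\,\rangle})$ with each $\delta_i$ a discrete series of $\H^\times$ and $\nu$ real on $\a'$; moreover, since $\H^\times$ is connected and hence has no character of finite order, the relevant $R$-group is trivial and this induced module is already irreducible. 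If such a representation is cohomological with respect to $E_\lambda$, its infinitesimal character equals $\lambda+\rho$, and as $\lambda+\rho$ is regular this pins down $(\delta_1,\dots,\delta_k;\nu)$ up to a permutation of the $\delta_i$ and a Weyl-translate of $\nu$ — data that does not change the induced representation. Thus, up to isomorphism, $G'$ has at most one tempered cohomological representation attached to $E_\lambda$, and it is tempered by construction. Finally I would identify it with $A_{\q'_{\underline k}}(\lambda)$ for $\underline k=[0,1,1,\dots,1]$: for that class $\l'^\R_{\underline k}=\bigoplus_{i=1}^{k}\gl_1(\C)\cong\t'\oplus\a'$ is the fundamental Cartan subalgebra, and cohomological induction from a unitary character of the fundamental Cartan is precisely this irreducible minimal principal series. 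By contrast, any other $\q'_{\underline k}\in\mathcal Q'$ has $k_0\geq2$ or some $k_i\geq2$, so $\l'^\R_{\underline k}$ has noncompact semisimple part ($\mathfrak{su}^*(2k_0)$, resp.\ $\mathfrak{sl}_{k_i}(\C)$), and then $A_{\q'_{\underline k}}(\lambda)$ is a non-tempered Langlands quotient; this gives the uniqueness asserted in (4).

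The two steps I expect to demand real care are the bookkeeping of the single one-dimensional split centre in (3) — keeping apart the $\R_+$'s intrinsic to the blocks $GL_{k_i}(\C),GL_{k_0}(\H)$ from the one $\R_+\subset K'^\circ$ that is quotiented globally, which is precisely what produces the factor $\frac{1}{1+X}$ — and, in (4), the irreducibility of $\operatorname{Ind}_{P'_0}^{G'}(\delta_1\otimes\cdots\otimes\delta_k\otimes e^{\langle i\nu,\,\cdot\,\rangle})$, i.e.\ ruling out the reducibility which in the split group $GL_n(\R)$ is responsible for the $\sgn^\varepsilon$-twists and would otherwise spoil the uniqueness. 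Both ultimately rest on features peculiar to the quaternionic case: $\R I$ is one-dimensional, and $\H^\times$ is connected.
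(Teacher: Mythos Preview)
Your argument is correct and follows essentially the same route as the paper: existence, cohomology, and the classification of cohomological unitaries from \cite{vozu}; non-isomorphism from Salamanca-Riba's criterion \cite{salam}; and the Poincar\'e polynomial via the isomorphism $H^{q}(\g',K'^\circ,A_{\q'_{\underline k}}(\lambda)\otimes E_\lambda)\cong H^{q-\dim(\u'_{\underline k}\cap\p'_\C)}(\l'^\R_{\underline k},\l'^\R_{\underline k}\cap\k',\C)$ together with K\"unneth. Two small remarks. First, unitarity of $A_{\q'}(\lambda)$ is not in \cite{vozu} ``at one stroke''; it is the separate theorem of Vogan \cite{vogan}, Thm.\ 1.3, and you should cite it as such. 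Second, for (4) the paper simply invokes the temperedness criterion from \cite{vozu}, p.\ 58 (namely that $A_{\q'}(\lambda)$ is tempered exactly when the semisimple part of $\l'^\R_{\underline k}$ is compact, which for \eqref{eq:lreal'} forces $k_0\leq 1$ and each $k_i=1$, hence $\underline k=[0,1,\dots,1]$ in $\mathcal Q'$). Your longer argument via the cuspidal parabolics of $GL_k(\H)$, Knapp--Zuckerman, and the triviality of the $R$-group is also valid and has the virtue of being self-contained, but it is more than is needed: the Vogan--Zuckerman criterion already gives the answer by inspection of the Levi.
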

\begin{proof}
(1): The existence of the irreducible $G'$-module $A_{\q'}(\lambda)$ is proved by Vogan--Zuckerman in \cite{vozu}, Thm.\ 5.3 together with (5.1). Its unitarity is shown by Vogan in \cite{vogan}, Thm.\ 1.3, while their pairwise inequivalence is a consequence of the work of Salamanca-Riba \cite{salam}, Prop.\ 1.11.\\
(2): This is \cite{vozu}, Thm.\ 5.5 and 5.6.\\
(3): By \cite{vozu}, Thm.\ 5.5 we know that
$$H^q(\g',K'^\circ,A_{\q'_{\underline k}}(\lambda)\otimes E_\lambda)\cong H^{q-\dim(\u'_{\underline k}\cap\p'_\C)}(\l'^\R_{\underline k},\l'^\R_{\underline k}\cap(\R\oplus\mathfrak{sp}(k)),\C),$$
which is already independent of $\lambda$ but only dependent on the partition $\underline k$ determining $\q'=\q'_{\underline k}$. Using the concrete form of $\l'^\R_{\underline k}$, given in \eqref{eq:lreal'} and the fact that $(\g',K'^\circ)$-cohomology satisfies the K\"unneth-rule, it is now an easy exercise to calculate the Poincar\'e-polynomial of the cohomology ring $H^*(\g',K'^\circ,A_{\q'}(\lambda)\otimes E_\lambda)$.\\
(4): Taking into account our description of the real part of the Levi subalgebra of a $\theta'$-stable parabolic $\q'_{\underline k}\in\mathcal Q'(\lambda)$ given in \eqref{eq:lreal'}, this follows from \cite{vozu}, last paragraph on p. 58.
\end{proof}

\begin{rem}
It is to avoid redundancies in the list of $A_{\q'}(\lambda)$-modules, that we introduced the equivalence relation ``$\sim$'' on $\mathfrak Q'/K'$. If there is a non-trivial equivalence $\q'_{\underline{k}_1}\sim\q'_{\underline{k}_2}$, i.e., $\underline k_1=[0,1,k_2,...,k_r]$ and $\underline k_2=[1,k_2,...,k_r]$, and $\lambda$ can be extended to an admissible character of $\l'_{\underline k_2}$, too, then $A_{\q'_{\underline k_2}}(\lambda)$ exists. However, it such a case
$$A_{\q'_{\underline{k}_1}}(\lambda)\cong A_{\q'_{\underline{k}_2}}(\lambda),$$
cf. \cite{salam}, Prop.\ 1.11.
\end{rem}

Thm.\ \ref{prop:irrunitG'} motivates the following definition.

\begin{defn}
Let $E=E_\mu$ be a highest weight representation as in Section \ref{sect:highweights}, whose highest weight with respect to $\h'_\C$ is $\lambda$. We denote
$$\textrm{Coh}_\mu(G'):=\{A_{\q'}(\lambda), \q'=\q'_{\underline k}\in\mathcal Q'(\lambda)\}.$$
According to Thm.\ \ref{prop:irrunitG'}, Coh$_\mu(G')$ is the set of all irreducible, unitary $G'$-representations, which have non-zero $(\g',K'^\circ)$-cohomology with respect to $E$.
\end{defn}

\subsection{The cohomological, irreducible, unitary dual of $G=GL_n(\R)$}
The split case is slightly more complicated, as $G=GL_n(\R)$ is not connected. Still, we obtain the following theorem.

\begin{thm}\label{prop:irrunitG}
Let $G=GL_n(\R)$, $n=2k\geq 2$ and $E=E_\lambda$ a highest weight representation. The following holds:
\begin{enumerate}
\item To each $\q=\q_{\underline n}\in\mathcal Q(\lambda)$, there exists a unique irreducible unitary $G$-representation $A_{\q}(\lambda)$. None of these are pairwise isomorphic, i.e., $A_{\q_{\underline{n}_1}}(\lambda)$$\cong A_{\q_{\underline{n}_2}}(\lambda)$ if and only if $\q_{\underline{n}_1}=\q_{\underline{n}_2}$.
\item Each $A_{\q}(\lambda)$ is cohomological with respect to $E_\lambda$, i.e.,
$$H^q(\g,K^\circ,A_{\q}(\lambda)\otimes E_\lambda)\neq 0$$
for some degree $q\geq 0$. Conversely, all irreducible unitary $G$-modules which are cohomological with respect to $E_\lambda$ are of the form
$$A_{\q}(\lambda)\otimes\sgn^\varepsilon, \quad\quad\q=\q_{\underline n}\in\mathcal Q(\lambda), \varepsilon\in\{0,1\}.$$
\item Among all $A_{\q}(\lambda)$, $\q=\q_{\underline n}\in\mathcal Q(\lambda)$, the representation indexed by $\q_{\underline n}$ with $\underline n=[0,2,2,...,2]$ is the only tempered representation.
\end{enumerate}
\end{thm}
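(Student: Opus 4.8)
The plan is to mirror closely the proof of Theorem \ref{prop:irrunitG'} for $GL_k(\H)$, importing the Vogan--Zuckerman machinery \cite{vozu}, Section 5, and then account for the extra subtlety caused by $GL_n(\R)$ being disconnected. First I would note that, by the discussion in Section \ref{sect:L&L'}, an admissible $\lambda$ on $\l^\R_{\underline n}$ exists precisely when $\lambda|_{[\l^\R,\l^\R]}=0$ and $\lambda|_\a=0$, i.e. exactly the condition defining $\mathcal Q(\lambda)$; for such $\q=\q_{\underline n}$ the construction of Vogan--Zuckerman (\cite{vozu}, Thm.\ 5.3 together with (5.1)) produces an irreducible $(\g,K^\circ)$-module $A_\q(\lambda)$, and its unitarity follows from Vogan \cite{vogan}, Thm.\ 1.3. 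This gives the existence part of (1). For the pairwise inequivalence I would invoke Salamanca-Riba \cite{salam}, Prop.\ 1.11, noting that --- unlike in the quaternionic case --- here the relation ``$\sim$'' degenerates to equality (as established at the end of Section \ref{sect:parabg}, because every root of $\g_\C$ already occurs as a non-compact root), so distinct partitions $\underline n$ genuinely give distinct $K^\circ$-conjugacy classes and hence non-isomorphic modules, with no redundancy to quotient out.

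For (2), the fact that each $A_\q(\lambda)$ is cohomological with respect to $E_\lambda$ is \cite{vozu}, Thm.\ 5.5 and 5.6, with the explicit shift $H^q(\g,K^\circ,A_{\q_{\underline n}}(\lambda)\otimes E_\lambda)\cong H^{q-\dim(\u_{\underline n}\cap\p_\C)}(\l^\R_{\underline n},\l^\R_{\underline n}\cap\k,\C)\neq 0$. The real issue, and the place where the split case genuinely differs, is the converse: classifying \emph{all} irreducible unitary cohomological $G$-modules. Here I would argue that $K^\circ_\infty\subsetneq K_\infty$ with index two, the nontrivial coset represented by an orientation-reversing element whose action on representations is twisting by $\sgn\circ\det$. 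Given an irreducible unitary $\pi$ of $GL_n(\R)$ with $H^q(\g,K^\circ,\pi\otimes E_\lambda)\neq 0$, its restriction to the connected group $GL_n(\R)^\circ$ (equivalently, the underlying $(\g,K^\circ)$-module) is cohomological, hence by Vogan--Zuckerman applied to the connected group must be one of the $A_\q(\lambda)$; since these exhaust the connected-group picture, $\pi$ itself is either $A_\q(\lambda)$ or its $\sgn$-twist $A_\q(\lambda)\otimes\sgn^\varepsilon$. The main obstacle is being careful that the Vogan--Zuckerman classification as usually stated is for the connected (or at least for $(\g,K)$ with $K$ possibly disconnected but with a fixed choice); I would either cite the form of their theorem valid for our $K$, or reduce to $K^\circ$ and then extend across the two components by an explicit $\sgn$-twist argument, checking that $A_\q(\lambda)$ and $A_\q(\lambda)\otimes\sgn$ are the two irreducible unitary modules of $GL_n(\R)$ restricting to the given $(\g,K^\circ)$-module. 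One should also observe that $A_\q(\lambda)$ and $A_\q(\lambda)\otimes\sgn$ may or may not be isomorphic depending on whether the $(\g,K^\circ)$-module extends in two ways or the extension is unique; this does not affect the statement of (2), which lists the cohomological modules as $A_\q(\lambda)\otimes\sgn^\varepsilon$ with $\varepsilon\in\{0,1\}$, allowing for repetition.

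For (3), I would argue exactly as in part (4) of Theorem \ref{prop:irrunitG'}: by \cite{vozu}, the representation $A_\q(\lambda)$ is tempered if and only if the real part $\l^\R_{\underline n}$ of the Levi is, up to the central factor, a product of the ``smallest'' pieces --- here $\l^\R_{\underline n}\cong\g\l_{n_0}(\R)\oplus\bigoplus_{i=1}^r\g\l_{k_i}(\C)$ from \eqref{eq:lreal} --- and the tempered condition, following \cite{vozu}, last paragraph on p.\ 58, forces each $k_i=1$ and $n_0=0$, i.e.\ $\underline n=[0,2,2,\ldots,2]$. (The discrete-series-like building blocks on $GL_2(\R)$ and $GL_1(\C)$ are tempered, whereas any $\g\l_{k_i}(\C)$ with $k_i\geq 2$ contributes a non-tempered $A_\q$-type factor, and a nonzero $\g\l_{n_0}(\R)$-factor with $n_0\neq 0$ forces the trivial, non-tempered, piece since $n_0$ must be even and $GL_{n_0}(\R)$ with $n_0\geq 2$ has no cohomological tempered representation once $n_0>2$, while $n_0=2$ is absorbed into one of the $k_i=1$ blocks via the equivalence already used.) The twist by $\sgn^\varepsilon$ does not affect temperedness, so the tempered members of $\mathrm{Coh}_\mu(G)$ are exactly $A_{\q_{[0,2,\ldots,2]}}(\lambda)$ and its $\sgn$-twist; I would phrase the conclusion for the untwisted family as in the statement. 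I expect parts (1) and (3) to be essentially bookkeeping on top of \cite{vozu}, \cite{vogan}, \cite{salam}, and the genuine content to be the disconnectedness argument in (2).
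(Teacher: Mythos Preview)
Your proposal is correct and follows essentially the same route as the paper's own proof: for (1) you invoke \cite{vozu} Thm.\ 5.3 with (5.1), \cite{vogan} Thm.\ 1.3, and \cite{salam} Prop.\ 1.11; for (2) you invoke \cite{vozu} Thm.\ 5.5--5.6 and account for disconnectedness via the $\sgn$-twist; for (3) you invoke \cite{vozu}, last paragraph on p.\ 58 together with the description \eqref{eq:lreal} of $\l^\R_{\underline n}$ --- exactly as in the paper, which is rather terser than you are.

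One small correction to your parenthetical in (3): the remark that ``$n_0=2$ is absorbed into one of the $k_i=1$ blocks via the equivalence already used'' is mistaken. As you yourself emphasized in (1), in the split case the relation $\sim$ on $\mathfrak Q/K$ degenerates to equality, so the partition $[2,2,\ldots,2]$ is \emph{not} identified with $[0,2,2,\ldots,2]$; these give genuinely different (and non-isomorphic) $A_\q(\lambda)$'s. The correct reason $n_0=0$ is forced is simply the Vogan--Zuckerman temperedness criterion itself: any $\gl_{n_0}(\R)$-block with $n_0\geq 2$ contributes a non-tempered factor. This does not affect the validity of your argument, since your actual citation to \cite{vozu} p.\ 58 already delivers the conclusion; just drop the erroneous aside.
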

\begin{proof}
(1): The existence of the irreducible $G'$-module $A_{\q'}(\lambda)$ is again a consequence \cite{vozu}, Thm.\ 5.3 together with (5.1). Its unitarity is shown in \cite{vogan}, Thm.\ 1.3, while their pairwise inequivalence is a consequence of \cite{salam}, Prop.\ 1.11.\\
(2): This is \cite{vozu}, Thm.\ 5.5 and 5.6., where the twist by $\sgn^\varepsilon$ takes account of the disconnectedness of $G$.\\
(3): Recalling our concrete description of the real part of the Levi subalgebra of a $\theta$-stable parabolic $\q_{\underline n}\in\mathcal Q(\lambda)$ given in \eqref{eq:lreal}, this follows again from \cite{vozu}, last paragraph on p. 58.
\end{proof}

\begin{defn}
\label{def:lambda-mu}
Let $E=E_\mu$ be a highest weight representation as in Section \ref{sect:highweights}, whose highest weight with respect to $\h_\C$ is $\lambda$. We denote
$$\textrm{Coh}_\mu(G):=\{A_{\q}(\lambda)\otimes\sgn^\varepsilon, \q=\q_{\underline n}\in\mathcal Q(\lambda),\varepsilon\in\{0,1\}\}.$$
According to Thm.\ \ref{prop:irrunitG}, Coh$_\mu(G)$ is the set of all irreducible, unitary $G$-representations, which have non-zero $(\g,K^\circ)$-cohomology with respect to $E$.
\end{defn}

\section{The local and global Jacquet-Langlands transfer and its interplay with cohomology}
\label{sect:local-global-jl}

\subsection{The local Jacquet-Langlands map}\label{sect:localJL}
In \cite{jaclan} Jacquet--Langlands established a bijection between irreducible representations of $G'_v$ and irreducible, admissible square-integrable representations of $G_v$ in the special case when $m=1$ and $d=2$, i.e., when $G'$ is the group of invertible elements in a quaternion division algebra. (See also Gelbart--Jacquet \cite{gelbjacquet} Thm.\ 8.1 and its generalization of Rogwaski, \cite{rogaw}.) This bijection was extended to a bijection between irreducible, admissible square-integrable representations of $G'_v$ and irreducible, admissible square-integrable representations of $G_v$ for any $m$ and $d$ by Deligne--Kazhdan--Vign\'eras in \cite{dkv} -- the local Jacquet-Langlands correspondence. Finally, Badulescu in \cite{ioan}, p. 406 together with Badulescu--Renard in \cite{ioanrenard}, sect. 13, ``thickened'' the Jacquet-Langlands correspondence to a map
$$|LJ|_v: \tilde U_{cp}(G_v)\rightarrow \tilde U(G'_v)$$
from the family $\tilde U_{cp}(G_v)$ of so-called {\it $d_v$-compatible} irreducible unitary representations of $G_v$ to the family $\tilde U(G'_v)$ of irreducible unitary representations of $G'_v$. (For the notion of $d_v$-compatibility we refer to \cite{ioan}, sect. 15) This map is neither injective nor surjective in general, but restricts to the Jacquet-Langlands correspondence of \cite{dkv} (and \cite{jaclan}) on unitary square-integrable representations. If $d_v=1$, i.e., $G'_v=G_v$ the map $|LJ|_v$ is the identity. Furthermore, $|LJ|_v$ commutes with parabolic induction and forming tensor products. We denote by $U_{cp}(G_v)$ the set of all $\Pi_v=\tilde\Pi_v\otimes|\!\det\!|^s$, with $s\in\C$ and $\tilde\Pi_v\in\tilde U_{cp}(G_v)$, and define
$$|LJ|_v(\Pi_v)=|LJ|_v(\tilde\Pi_v)\otimes\textrm{det}'^s.$$
The analogous notation $U(G'_v)$ is used in the non-split case, i.e., $\Pi'_v\in U(G'_v)$ if and only if $\Pi'_v=\tilde\Pi'_v\otimes|\!\det'\!|^s$ with $\tilde\Pi'_v\in\tilde U(G'_v)$. Furthermore we set for $\Pi_\infty=\bigotimes_{v\in V_\infty}\Pi_v\in U_{cp}(G_\infty)=\bigotimes_{v\in V_\infty}U_{cp}(G_v)$
$$|LJ|_\infty(\Pi_\infty)=\bigotimes_{v\in V_\infty}|LJ|_v(\Pi_v).$$

\subsection{}\label{sect:easylemma}
In the sequel, we will need some particular knowledge about the easiest, non-trivial case of $|LJ|_v$, namely if $v\in V_\infty$ is a non-split place and $m=1$ and $d=2$.
For any integer $l\geq 1$ and $u\in\C$ denote by
$$F(u,l):=\textrm{Sym}^{l-1}\C^2\otimes\textrm{det}'^{-u/2},$$
where $\textrm{Sym}^{l-1}\C^2$ is the unique irreducible, unitary representation of $SL_1(\H)$ of dimension $l$. The representations $F(u,l)$ exhaust all irreducible, finite-dimensional representations of $GL_1(\H)$.

Furthermore, we define
$$D(u,l):=D(l)\otimes|\textrm{det}|^{-u/2},$$
where $D(l)$ the unique irreducible, unitary, discrete series representation of $SL_2^\pm(\R)$ of lowest (non-negative) $O(2)$-type $l+1$.
Then we obtain

\begin{lem}\label{lem:localJL}
Let $v\in V_\infty$ be a non-split place of a quaternion division algebra $D$ and $m=1$.
The family $U_{cp}(GL_2(\R))$ consists precisely of the representations
\begin{itemize}
\item[(i)] $\textrm{\emph{sgn}}^\varepsilon|\!\det\!|^s$, $\varepsilon\in\{0,1\}$ and $s\in\C$
\item[(ii)] $D(u,l)$ for some integer $l\geq 1$ and $u\in\C$.
\end{itemize}
while the family $U(GL_1(\H))$ consists precisely of the finite-dimensional representations
\begin{itemize}
\item[(i')] $F(u',l')$ for some integer $l'\geq 1$ and $u'\in\C$.
\end{itemize}
The local map $|LJ|_v$ is given by
$$|LJ|_v(\Pi_v)=\left\{
\begin{array}{ll}
\textrm{\emph{det}}'^s & \textrm{ if $\Pi_v=\textrm{\emph{sgn}}^\varepsilon|\det|^s$}\\
F(u,l) & \textrm{ if $\Pi_v=D(u,l)$.}
\end{array}
\right.$$
In particular, in this special case, $|LJ|_v$ is surjective but not injective.
\end{lem}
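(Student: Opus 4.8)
The plan is to analyze the three claims of the lemma separately: the classification of $U_{cp}(GL_2(\R))$, the classification of $U(GL_1(\H))$, and the explicit description of $|LJ|_v$ on these families.

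\textbf{Step 1: Classifying $U(GL_1(\H))$.} Since $GL_1(\H) = \H^\times$ has a one-dimensional split center and the derived group $SL_1(\H)$ is compact (isomorphic to $SU(2)$), every irreducible unitary representation is a tensor product of an irreducible representation of $SL_1(\H)$ with a character of the center. The irreducible representations of $SU(2)$ are exactly the $\textrm{Sym}^{l-1}\C^2$, all of which are finite-dimensional and unitary; twisting by $\det'^{-u/2}$ for $u\in\C$ exhausts the characters of the center. This gives precisely the family $F(u,l)$, proving (i'). This step is essentially representation theory of compact groups and should be routine.

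\textbf{Step 2: Classifying $U_{cp}(GL_2(\R))$.} Here I would recall that $\tilde U_{cp}(G_v)$ is by definition the set of $d_v$-compatible irreducible unitary representations, with $d_v = 2$. I expect to invoke Badulescu's characterization (\cite{ioan}, sect. 15) of $2$-compatibility: a representation is $2$-compatible essentially when its Langlands parameter data is compatible with transfer to $GL_1(\H)$, which for $GL_2(\R)$ forces it to be either a (twist of a) discrete series representation $D(l)$ or a (twist of a) character of the form $\textrm{sgn}^\varepsilon$. The key point is that the generic unitary principal series of $GL_2(\R)$ are \emph{not} $2$-compatible because they do not arise as lifts; only the ``size-$2$'' building blocks survive. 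After allowing the twist by $|\det|^s$ to pass from $\tilde U_{cp}$ to $U_{cp}$, we obtain exactly families (i) and (ii). I anticipate this to be the main obstacle: one must carefully match the abstract definition of $d_v$-compatibility with the concrete list, and cite the precise results of Badulescu--Renard governing which unitary representations of $GL_2(\R)$ are $2$-compatible.

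\textbf{Step 3: Computing $|LJ|_v$ and concluding.} Once both lists are in hand, I would determine $|LJ|_v$ on each type. On $D(l)$ the map $|LJ|_v$ must restrict to the classical Jacquet--Langlands correspondence of \cite{jaclan}, \cite{dkv} on square-integrable representations: this sends $D(l)$ to $\textrm{Sym}^{l-1}\C^2$, matching lowest $O(2)$-type $l+1$ with the dimension-$l$ representation of $SL_1(\H)$ via the standard normalization. Extending compatibly by twists by $|\det|^{-u/2} \mapsto \det'^{-u/2}$ gives $|LJ|_v(D(u,l)) = F(u,l)$. For the characters $\textrm{sgn}^\varepsilon|\det|^s$: since $|LJ|_v$ commutes with parabolic induction and forming tensor products and is the identity when $d_v=1$, the one-dimensional representation $\textrm{sgn}^\varepsilon|\det|^s$ of $GL_2(\R)$ is the (Langlands quotient associated to the) induction of a character, and it transfers to the corresponding character $\det'^s$ of $GL_1(\H)$; the sign $\textrm{sgn}^\varepsilon$ is killed because $GL_1(\H)$ is connected and has no analogue of the sign character. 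Finally, surjectivity is immediate since the lists (i'), (i), (ii) together with the displayed formula show every $F(u',l')$ is hit (it is the image of $D(u',l')$, or of a character when $l'=1$, noting $F(u,1) = \det'^{-u/2}$), and non-injectivity follows from the two-to-one collapse $\textrm{sgn}^0|\det|^s, \textrm{sgn}^1|\det|^s \mapsto \det'^s$. These last verifications are bookkeeping; the structural input is entirely in Step 2.
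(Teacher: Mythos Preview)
Your proposal is correct and considerably more detailed than the paper's own proof, which consists of a single sentence: the paper simply declares the statement well-known and refers to Badulescu--Renard \cite{ioanrenard}, Thm.~13.8, for the description of $|LJ|_v$ in this case. Your Steps~1--3 essentially unpack what that citation (together with the classical Jacquet--Langlands correspondence) contains; in particular your identification of Step~2 as the crux is accurate, and indeed the paper outsources precisely this point to \cite{ioanrenard}. One small caution for Step~2: when you exclude the ``generic unitary principal series'' from $U_{cp}(GL_2(\R))$, make sure your argument also covers the complementary series, which are likewise non-$2$-compatible; the cleanest route is exactly the one you indicate, namely to invoke the explicit results of \cite{ioanrenard}, sect.~13, rather than to argue from first principles.
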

\begin{proof}
This is well-known, respectively follows from the description of the local map $|LJ|_v$ in this case, cf. Badulescu--Renard \cite{ioanrenard}, Thm 13.8.
\end{proof}

\subsection{Local Jacquet-Langlands and cohomology}
We are now ready to prove the first main result of this paper. It compares the cohomological, irreducible, unitary duals of $G'_v$ and $G_v$, $v\in V_\infty$ non-split, via the local Jacquet-Langlands map $|LJ|_v$ and will be of particular importance in the following sections. Nevertheless, we believe that it is interesting in its own right. It fits very well with the philosophy to use functoriality in order to get cohomological automorphic representations. The interested reader may find a survey on this latter topic by Raghuram--Shahidi in \cite{ragsha}, Section 5.2.

\begin{thm}\label{thm:irrunitJL}
Let $v\in V_\infty$ be a non-split place, so $G'_v=GL_k(\H)$, $k\geq 1$ and $G_v=GL_n(\R)$, $n=2k$. Furthermore, let $E=E_{\mu_v}$ be any highest weight representation as in Section \ref{sect:highweights}. Then we get:\\
The local Jacquet-Langlands map $|LJ|_v$ defines a surjection
$$\xymatrix{\textrm{\emph{Coh}}_{\mu_v}(G_v)\ar@{->>}[rr]^{|LJ|_v}  & & \textrm{\emph{Coh}}_{\mu_v}(G'_v)}$$
given explicitly by
$$|LJ|_v(A_{\q_{\underline n}}(\lambda)\otimes\sgn^\varepsilon)=A_{\q'_{\underline k}}(\lambda).$$
Here $\underline n=[n_0,n_1,...,n_r]$ and $\underline k=[k_0,k_1,...,k_r]$ with $n_i=2k_i$, $0\leq i\leq r$.\\
In particular, $|LJ|_v$ maps tempered cohomological representations to tempered cohomological representations.
\end{thm}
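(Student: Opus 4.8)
The plan is to prove Theorem \ref{thm:irrunitJL} by reducing the surjectivity and the explicit formula to the rank-one building blocks and then using the multiplicativity of $|LJ|_v$ with respect to parabolic induction. First I would recall the Vogan--Zuckerman description: a cohomological $A_{\q_{\underline n}}(\lambda)$ of $GL_n(\R)$ with $\underline n = [n_0,n_1,\dots,n_r]$, $n_i = 2k_i$, is a Langlands-type quotient of a parabolically induced representation from the Levi $L^{\R}_{\underline n}\cong \gl_{n_0}(\R)\oplus\bigoplus_{i=1}^r\gl_{k_i}(\C)$, where the $\gl_{n_0}(\R)$-block contributes the one-dimensional (sign-twisted, determinant-twisted) piece and each $\gl_{k_i}(\C)$-block contributes a discrete-series-type piece; concretely, unwinding this for $GL_2(\R)$ against a $GL_1(\C)$-factor recovers precisely the $D(u,l)$ of Lemma \ref{lem:localJL}. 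Correspondingly $A_{\q'_{\underline k}}(\lambda)$ of $GL_k(\H)$, $\underline k = [k_0,k_1,\dots,k_r]$, is built by parabolic induction from $L'^{\R}_{\underline k}\cong\gl_{k_0}(\H)\oplus\bigoplus_{i=1}^r\gl_{k_i}(\C)$, where the $\gl_{k_0}(\H)$-block contributes a finite-dimensional piece and the $\gl_{k_i}(\C)$-blocks contribute the same complex pieces as on the split side. The admissibility condition \eqref{eq:extending} on $\lambda$ is the same for $\q_{\underline n}\in\mathcal Q(\lambda)$ and its partner $\q'_{\underline k}\in\mathcal Q'(\lambda)$, so $\mathcal Q(\lambda)$ and $\mathcal Q'(\lambda)$ are in natural bijection via $n_i\leftrightarrow 2k_i$; this is the content of the identification of the parametrizing partitions in Lemmas \ref{lem:g'1} and \ref{lem:g1}.

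Next I would compute $|LJ|_v$ on each block. Since $|LJ|_v$ commutes with parabolic induction and with tensor products (stated in \S\ref{sect:localJL}), and since on the complex factors $GL_{k_i}(\C)$ we have $G'_v = G_v$ so $|LJ|_v$ is the identity, the whole computation reduces to the $GL_{n_0}(\R)\to GL_{k_0}(\H)$ block with $n_0 = 2k_0$. For $k_0 = 1$, i.e.\ $GL_2(\R)\to GL_1(\H)$, Lemma \ref{lem:localJL} gives exactly $|LJ|_v(D(u,l)) = F(u,l)$ and $|LJ|_v(\sgn^\varepsilon|\det|^s) = \det'^s$. For general $k_0$ one should use that the $\gl_{k_0}(\H)$-block of $A_{\q'_{\underline k}}(\lambda)$ is the finite-dimensional representation of $GL_{k_0}(\H)$ with the appropriate highest weight (this is forced by admissibility of $\lambda$ on $\l'^{\R}$, which kills the derived part), and that the matching $\gl_{n_0}(\R)$-block of $A_{\q_{\underline n}}(\lambda)$ is the Speh-type representation $A_{\q_{[n_0]}}(\lambda)$ of $GL_{n_0}(\R)$ which under the Badulescu--Renard map transfers to precisely this finite-dimensional representation of $GL_{k_0}(\H)$. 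Here I would invoke the known behavior of the global and local Jacquet--Langlands correspondence on Speh representations — $|LJ|_v$ of the Speh representation $u(\delta,k_0)$ built from a discrete series $\delta$ of $GL_2(\R)$ is the corresponding one-dimensional-by-construction piece of $GL_{k_0}(\H)$, namely the finite-dimensional $F(u,l)$-type representation — citing Badulescu--Renard \cite{ioanrenard}. Putting the blocks back together through the induction-compatibility yields $|LJ|_v(A_{\q_{\underline n}}(\lambda)\otimes\sgn^\varepsilon) = A_{\q'_{\underline k}}(\lambda)$, with the $\sgn^\varepsilon$ on the $GL_n(\R)$-side collapsing because $GL_{n_0}(\R)$ already contains $\sgn$-twists in its $d_v$-compatible list and $|LJ|_v$ identifies $\sgn^\varepsilon|\det|^s$ with $\det'^s$ independently of $\varepsilon$.

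Surjectivity then follows immediately: given any $A_{\q'_{\underline k}}(\lambda)\in\mathrm{Coh}_{\mu_v}(G'_v)$, take the partition $\underline n = [2k_0,2k_1,\dots,2k_r]$, which lies in $\mathcal Q(\lambda)$ by the admissibility bijection above; then $A_{\q_{\underline n}}(\lambda)\in\mathrm{Coh}_{\mu_v}(G_v)$ maps onto it. I would also remark that $|LJ|_v$ is not injective in general — the fibers are read off from the partition bookkeeping, e.g.\ the two $\sgn$-twists always collapse — which matches the rank-one picture of Lemma \ref{lem:localJL}. Finally, the tempered statement is read off from parts (4) of Theorem \ref{prop:irrunitG'} and (3) of Theorem \ref{prop:irrunitG}: the unique tempered member of $\mathrm{Coh}_{\mu_v}(G_v)$ is $A_{\q_{\underline n}}(\lambda)$ with $\underline n = [0,2,2,\dots,2]$, which under $n_i = 2k_i$ corresponds to $\underline k = [0,1,1,\dots,1]$, the unique tempered member of $\mathrm{Coh}_{\mu_v}(G'_v)$; so $|LJ|_v$ carries it to the tempered representation, as claimed.

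The main obstacle I expect is the general $k_0 > 1$ block computation: verifying that the $GL_{2k_0}(\R)$ Speh-type building block $A_{\q_{[2k_0]}}(\lambda)$ is $d_v$-compatible and that the Badulescu--Renard map sends it to the finite-dimensional representation of $GL_{k_0}(\H)$ of the right highest weight. This requires carefully matching the Tadi\'c/Speh-module description of $A_{\q_{[2k_0]}}(\lambda)$ (as $u(D(u,l), k_0)$ in Zelevinsky/Tadic-style notation) with the explicit formulas for $|LJ|_v$ on such essentially-Speh representations in \cite{ioanrenard}, \S13, and checking that the resulting transfer is exactly the finite-dimensional cohomological representation $A_{\q'_{[k_0]}}(\lambda)$ of $GL_k(\H)$ with $k=k_0$. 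Everything else — the induction-compatibility, the identity on complex factors, the partition bijection — is formal once this base case is settled.
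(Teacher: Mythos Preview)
Your overall plan is sound and shares the paper's two main ingredients: the Vogan--Zuckerman realization of $A_\q(\lambda)$-modules as Langlands quotients, and the compatibility of $|LJ|_v$ with parabolic induction. The difference lies in \emph{how far down} you pass before applying $|LJ|_v$.

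You stop at the $\theta$-stable Levi level $L^{\R}_{\underline n}\cong GL_{n_0}(\R)\times\prod_i GL_{k_i}(\C)$ and $L'^{\R}_{\underline k}\cong GL_{k_0}(\H)\times\prod_i GL_{k_i}(\C)$, observe that the complex factors coincide, and then try to compute $|LJ|_v$ on the remaining $GL_{n_0}(\R)\to GL_{k_0}(\H)$ block directly. This is where your ``main obstacle'' arises: you need the transfer of the Speh-type block $A_{\q_{[n_0]}}(\lambda)$, which you would source from \cite{ioanrenard}. Two remarks here. First, the phrase ``parabolically induced from $L^{\R}_{\underline n}$'' is imprecise: the $GL_{k_i}(\C)$ are Levi factors of the $\theta$-stable parabolic, not of a real parabolic, so the compatibility of $|LJ|_v$ with \emph{real} parabolic induction does not apply at that level without further unwinding. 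Second, and more importantly, the obstacle you identify is avoidable.

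The paper instead uses \cite{vozu}, Thm.\ 6.16 to push the Langlands data all the way down to the maximally split $\theta$-stable Cartan, obtaining $A_{\q_{\underline n}}(\lambda)\cong J(P,\sigma_{\lambda,\underline n},\nu_{\underline n})$ with $M^dA^d\cong\prod_{j=1}^{k-k_0}GL_2(\R)\times\prod_{j=1}^{n_0}GL_1(\R)$ on the split side, and $A_{\q'_{\underline k}}(\lambda)\cong J(P',\sigma'_{\lambda,\underline k},\nu'_{\underline k})$ with $M^dA^d\cong\prod_{j=1}^{k}GL_1(\H)$ on the quaternionic side. The inducing data are computed explicitly: each $GL_2(\R)$-factor carries $D(0,\lambda_j+\rho^{\u'}_j)$, each $GL_1(\R)$-factor carries $\sgn^{e(\lambda)}$, and each $GL_1(\H)$-factor carries $F(0,\lambda_j+\rho^{\u'}_j)$ (trivial for $j>k-k_0$). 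At this level every building block is covered by Lemma \ref{lem:localJL}, and induction by stages together with the compatibility of $|LJ|_v$ with real parabolic induction gives $|LJ|_v(J(P,\sigma_{\lambda,\underline n},\nu_{\underline n}))=J(P',\sigma'_{\lambda,\underline k},\nu'_{\underline k})$ directly. No separate computation of $|LJ|_v$ on Speh representations is needed.

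In short: your route would work once the $k_0>1$ block is handled via \cite{ioanrenard}, but the paper's route sidesteps that step entirely by descending one level further in the Langlands data, so that only the rank-one Lemma \ref{lem:localJL} is required. Your treatment of surjectivity and of the tempered statement is the same as the paper's.
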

\begin{proof}
The proof consists in rewriting the $A_{\q'}(\lambda)$- and $A_\q(\lambda)$-modules as quotients of parabolically induced representations, following Vogan--Zuckerman \cite{vozu}, p. 82, and then applying $|LJ|_v$. In what follows we will freely use the notation of \cite{vozu}, p. 82, which is explained there in details and drop the index ``$v$'', as it is clear that all objects are local ones at a non-split, archimedean place.\\
Let us begin with the non-split case, i.e., with $G'=G'_v$ and a representation $A_{\q'_{\underline k}}(\lambda)$ index by $\underline k=[k_0,k_1,...,k_r]$ as in Section \ref{sect:parabg'}. Recall the Levi subgroup $L'=L'_{\underline k}\cong GL_{k_0}(\H)\times\prod_{i=1}^r GL_{k_i}(\C)$ defined in Section \ref{sect:L&L'}. It contains a maximally split $\theta'$-stable Cartan subgroup $T^+A^d$. In fact, for the Lie algebras we may take
$$\t^+=Lie(T^+)=\t'\quad\quad\textrm{and}\quad\quad\a^d=Lie(A^d)=\a',$$
where $\t'$ and $\a'$ are as in Section \ref{sect:parabg'}. As in \cite{vozu}, p. 82, we let $M^dA^d$ be the Langlands decomposition of the centralizer of $A^d$ in $G'=GL_k(\H)$ and obtain
$$M^dA^d\cong\prod_{j=1}^k GL_1(\H),$$
so $M^d\cong\prod_{j=1}^k SL_1(\H)$ and let $P'$ be a parabolic subgroup containing $M^dA^d$ as Levi factor. It is a minimal parabolic subgroup. The character $\nu^d$ defined on \cite{vozu}, p.82, is given as
\begin{eqnarray*}
\nu^d & = & (\rho_{\gl_{k_0}(\H)},\rho_{\gl_{k_1}(\C)},...,\rho_{\gl_{k_r}(\C)})\\
 & =: & \nu'_{\underline k},
\end{eqnarray*}
where $\rho_\bullet$ is the smallest algebraically integral element in the interior of the dominant Weyl chamber of $\bullet$. We still need to describe the representation $\sigma^d$ of $M^d$, defined abstractly on p. 82 of \cite{vozu}. Therefore, we recall three facts. Firstly, as we may assume that $\lambda$ is self-dual, cf. Section \ref{sect:L&L'}, $\lambda|_{\t^+}=\lambda=(\lambda_1,...,\lambda_k)$ in the coordinates given by the functionals $e_j$, cf. Section \ref{sect:parabg'}. Secondly, $\rho^+$ being defined as the half sum of positive roots of $\t^+=\t'$ in $\m^d\cap\l'$ equals, $\rho^+=(0,...,0,1,...,1)$, the ``$1$'s'' starting from the $k-k_0+1$-st entry. Thirdly, the highest weight of $\sigma^d$ with respect to the system $\Delta(\m^d_\C,\t'_\C)$ equals the Harish-Chandra parameter of $\sigma^d$ subtracted by $\rho_{\m^d}=(1,1,...,1)$. Having collected this information, and writing $\rho(\u')=(\rho^{\u'}_1,...,\rho^{\u'}_k)$ for the half-sum of the roots appearing in $\u'=\u'_{\underline k}$, counted with multiplicity, \cite{vozu}, (6.13) tells us that
$$\sigma^d  \cong  \bigotimes_{j=1}^{k-k_0}\textrm{Sym}^{\lambda_j+\rho^{\u'}_j-1}\C^2\otimes \bigotimes_{j=1}^{k_0}\textrm{Sym}^{\lambda_{j+k-k_0}+\rho^{\u'}_{j+k-k_0}}\C^2.$$
However, by Lem.\ \ref{lem:g'1} $\rho^{\u'}_{j+k-k_0}=0$ for $1\leq j\leq k_0$ and by \eqref{eq:extending} necessarily also $\lambda_{j+k-k_0}=0$ for $1\leq j\leq k_0$. So,
\begin{eqnarray*}
\sigma^d & \cong & \bigotimes_{j=1}^{k-k_0}\textrm{Sym}^{\lambda_j+\rho^{\u'}_j-1}\C^2\\
 & = & \bigotimes_{j=1}^{k-k_0} F(0,\lambda_j+\rho^{\u'}_j)\\
 & =: & \sigma'_{\lambda,\underline k},
\end{eqnarray*}
where $F(0,\lambda_j+\rho^{\u'}_j)$ is the notation used in the previous section. By \cite{vozu}, Thm.\ 6.16, $A_{\q'_{\underline k}}(\lambda)$ is isomorphic to the unique irreducible quotient $J(P',\sigma'_{\lambda,\underline k},\nu'_{\underline k})$ of the parabolically induced representation
$$\textrm{Ind}^{G'}_{P'}[\sigma'_{\lambda,\underline k}\otimes\nu'_{\underline k}].$$
Next, we treat the split case, i.e., $G=GL_n(\R)$, $n=2k\geq 2$, and a representation $A_{\q_{\underline n}}(\lambda)$ parameterized by a partition $\underline n=[n_0,n_1,...,n_r]$ as in Section \ref{sect:parabg}. Consider the Levi subgroup $L:=L_{\underline n}\cong GL_{n_0}(\R)\times\prod_{i=1}^r GL_{k_i}(\C)$ containing a maximally split $\theta$-stable Cartan subgroup which we again denote by $T^+A^d$, following \cite{vozu}, p.82. The Lie algebras of its factors satisfy
$$\t^+=Lie(T^+)\hookrightarrow\t\quad\quad\textrm{and}\quad\quad\a\hookrightarrow\a^d=Lie(A^d),$$
where $\t$ and $\a$ are as in Section \ref{sect:parabg}. The Langlands decomposition of the centralizer of $A^d$ in $G=GL_n(\R)$ is given by
$$M^dA^d\cong\prod_{j=1}^{k-k_0} GL_2(\R)\times \prod_{j=1}^{n_0} GL_1(\R),$$
so $M^d\cong\prod_{j=1}^{k-k_0} SL_2^\pm(\R)\times \prod_{j=1}^{n_0}\{\pm1\}$ and we let $P$ be a parabolic subgroup containing $M^dA^d$ as Levi factor. The character $\nu^d$ defined on \cite{vozu}, p. 82, takes the form
\begin{eqnarray*}
\nu^d & = & (\rho_{\gl_{n_0}(\R)},\rho_{\gl_{k_1}(\C)},...,\rho_{\gl_{k_r}(\C)})\\
 & =: & \nu_{\underline n}.
\end{eqnarray*}
Next, we turn to the discrete series representation $\sigma^d$ of $M^d$. According to \cite{vozu}, (6.13) the Harish-Chandra parameter of $\sigma^d$ is
$$\rho^++\lambda|_{\t^+}+\rho(\u_{\underline n}).$$
Observing that $\m^d\cap\l\cong\t^+$, implies $\rho^+=(0,0,...,0)$. Furthermore, $\lambda|_{\t^+}=\lambda=(\lambda_1,...,\lambda_{k-k_0},0,...0)$, so we obtain that $\sigma^d$ has Harish-Chandra parameter $\lambda|_{\t^+}+\rho(\u_{\underline n})$. The lowest $M^d\cap K$-type of $\sigma^d$, cf. \cite{vozu} (6.14), affords that $m\in\prod_{j=1}^{n_0}\{\pm 1\}$ acts by $\sgn(\det(m))^{e(\lambda)}$ for some $e(\lambda)\in\{0,1\}$. If we observe that the roots appearing in $\u_{\underline n}$ and $\u'_{\underline k}$ are the same, by Lemmas\ \ref{lem:g'1} and \ref{lem:g1}, this defines $\sigma^d$ uniquely as
\begin{eqnarray*}
\sigma^d & \cong & \bigotimes_{j=1}^{k-k_0} D(0,\lambda_j+\rho^{\u'}_j)\otimes\sgn^{e(\lambda)}\\
 & =: & \sigma_{\lambda,\underline n}
\end{eqnarray*}
in the notation introduced in the previous section. If we use once more \cite{vozu}, Thm.\ 6.16, we obtain that $A_{\q_{\underline n}}(\lambda)$ is isomorphic to the unique irreducible quotient $J(P,\sigma_{\lambda,\underline n},\nu_{\underline n})$ of the parabolically induced representation
$$\textrm{Ind}^{G}_{P}[\sigma_{\lambda,\underline n}\otimes\nu_{\underline n}].$$
But now, as $|LJ|$ commutes with parabolic induction and forming tensor products, Lem.\ \ref{lem:localJL} together with induction by stages shows that
$$|LJ|(J(P,\sigma_{\lambda,\underline n},\nu_{\underline n}))=J(P',\sigma'_{\lambda,\underline k},\nu'_{\underline k}).$$
But this implies that
$$|LJ|(A_{\q_{\underline n}}(\lambda)\otimes\sgn^\varepsilon)=A_{\q'_{\underline k}}(\lambda),$$
where $\underline n=[n_0,n_1,...,n_r]$ and $\underline k=[k_0,k_1,...,k_r]$ with $n_i=2k_i$, $0\leq i\leq r$. In particular, following the characterization of the cohomological, irreducible, unitary dual of $G'$ and $G$ in our Prop.\ \ref{prop:irrunitG'} and \ref{prop:irrunitG},
$|LJ|$ defines a surjection
$$\xymatrix{\textrm{Coh}_{\mu}(G)\ar@{->>}[rr]^{|LJ|}  & & \textrm{Coh}_{\mu}(G')}$$
for all highest weight representations $E=E_\mu$. Furthermore, by Thm.\ \ref{prop:irrunitG'} (4) and \ref{prop:irrunitG} (3),
$|LJ|$ maps tempered cohomological representations to tempered cohomological representations. This shows the theorem.
\end{proof}

\begin{rem}
The fibers of $|LJ|_v$ in Coh$_\mu(GL_n(\R))$ over a representation $A_{\q'}(\lambda)$ can be explicitly described using \eqref{eq:equivrel}.
\end{rem}

\subsection{An illustrative example}
Let us exemplify Thm.\ \ref{thm:irrunitJL} by letting $k=2$, so $n=4$, and taking $E=\C$ to be the trivial representation, i.e., $\lambda=0$. We obtain the following two tables, where $Q$ is the standard parabolic subgroup of $G=GL_4(\R)$ with Levi factor $GL_2(\R)\times GL_2(\R)$ and $P'$ is as in the proof of Thm.\ \ref{thm:irrunitJL}.

\begin{table}[h!]
\begin{center}
\begin{tabular}{c|c|c}
  $\underline n$ & $A_{\q_{\underline n}}(0)$ & $A_{\q_{\underline n}}(0)\otimes\sgn$\\ \hline
  $[0,2,2]$ & Ind$^G_Q[D(0,3)\otimes D(0,1)]$ & Ind$^G_Q[D(0,3)\otimes D(0,1)]$ \\
  $[2,2]$ &  Ind$^G_Q[D(0,3)\otimes\triv_{GL_2(\R)}]$ & Ind$^G_Q[D(0,3)\otimes\sgn]$\\
  $[0,4]$ & $J(Q,D(-1,2)\otimes D(1,2))$ & $J(Q,D(-1,2)\otimes D(1,2))$\\
  $[4]$ & $\triv_{GL_4(\R)}$ & $\sgn$\\
  \multicolumn{3}{c}{}
\end{tabular}
\caption{Coh$_0(GL_4(\R))$}\label{t:g}
\end{center}
\end{table}

\begin{table}[h!]
\begin{center}
\begin{tabular}{c|c}
  $\underline k$ & $A_{\q'_{\underline k}}(0)$ \\ \hline
  $[0,1,1]$ & Ind$^{G'}_{P'}[F(0,3)\otimes F(0,1)]$  \\
  $[1,1]$ &  Ind$^{G'}_{P'}[F(0,3)\otimes\triv_{GL_1(\H)}]$ \\
  $[0,2]$ & $J(P',F(-1,2)\otimes F(1,2))$ \\
  $[2]$ & $\triv_{GL_2(\H)}$ \\
   \multicolumn{2}{c}{}
\end{tabular}
\caption{Coh$_0(GL_2(\H))$}\label{t:g'}
\end{center}
\end{table}
Since $F(0,1)=\triv_{GL_1(\H)}$, the representations in the first two lines of Table \ref{t:g'} coincide as predicted. Furthermore, the tempered representations are given by the first rows of Table \ref{t:g} and \ref{t:g'}, i.e., by the partitions $[0,2,2]$ and $[0,1,1]$. Applying Lem.\ \ref{lem:localJL} directly to Table \ref{t:g} and noting that Ind$^G_Q[D(0,3)\otimes\triv_{GL_2(\R)}]$ is isomorphic to the irreducible quotient of Ind$^G_P[D(0,3)\otimes\triv_{GL_1(\R)}|\cdot|^{1/2}\otimes\triv_{GL_1(\R)}|\cdot|^{-1/2}]$, where $P$ is as in the proof of Thm.\ \ref{thm:irrunitJL}, reproves the other assertions in this example.

\subsection{Essentially-tempered representations}\label{sect:tempcoh}
There is a particularly important subclass of irreducible, cohomological representations, namely those which are {\it essentially tempered}. Therefore recall that an irreducible admissible representation $\Pi'_v$ of $G'_v$, $v\in V_\infty$, is called {\it essentially tempered}, if it is of the form $\Pi'_v\cong \tilde\Pi'_v\otimes|\!\det'\!|^{s_v}$, with $\tilde\Pi'_v$ being an irreducible, unitary, tempered representation and $s_v\in\C$. A representation $\Pi'_\infty=\bigotimes_{v\in V_\infty} \Pi'_v$ of $G'_\infty$ is analogously called essentially tempered if all its local factors $\Pi'_v$ are. The analog definition applies to representations of $G_v$ when removing the prime.

According to Thm.\ \ref{thm:irrunitJL} and the definition of $|LJ|_v$, a cohomological, irreducible, admissible, essentially tempered representation $\Pi_v$ of $G_v$ is mapped via $|LJ|_v$ onto a cohomological, irreducible, admissible, essentially tempered representation $\Pi'_v$ of $G'_v$. Let us now shortly describe these representations a little bit more explicitly.

\subsubsection*{Non-split considerations}
Let $v\in V_\infty$ be non-split and let $E=E_{\mu_v}$ be a highest weight representation as in Section \ref{sect:highweights}. For sake of simplicity, we will again drop the subscript ``$v$'' in this subsection for all local objects. By the same reason as in Section \ref{sect:L&L'} we see that, in order to admit a cohomological, essentially tempered, irreducible admissible representation $\Pi'$ of $G'$, it is necessary for the coefficient system $E_{\mu}$ to be essentially self-dual. We may hence assume without loss of generality that from now on in this subsection $E_{\mu}$ is essentially self-dual writing $E_{\mu}=E_{\mu}^{\sf v}\otimes\det^{w}$, $w\in\Z$. Otherwise put, if $\mu=(\mu_1,...,\mu_n)$ is the highest weight of $E_\mu$,
$$\mu_{i}+\mu_{n-i+1}=w \quad\textrm{for}\quad 1\leq i\leq n.$$
Let $\ell:=(\ell_1,...,\ell_k)$ with
$$\ell_i:=\mu_i-\mu_{n-i+1}+(n-2i+1)=w-2\mu_{n-i+1}+(n-2i+1).$$
We obtain that $\ell_1>\ell_2>...>\ell_k\geq 1$.  By Thm.\ \ref{prop:irrunitG'} (4) and Borel--Wallach \cite{bowa}, I, Thm.\ 5.3, the unique, irreducible, admissible, essentially tempered representation of $G'$, which has non-trivial $(\g',K'^\circ)$-cohomology twisted by $E_\mu$ is
$$A_{\q'_{\underline k}}(\lambda)\otimes\textrm{det}'^{-w/2},$$
where $\underline k=[0,1,1,...,1]$. It is furthermore a consequence of the proof of Thm.\ \ref{thm:irrunitJL}, that this latter representation is isomorphic to the unique irreducible quotient of
$$\textrm{Ind}^{G'}_{P'}[F(w,\ell_1)\otimes...\otimes F(w,\ell_k)].$$
Now, \cite{hch}, Thm.\ 1, p. 198, implies that this normalized induced representation is already irreducible itself, whence
$$J'(w,\ell):=A_{\q'_{\underline k}}(\lambda)\otimes\textrm{det}'^{-w/2}\cong \textrm{Ind}^{G'}_{P'}[F(w,\ell_1)\otimes...\otimes F(w,\ell_k)]$$
is the unique, irreducible, admissible, essentially tempered representation of $G'$, which has non-trivial $(\g',K'^\circ)$-cohomology twisted by $E_\mu$.
Furthermore,
$$H^q(\mathfrak{gl}_k(\H),Sp(k)\R_+,J'(w,\ell)\otimes E_\mu)\cong\bigwedge^{q-k(k-1)}\C^{k-1},$$
which follows from Thm.\ \ref{prop:irrunitG'} (3) or directly by \cite{bowa} III. Thm 5.1.

The split case $G=G_v=GL_n(\R)$, $n=2k$, is treated in analogy. By Thm.\ \ref{prop:irrunitG} and \cite{bowa}, I, Thm.\ 5.3, the unique, irreducible, admissible, essentially tempered representation of $G$, which has non-trivial $(\g,K^\circ)$-cohomology twisted by $E_\mu$ is
$$A_{\q_{\underline n}}(\lambda)\otimes|\textrm{det}|^{-w/2}\cong A_{\q_{\underline n}}(\lambda)\otimes\sgn|\textrm{det}|^{-w/2},$$
where $\underline n=[0,2,2,...,2]$. As shown in the proof of Thm.\ \ref{thm:irrunitJL}, this latter representation is isomorphic to the unique irreducible quotient of
$$\textrm{Ind}^{G}_{P}[D(w,\ell_1)\otimes...\otimes D(w,\ell_k)],$$
which turns out to be irreducible itself, too, cf. \cite{hch}, Thm.\ 1, p. 198. Hence,
$$J(w,\ell):=A_{\q_{\underline n}}(\lambda)\otimes|\textrm{det}|^{-w/2}\cong \textrm{Ind}^{G}_{P}[D(w,\ell_1)\otimes...\otimes D(w,\ell_k)]$$
is the unique, irreducible, admissible, essentially tempered representation of $G$, which has non-trivial $(\g,K^\circ)$-cohomology twisted by $E_\mu$.
We compute,
$$H^q(\mathfrak{gl}_n(\R),SO(n)\R_+,J(w,\ell)\otimes E_\mu)\cong\C^2\otimes\bigwedge^{q-k^2}\C^{k-1} $$
which follows again from \cite{bowa} III. Thm 5.1 or directly using \cite{bowa}, III. Thm.\ 3.3. Observe that there is a factor $\C^2$ appearing in the formula. This is due to the fact that we calculated $(\mathfrak{gl}_n(\R),SO(n)\R_+)$-cohomology (as in Mahnkopf \cite{mahnk}, 3.1.2), and not $(\mathfrak{gl}_n(\R),O(n)\R_+)$-cohomology (as it was done by Clozel in \cite{clozel}, Lemme 3.14). The reason for that will become clear after Thm.\ \ref{thm:regalg=cusp}. Compare this also to \cite{mahnk}, p. 590.

\subsubsection*{Remaining cases}
Although they do not really come under the purview of the local Jacquet-Langlands map, let us also treat the remaining cases for further reference. If $G=G_v=GL_n(\R)$, $n=2k+1$ is odd, then, given $E_\mu$, there are two irreducible, admissible, essentially tempered representation of $G$, which have non-trivial $(\g,K^\circ)$-cohomology twisted by $E_\mu$. They are of the form
$$J(w,\ell,\varepsilon):=\textrm{Ind}^{G}_{P}[D(w,\ell_1)\otimes...\otimes D(w,\ell_k)\otimes \sgn^\varepsilon|\textrm{det}|^{-w/2}],$$
where $P$ is the standard parabolic subgroup of $G$ of type $(2,...,2,1)$ and $\varepsilon\in\{0,1\}$. Among the two representations $J(w,\ell,0)$ and $J(w,\ell,1)$, there is precisely one representation which has non-trivial $(\g,K)$-cohomology with respect to $E_\mu$. See Mahnkopf \cite{mahnk}, 3.1.3. One computes
$$H^q(\mathfrak{gl}_n(\R),SO(n)\R_+,J(w,\ell,\varepsilon)\otimes E_\mu)\cong \bigwedge^{q-k(k+1)}\C^{k}.$$

Finally, if $v$ is complex and $G=G_v=GL_n(\C)$, then, given $E_\mu$, there is again only one irreducible, admissible, essentially tempered representation $J(\mu)$ of $G$, which has non-trivial $(\g,K^\circ)$-cohomology twisted by $E_\mu$. It is again fully induced. For $\mu=(\mu_\iota,\mu_{\bar\iota})$, we define $a_i:=\mu_{\bar\iota,i}+\rho_{\iota,i}=\mu_{\bar\iota,i}+\tfrac{n-2i+1}{2}$, $1\leq i\leq n$, and set $w$ as in Sect.\ \ref{sect:highweights}. Then,
$$J(\mu)\cong\textrm{Ind}^{G}_{B}[z^{a_1-w}_1\bar z^{-a_1}_1\otimes...\otimes z^{a_n-w}_n\bar z^{-a_n}_n],$$
where $B$ is the standard Borel subgroup of $G$ and $z^x_i\bar z^y_i$ is the character of the $i$-th $GL_1(\C)$-factor of $T$, which maps $z_i$ to $z^x_i\bar z^y_i$. This follows from Enright \cite{enright}, Thm. 6.1. We obtain

$$H^q(\mathfrak{gl}_n(\C),U(n)\C^*,J(\mu)\otimes E_\mu)\cong\bigwedge^{q-n(n-1)/2}\C^{n-1}.$$

\begin{rem}\label{rem:independent}
We want to point out, that in all cases the cohomology is independent of the given highest weight representation. Cf.\ Thm.\ \ref{prop:irrunitG'}.
\end{rem}

\subsection{The global Jacquet-Langlands map}\label{sect:globalJL}
There is also a global version of the Jacquet-Langlands map, developed by Badulescu in \cite{ioan}, sect. 5, and by Badulescu--Renard in \cite{ioanrenard}, sect. 18. (For $m=1$ see also Harris--Taylor \cite{harris}, VI.1) We denote it here by $JL$ (because it goes in the direction different to the one of $|LJ|_v$). It is uniquely determined as being the map satisfying the conditions of the following theorem:

\begin{thm}[
\cite{ioanrenard}, Thm.\ 18.1 \& Prop.\ 18.2]\label{thm:globJL}
The map $JL$ is the unique injection from the set of all unitary discrete series representations $\tilde\Pi'$ of $G'(\A)$ into the set of all unitary discrete series representations $\tilde\Pi$ of $G(\A)$ such that
$$|LJ|_v(JL(\tilde\Pi')_v)=\tilde\Pi'_v$$
at all places $v\in V$. \\Moreover, if $JL(\tilde\Pi')$ is unitary cuspidal, then so is $\tilde\Pi'$.
\end{thm}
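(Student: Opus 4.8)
The plan is to reconstruct the trace-formula comparison of Badulescu \cite{ioan} and Badulescu--Renard \cite{ioanrenard}, which generalizes the original Jacquet--Langlands argument to higher rank. Fix a unitary central character $\omega$ and compare the discrete parts of the Arthur--Selberg trace formulas for $G'$ and for $G=GL_n/F$, both formed with respect to $\omega$. The engine is a transfer of test functions: to a decomposable $f'=\bigotimes_v f'_v$ on $G'(\A)$ one attaches $f=\bigotimes_v f_v$ on $G(\A)$ with $f_v$ and $f'_v$ having matching orbital integrals at every place. At non-archimedean places such an $f_v$ is provided by Deligne--Kazhdan--Vign\'eras \cite{dkv}, and at archimedean places by the local theory underlying the maps $|LJ|_v$; the key structural input is that the semisimple conjugacy classes arising from $G'(F_v)$ sit naturally inside those of $G(F_v)$, uniformly in $v$, so that the two families of test functions are genuinely compatible across all places.

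First I would match the geometric sides. Choosing $f'$ to be cuspidal (an Euler--Poincar\'e function, or a coefficient of a square-integrable representation) at two auxiliary places, the \emph{simple} trace formula applies on both sides: only elliptic terms survive and the continuous spectrum drops out. The elliptic classes of $G(F)$ contributing to $\mathrm{tr}(f\mid\cdot)$ are exactly the images of elliptic classes of $G'(F)$ --- the orbital integrals of a transfer $f$ vanish off that image, while on it the orbital-integral and volume factors agree --- so the two geometric sides coincide, and hence so do the two discrete spectral sides. This yields, for all matching pairs $(f',f)$, an identity
$$\sum_{\tilde\Pi'} m(\tilde\Pi')\,\mathrm{tr}\,\tilde\Pi'(f') \;=\; \sum_{\tilde\Pi} m(\tilde\Pi)\,\mathrm{tr}\,\tilde\Pi(f).$$
Combining this with the local character relations attached to the transfer at each place (the contribution of a representation of $G_v$ through a transferred $f_v$ vanishes unless that representation lies in the image of $|LJ|_v$, in which case it equals, up to a sign, the contribution of its $|LJ|_v$-image) and with linear independence of characters, one reads off that every $\tilde\Pi'$ occurring discretely on the left is matched by a $\tilde\Pi$ occurring discretely on the right with $|LJ|_v(\tilde\Pi_v)=\tilde\Pi'_v$ at all $v$, the multiplicities on both sides being $1$. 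Uniqueness of such a $\tilde\Pi$ follows because it is already pinned down by its components at the cofinitely many places where $D$ splits, where $|LJ|_v$ is the identity, via strong multiplicity one for $GL_n(\A)$; set $JL(\tilde\Pi'):=\tilde\Pi$. Injectivity is then immediate: if $JL(\tilde\Pi'_1)\cong JL(\tilde\Pi'_2)$, applying $|LJ|_v$ place by place gives $\tilde\Pi'_{1,v}\cong\tilde\Pi'_{2,v}$ for all $v$, whence $\tilde\Pi'_1\cong\tilde\Pi'_2$; and the stated uniqueness of $JL$ among injections with the displayed property is the same observation.

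For the ``moreover'' I would invoke the M\oe glin--Waldspurger description of the discrete spectrum of $GL_n$ \cite{moewal} and its analogue for $GL_m/D$ established in \cite{ioanrenard}: a unitary discrete series is a Speh residue $\mathrm{Speh}(\tilde\delta,\ell)$ with $\ell\geq 1$ and $\tilde\delta$ cuspidal, and it is cuspidal precisely when $\ell=1$. Since $JL$ is built from the maps $|LJ|_v$, which commute with parabolic induction and with passage to Langlands quotients, it is compatible with the Speh construction: $JL$ sends $\mathrm{Speh}(\tilde\delta',\ell)$ to a Speh residue $\mathrm{Speh}(\delta,\ell')$ with $\ell$ dividing $\ell'$ (indeed $\ell'=\ell\cdot s$ for a divisor $s$ of $d$ attached to $\tilde\delta'$). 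In particular $\ell'=1$ forces $\ell=1$, i.e.\ if $JL(\tilde\Pi')$ is cuspidal then so is $\tilde\Pi'$. The converse genuinely fails --- a cuspidal $\tilde\Pi'$ on $G'$ may transfer to a true Speh residue on $G$ --- which is the source of the asymmetry exploited in the rest of the paper.

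The step I expect to be the main obstacle is the matching of the geometric sides: establishing the transfer of orbital integrals at the archimedean places, and verifying that the \emph{simple} form of the trace formula is legitimately in force --- that one has enough cuspidal-type test functions and that every proper-parabolic and continuous-spectrum contribution truly cancels in the comparison --- since it is precisely this that upgrades the spectral identity from a relation among virtual characters to a clean bijection of discrete series with preserved multiplicities.
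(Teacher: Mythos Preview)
The paper does not prove this theorem; it is quoted verbatim from Badulescu--Renard \cite{ioanrenard}, Thm.\ 18.1 and Prop.\ 18.2, as the bracketed citation in the theorem header makes explicit, and no argument is supplied in the body of the paper. There is therefore nothing in the paper's own text to compare your proposal against.

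That said, your sketch is a fair high-level summary of the strategy actually used in \cite{ioan} and \cite{ioanrenard}: a comparison of simple trace formulas via transfer of test functions with matching orbital integrals, separation of representations by local character relations and linear independence of characters, and the appeal to strong multiplicity one for $GL_n$ to pin down the transfer; the cuspidality statement is indeed deduced from the M\oe glin--Waldspurger residual spectrum classification together with the compatibility of the transfer with the Speh construction. For the purposes of the present paper, however, Thm.~\ref{thm:globJL} is used only as a black box, so a proof is neither expected nor needed here.
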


We remark that the condition $|LJ|_v(JL(\tilde\Pi')_v)=\tilde\Pi'_v$ implies that in particular $JL(\tilde\Pi')_v=\tilde\Pi'_v$ at all split places $v$. In accordance with our local definition (cf. Section \ref{sect:localJL}), we define for $\Pi'=\tilde\Pi'\otimes |\textrm{det}'|^s\in\mathscr D(G')$:
$$JL(\Pi'):=JL(\tilde\Pi')\otimes |\det|^s.$$

\begin{cor}\label{cor:globalJL}
The so extended global Jacquet-Langlands map satisfies $JL(\Pi')\in\mathscr D(G)$. Furthermore, all assertions of Thm.\ \ref{thm:globJL} hold unchanged (omitting the word ``unitary'').
\end{cor}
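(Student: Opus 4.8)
The plan is to unwind the definition $JL(\Pi') := JL(\tilde\Pi')\otimes|\det|^s$ for $\Pi' = \tilde\Pi'\otimes|\det'|^s \in \mathscr D(G')$ and verify each of the three assertions in turn. First I would observe that by Theorem \ref{thm:globJL} the image $JL(\tilde\Pi')$ is a unitary discrete series representation of $G(\A)$, so $JL(\tilde\Pi')\otimes|\det|^s$ is by definition an essentially discrete series representation, i.e.\ an element of $\mathscr D(G)$; this is immediate from our convention of calling essentially discrete series representations simply ``discrete series''. Hence the extended map $JL$ does land in $\mathscr D(G)$, and it is still injective because twisting by $|\det|^s$ is invertible (one recovers $\tilde\Pi'$ and $s$ from $\Pi'$, since $s$ is determined by the central character up to the unitary part, and two different $s$ give representations with different central exponents).

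Next I would check the local compatibility: for $\Pi' = \tilde\Pi'\otimes|\det'|^s$ we have, at every place $v\in V$,
\begin{displaymath}
|LJ|_v(JL(\Pi')_v) = |LJ|_v\big(JL(\tilde\Pi')_v\otimes|\det|^s\big) = |LJ|_v(JL(\tilde\Pi')_v)\otimes\det'^s = \tilde\Pi'_v\otimes\det'^s = \Pi'_v,
\end{displaymath}
where the second equality is the compatibility of $|LJ|_v$ with unramified twists recorded in Section \ref{sect:localJL} (namely $|LJ|_v(\Pi_v) = |LJ|_v(\tilde\Pi_v)\otimes\det'^s$), and the third is Theorem \ref{thm:globJL} applied to $\tilde\Pi'$. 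This shows the characterizing property of Theorem \ref{thm:globJL} persists verbatim for the extended map. Uniqueness of the extended map follows similarly: any map satisfying the local identity at all $v$ must, after untwisting by $|\det|^s$, satisfy it for the unitary discrete series $\tilde\Pi'$, hence agrees with $JL(\tilde\Pi')$ there, and therefore agrees with our extension on all of $\mathscr D(G')$.

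Finally, for the cuspidality assertion, suppose $JL(\Pi') = JL(\tilde\Pi')\otimes|\det|^s$ is cuspidal in the extended sense, i.e.\ a twist of a unitary cuspidal automorphic representation. Since twisting by $|\det|^s$ preserves the cuspidal subspace, this forces $JL(\tilde\Pi')$ itself to be unitary cuspidal; then the last line of Theorem \ref{thm:globJL} gives that $\tilde\Pi'$ is unitary cuspidal, and hence $\Pi' = \tilde\Pi'\otimes|\det'|^s$ is cuspidal in our extended terminology. I do not anticipate a genuine obstacle here: the only point requiring a little care is the compatibility of $|LJ|_v$ with twists by $|\det|^s$ at archimedean places, where $\det'^s$ must be interpreted via the reduced norm, but this is exactly how the twisted map was defined in Section \ref{sect:localJL}, so the argument is purely bookkeeping.
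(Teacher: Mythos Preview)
The paper states this corollary without proof, treating it as immediate from the definitions; your detailed unwinding is exactly the bookkeeping the paper leaves implicit, and it is correct. One small inaccuracy: you write that ``one recovers $\tilde\Pi'$ and $s$ from $\Pi'$'', but the decomposition $\Pi'=\tilde\Pi'\otimes|\det'|^s$ is only unique up to shifting $s$ by a purely imaginary number (absorbed into the unitary part); well-definedness and injectivity of the extended $JL$ still hold, since the unitary $JL$ commutes with unitary character twists by the uniqueness clause of Theorem~\ref{thm:globJL} together with the local compatibility of $|LJ|_v$ with such twists.
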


\section{Algebraic and regular algebraic representations of $G'(\A)$}\label{sect:regalg}
\subsection{The Local Langlands Classification}\label{sect:LLC}
Let $W_\mathbb K$ be the local Weil group of $\mathbb K=\R, \C$, cf. Tate \cite{tate}, 1.4.3. It can be defined as follows:

$$W_{\mathbb K}=\left\{
\begin{array}{ll}
\C^* & \textrm{ if $\mathbb K=\C$}\\
\C^*\cup j\C^* & \textrm{ if $\mathbb K =\R$},
\end{array}
\right.$$
where in the second case $j^2=-1$ and $jzj^{-1}=\overline z$, for all $z\in\C^*$.

It was proved by Langlands in \cite{lang2} that there is a canonical bijection between the class of irreducible admissible representations $\pi$ of $GL_n(\K)$ and the class of semi-simple, complex, $n$-dimensional representations $\tau$ of $W_\K$, $\K=\R,\C$. We denote this correspondence by
$$\pi\leftrightarrow \tau=\tau(\pi)$$
and call $\tau(\pi)$ the {\it Langlands parameter} of $\pi$. If $\K=\R$ we can furthermore restrict $\tau(\pi)$ to the connected component of the local Weil group $W_\R^\circ\cong\C^*$, which gives us an $n$-dimensional, complex representation of $\C^*$, again denoted by $\tau(\pi)$. This enables us to view $\tau(\pi)$ -- no matter if $\K=\R$ or $\K=\C$ -- as a representation $$\tau(\pi):\C^*\ra GL_n(\C).$$
If $\K=\R$ and $\pi\cong D(l)\otimes |\det|^s$ (notation as in Section \ref{sect:easylemma}) for some integer $l\geq 1$ and $s\in\C$, then the restriction of $\tau(\pi)$ to $\C^*$ is explicitly given by
$$z\mapsto (z^{\frac l2+s}(\overline z)^{-\frac l2+s})\oplus (z^{-\frac l2+s}(\overline z)^{\frac l2+s}).$$

\subsection{The split case}
We will now recall the definition of an algebraic and a regular algebraic automorphic representation of $GL_n/F$, following Clozel.

\begin{defn}[\cite{clozel}, Def. 1.8 \& Def. 3.12]\label{def:split}
Let $\Pi\in\mathscr D(G)$ with archimedean component $\Pi_\infty=\bigotimes_{v\in V_\infty}\Pi_v$.
\begin{enumerate}
\item The representation $\Pi$ is called {\it algebraic}, if at all places $v\in V_\infty$
$$\Pi_v\leftrightarrow\tau(\Pi_v)=\chi_{1,v}\oplus...\oplus\chi_{n,v},$$
where $\chi_{i,v}|\cdot|_\C^{\frac{1-n}{2}}$, $1\leq i\leq n$, is an algebraic character and $|\cdot|_\C$ denotes the usual normalized absolute value on $\C$.
\item An algebraic representation $\Pi$ is furthermore called {\it regular algebraic}, if the infinitesimal character $\chi_{\Pi_\infty}$ of $\Pi_\infty$ is regular, i.e., inside the positive, open Weyl chamber.
\end{enumerate}
\end{defn}

\begin{rem}\label{rem:algregalg}
A character $\chi_{i,v}|\cdot|_\C^{\frac{1-n}{2}}$ being algebraic means that
$$\chi_{i,v}(z)=z^{p_{i,v}}(\overline z)^{q_{i,v}}$$
with $p_{i,v},q_{i,v}\in\frac{n-1}{2}+\Z$ for all $v\in V_\infty$ and $1\leq i\leq n$. If $v$ is a real place, we can even more suppose that $\tau(\Pi_v)$ is given by
$$z\mapsto (z^{p_{1,v}}(\overline z)^{p_{2,v}})\oplus (z^{p_{2,v}}(\overline z)^{p_{1,v}})\oplus ...\oplus ((z\overline z)^{p_{2r+1,v}})\oplus...\oplus ((z\overline z)^{p_{n,v}}).$$
We let $p_v=(p_{1,v},...,p_{n,v})\in(\frac{n-1}{2}+\Z)^n$ for all $v\in V_\infty$. The definition of $\Pi$ being regular algebraic can now be equivalently reformulated by saying that for all $v\in V_\infty$, $p_{i,v}\neq p_{j,v}$, $1\leq i<j\leq n$. (This is actually the original definition of Clozel, cf. \cite{clozel}, Def. 3.12.)
\end{rem}

There is also the following, very useful, equivalent description of regular algebraic, cuspidal representations $\Pi$, which is a consequence of Clozel \cite{clozel}, Lemme 3.14.
\begin{thm}\label{thm:regalg=cusp}
Let $\Pi\in\mathscr D(G)$ and assume $\Pi$ is cuspidal. Then the following are equivalent:
\begin{enumerate}
\item[(i)] $\Pi$ is regular algebraic.
\item[(ii)] $\Pi_\infty$ is cohomological and essentially tempered.
\end{enumerate}
\end{thm}
\begin{proof}[Sketch of proof]
By Clozel \cite{clozel}, Lemme 3.14, a cuspidal automorphic representation $\Pi$ is regular algebraic if and only if $\Pi_\infty$ is cohomological. So, we only need to show that the archimedean component $\Pi_\infty$ of a cuspidal {\it and} cohomological representation $\Pi$ is automatically essentially tempered. This is actually well-known, so we sketch the argument here. For $n=1$, this is obvious. If $n\geq 2$, then by Shalika \cite{shal}, corollary on p. 190, $\Pi_v$ is a generic representation of $G_v$ for all $v\in V$. But a generic, cohomological, irreducible admissible representation of $G_v$, $v\in V_\infty$, is essentially tempered. This shows the claim.
\end{proof}

\begin{rem}
We want to point out that it is due to the fact that we calculate $(\gl_n(\R),SO(n)\R_+)$ - cohomology at a real place $v$ that we can formulate the theorem in that way. If we calculated $(\gl_n(\R),O(n)\R_+)$-cohomology instead, if $n$ is odd, one might have to twist $\Pi_v$ with the sign-character in order to get non-vanishing cohomology, as only one of the representations $J(w_v,\ell_v,0)$ and $J(w_v,\ell_v,1)$, cf.\ Sect.\ \ref{sect:tempcoh}, is $(\gl_n(\R),O(n)\R_+)$-cohomological. See also \cite{clozel}, Lem.\ 3.14.
\end{rem}

\begin{rem}\label{rem:regalg=cusp}
By Thm.\ \ref{thm:regalg=cusp} and the results of Section \ref{sect:tempcoh}, a cuspidal automorphic representation $\Pi\in\mathscr D(G)$ is regular algebraic if and only if at all archimedean places $v\in V_\infty$, the local component $\Pi_v$ is one of the according representations introduced in Section \ref{sect:tempcoh}. I.e., for a real place $v$, $\Pi_v\cong J(w_v,\ell_v)$ (resp.\ $\Pi_v\cong J(w_v,\ell_v,\epsilon_v)$) if $n$ is even (resp.\ if $n$ is odd) and $\Pi_v\cong J(\mu_v)$ for a complex place.
\end{rem}

Moreover, we remark that the cuspidality assumption on $\Pi$ cannot be removed in Thm.\ \ref{thm:regalg=cusp}. To see this, consider the following counter-example:

\begin{ex}\label{ex:(ii)not(i)}
Let $F=\Q$ and take $\pi$ to be a cuspidal automorphic representation of $GL_2(\A)$ with $\pi_\infty\cong D(2)$ (notation as in Section \ref{sect:easylemma}). 
If we put moreover $n=4$, then the global induced representation
$$\textrm{Ind}^{GL_4(\A)}_{GL_2(\A)\times GL_2(\A)}[\pi|\det|^{1/2}\otimes\pi|\det|^{-1/2}]$$
has a unique irreducible quotient $\Pi$, which is a residual representation in $\mathscr D(GL_4)$, see M\oe glin--Waldspurger \cite{moewalgln}. At infinity, $\Pi_\infty$ is the Langlands quotient of
$$\textrm{Ind}^{GL_4(\R)}_{GL_2(\R)\times GL_2(\R)}[D(2)|\det|^{1/2}\otimes D(2)|\det|^{-1/2}].$$
Therefore, by Section \ref{sect:LLC}, $\tau(\Pi_\infty)$ is given by the character
$$z\mapsto (z^{\frac{3}{2}}(\overline z)^{-\frac{1}{2}})\oplus (z^{-\frac{1}{2}}(\overline z)^{\frac{3}{2}})\oplus (z^{\frac{1}{2}}(\overline z)^{-\frac{3}{2}})\oplus (z^{-\frac{3}{2}}(\overline z)^{\frac{1}{2}}),$$
whose exponents are all in $\frac12+\Z$ and pairwise different, whence $\Pi$ is regular algebraic by Rem. \ref{rem:algregalg}. Its archimedean component $\Pi_\infty$ is even cohomological with respect to the trivial representation: One may check using Thm.\ \ref{thm:irrunitJL}, resp. Table \ref{t:g}, that
$$H^q(\gl_4(\R),SO(4)\R^+,\Pi_\infty\otimes\C)\neq 0$$
in degrees $q=3,6$. But clearly, $\Pi_\infty$ is not tempered. Indeed, $\Pi_\infty = A_{\q_{\underline n}}(\lambda)$ with $\lambda=0$ and $\underline n=[0,4]$, which is not tempered by Thm.\ \ref{prop:irrunitG} (3).
\end{ex}

Before we turn to the case of $G'$, let us also recall Clozel's ``Lemme de puret\'e'', \cite{clozel}, Lemme 4.9:

\begin{lem}\label{lem:pureteG}
If $\Pi\in\mathscr D(G)$ is algebraic and cuspidal, then there is a {\sf w} $\in\Z$ such that for all $v\in V_\infty$, the algebraic characters of $\C^*$ associated to $\Pi_v|\cdot|_v^{\frac{1-n}{2}}$ are of the form $z\mapsto z^p(\overline z)^q$ with $p+q=$ {\sf w}.
\end{lem}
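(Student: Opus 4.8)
The plan is to reduce the assertion to the statement that every archimedean component $\Pi_v$ is \emph{tempered after one and the same global twist}; granted this, the purity claim is bookkeeping with Langlands parameters. So write $\Pi=\Pi^{\circ}\otimes|\!\det\!|^{s}$ with $\Pi^{\circ}$ a unitary cuspidal automorphic representation and $s\in\C$; since $\Pi$ is algebraic, all exponents of the local parameters $\tau(\Pi_v)$ lie in $\frac{n-1}{2}+\Z\subset\R$ (Rem.\ \ref{rem:algregalg}), which forces $s\in\R$. By the theorem of Shalika invoked in the proof of Thm.\ \ref{thm:regalg=cusp}, a cuspidal automorphic representation of $GL_n$ over a number field is globally generic, hence each $\Pi^{\circ}_v$ is a generic irreducible unitary representation of $GL_n(\R)$, resp.\ $GL_n(\C)$. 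By the classification of the archimedean unitary dual, such a representation has the shape $\textrm{Ind}\bigl(\tau\otimes\bigotimes_k[\delta_k|\!\det\!|^{\sigma_k}\otimes\delta_k|\!\det\!|^{-\sigma_k}]\bigr)$ with $\tau$ tempered, the $\delta_k$ essentially square-integrable, and $0<\sigma_k<\tfrac12$.

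First I rule out the complementary-series factors. For an essentially square-integrable representation $\delta|\!\det\!|^{a}$ — necessarily a character of $GL_1$, or (over $\R$) a representation $D(l)\otimes|\!\det\!|^{a}$ — every exponent pair $(p,q)$ of the restriction of its parameter to $\C^{*}$ satisfies $p+q=2a$, by the explicit formulas of Section \ref{sect:LLC}. Hence, after the normalizing twist by $|\cdot|^{\frac{1-n}{2}}$ entering the definition of ``algebraic'', every exponent coming from a factor $\delta_k|\!\det\!|^{\sigma_k}$ has $p+q=2\sigma_k+(1-n)$; as $\Pi$ is algebraic these $p,q$ are integers, so $2\sigma_k\in\Z$, contradicting $0<\sigma_k<\tfrac12$. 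Therefore $\Pi^{\circ}_v=\tau$ is tempered for every archimedean $v$. The parameter of a tempered representation of $GL_n$ over $\R$ or $\C$ is unitary, so each of its exponent pairs has $p+q$ purely imaginary; since $\Pi$ — and hence, after subtracting the real twist $|\!\det\!|^{s}$, also $\Pi^{\circ}_v$ — has real exponents, we get $p+q=0$ for every exponent of $\tau(\Pi^{\circ}_v)$. The character $|\!\det\!|^{s}$ has parameter $z\mapsto z^{s}(\overline z)^{s}$ on $\C^{*}$ at every archimedean place, so the exponents of $\tau(\Pi_v)=\tau(\Pi^{\circ}_v)\otimes\tau(|\!\det\!|^{s})$ all satisfy $p+q=2s$, and the algebraic characters attached to $\Pi_v|\cdot|_v^{\frac{1-n}{2}}$ accordingly all have $p+q=2s+(1-n)=:\mathsf{w}$ — independent of $v$, and lying in $\Z$ since each $p$ and each $q$ does. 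This is exactly the claim.

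The only non-elementary ingredients are Shalika's global genericity of cuspidal representations and the classification of the archimedean unitary dual; the rest is normalization bookkeeping, which is also the point to be most careful about — keeping straight the two different normalizations of $|\cdot|_v$ at real versus complex places together with the twist by $|\cdot|^{\frac{1-n}{2}}$, so that a single integer $\mathsf{w}$ works uniformly. Genericity is precisely what fails for residual, non-cuspidal elements of $\mathscr D(G)$ — whose archimedean components are Speh-type representations, with parameter exponents taking several distinct values of $p+q$ — which is why the cuspidality hypothesis cannot be removed, as Example \ref{ex:(ii)not(i)} shows. In particular no use of $L$-functions or of the purity of algebraic Hecke characters is needed along this route.
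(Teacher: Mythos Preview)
The paper does not give its own proof of this lemma; it is simply recalled from Clozel \cite{clozel}, Lemme~4.9. Your overall strategy---use Shalika's genericity, the classification of the generic unitary dual at archimedean places, and then let algebraicity kill the complementary-series parameters---is exactly Clozel's route, so in spirit you are reproducing the intended argument.

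There is, however, a genuine slip in your complementary-series step. You write that the exponents coming from a factor $\delta_k|\!\det\!|^{\sigma_k}$ satisfy $p+q=2\sigma_k+(1-n)$ after the normalizing twist. But that factor sits inside $\Pi^{\circ}_v$, not $\Pi_v$; passing to $\Pi_v=\Pi^{\circ}_v\otimes|\!\det\!|^{s}$ contributes an extra $+2s$, so the correct relation is $p+q=2\sigma_k+2s+(1-n)$. From this alone, integrality of $p+q$ only yields $2\sigma_k+2s\in\Z$, which is no contradiction. Looking at $p+q$ for the paired factor $\delta_k|\!\det\!|^{-\sigma_k}$ and subtracting gives only $4\sigma_k\in\Z$, which still allows $\sigma_k=\tfrac14$.

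The repair is to compare \emph{individual} exponents of the two paired factors rather than their sums: an exponent $p$ from $\delta_k|\!\det\!|^{\sigma_k}$ and the corresponding exponent $p'$ from $\delta_k|\!\det\!|^{-\sigma_k}$ (same $\delta_k$) differ exactly by $2\sigma_k$, and since both lie in $\frac{n-1}{2}+\Z$ one gets $2\sigma_k\in\Z$, contradicting $0<\sigma_k<\tfrac12$. With this fix your argument goes through. (A related small point: your claim ``which forces $s\in\R$'' is not quite justified as written; what is true is that one may \emph{choose} the decomposition $\Pi=\Pi^{\circ}\otimes|\!\det\!|^{s}$ with $s\in\R$, by absorbing any unitary twist $|\!\det\!|^{it}$ into the unitary $\Pi^{\circ}$. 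This is harmless but should be said.)
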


\medskip\medskip
\subsection{The general case}
Motivated by the above definition for the split case (i.e., $D=F$) we extend it to the more general case of $G'$.

\begin{defn}\label{def:nonsplit}
Let $\Pi'\in\mathscr D(G')$.
\begin{enumerate}
\item The representation $\Pi'$ is called {\it algebraic}, if $JL(\Pi')\in\mathscr D(G)$ is algebraic.
\item An algebraic representation $\Pi'$ is called {\it regular algebraic}, if $JL(\Pi')\in\mathscr D(G)$ is regular algebraic.
\end{enumerate}
\end{defn}

We will now see that our definition goes very well with Thm.\ \ref{thm:regalg=cusp}.

\begin{thm}\label{thm:regalgG'}
Let $\Pi'\in\mathscr D(G')$ and assume that $JL(\Pi')$ is cuspidal. Then the following are equivalent:
\begin{enumerate}
\item[(i)] $\Pi'$ is regular algebraic.
\item[(ii)] $\Pi'_\infty$ is cohomological and essentially tempered.
\end{enumerate}
\end{thm}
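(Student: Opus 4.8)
The strategy is to reduce everything to the split case via Theorem \ref{thm:regalg=cusp} applied to $\Pi = JL(\Pi')$, which is cuspidal by hypothesis, and then to transfer the archimedean conclusion across $|LJ|_v$ using Theorem \ref{thm:irrunitJL}. The key observation is that the archimedean components $\Pi'_\infty$ and $JL(\Pi')_\infty$ are related place-by-place: at each split $v\in V_\infty$ we have $JL(\Pi')_v = \Pi'_v$ and there is nothing to prove, while at each non-split $v\in V_\infty$ the defining property of $JL$ (Theorem \ref{thm:globJL}, extended in Corollary \ref{cor:globalJL}) gives $|LJ|_v(JL(\Pi')_v) = \Pi'_v$. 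So the whole proof hinges on the following local dictionary at a non-split archimedean place: $JL(\Pi')_v$ is cohomological and essentially tempered if and only if its image $\Pi'_v$ under $|LJ|_v$ is. This is precisely what Theorem \ref{thm:irrunitJL} delivers.

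\medskip

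\emph{First} I would observe that by Definition \ref{def:nonsplit}, $\Pi'$ is regular algebraic if and only if $\Pi := JL(\Pi')$ is regular algebraic in the sense of Clozel (Definition \ref{def:split}). Since $\Pi$ is cuspidal by assumption, Theorem \ref{thm:regalg=cusp} says this is equivalent to $\Pi_\infty$ being cohomological and essentially tempered. \emph{Second}, using the Künneth rule for $(\g_\infty, K^\circ_\infty)$-cohomology (recalled in Section \ref{sect:gKcoh}), $\Pi_\infty$ is cohomological and essentially tempered if and only if each local factor $\Pi_v$, $v\in V_\infty$, is. \emph{Third}, I split the archimedean places into split and non-split ones. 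At split $v$ the equivalence ``$\Pi_v$ cohomological and essentially tempered $\iff$ $\Pi'_v$ cohomological and essentially tempered'' is trivial because $\Pi_v = \Pi'_v$ by the remark following Theorem \ref{thm:globJL}. At non-split $v$, I invoke Theorem \ref{thm:irrunitJL}: it shows $|LJ|_v$ restricts to a surjection $\mathrm{Coh}_{\mu_v}(G_v) \twoheadrightarrow \mathrm{Coh}_{\mu_v}(G'_v)$ carrying tempered cohomological representations to tempered cohomological representations, and after twisting by $|\det|^{s_v}$ (respectively $\det'^{s_v}$), since $|LJ|_v$ commutes with such twists by construction, the same holds for essentially tempered cohomological representations. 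Thus $\Pi_v$ cohomological essentially tempered implies $\Pi'_v = |LJ|_v(\Pi_v)$ is too.

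\medskip

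For the reverse implication (ii)$\Rightarrow$(i) at a non-split place, I need that if $\Pi'_v$ is cohomological and essentially tempered then its preimage $\Pi_v = JL(\Pi')_v$ under $|LJ|_v$ is as well. Here one uses that by Section \ref{sect:tempcoh} the cohomological essentially tempered representation of $G'_v$ is \emph{unique} — namely $J'(w_v, \ell_v)$ for a uniquely determined $w_v$ and $\ell_v$ — and likewise the cohomological essentially tempered representation of $G_v$ is the unique $J(w_v, \ell_v)$; the explicit formula in Theorem \ref{thm:irrunitJL} (together with Lemma \ref{lem:localJL} in the base case) shows $|LJ|_v(J(w_v,\ell_v)) = J'(w_v,\ell_v)$. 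Since $\Pi_v$ is the $|LJ|_v$-preimage of $\Pi'_v$ determined by $JL$, and since $JL(\Pi')$ being a discrete series lift forces $\Pi_v$ to be essentially tempered at \emph{all} finite places and hence (by the compatibility of infinitesimal characters forced by the cohomology condition at the other archimedean places, which follows once we know $\Pi$ is cuspidal — recall $JL(\Pi')$ is cuspidal by hypothesis) the archimedean constraint propagates: $\Pi_v$ must equal $J(w_v,\ell_v)$. Assembling over all $v$, $\Pi_\infty$ is cohomological and essentially tempered, so by Theorem \ref{thm:regalg=cusp} (applicable since $\Pi$ is cuspidal) $\Pi$ is regular algebraic, i.e., $\Pi'$ is.

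\medskip

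\textbf{The main obstacle.} The delicate point is the reverse direction at non-split places: from ``$\Pi'_v$ is cohomological'' one wants to conclude that the \emph{specific} representation $JL(\Pi')_v$ picked out by the global lift is cohomological, not merely that \emph{some} preimage in the fiber of $|LJ|_v$ is. The fibers of $|LJ|_v$ are non-trivial (they are described by the equivalence relation \eqref{eq:equivrel}), so a priori $JL(\Pi')_v$ could be a non-cohomological member of the fiber. The resolution is that among all representations in $U_{cp}(G_v)$ mapping to a given essentially tempered cohomological $\Pi'_v$, the essentially tempered one is unique and is itself cohomological — so one must check that $JL(\Pi')_v$ is forced to be essentially tempered. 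This uses that $JL$ sends (essentially) discrete series to (essentially) discrete series globally, hence $JL(\Pi')_v$ is essentially square-integrable at \emph{every} place, in particular essentially tempered at $v$; combined with the cohomology of $\Pi'_v$ this pins down $JL(\Pi')_v = J(w_v,\ell_v)$, which is cohomological. I would write this carefully, as it is the one spot where the global hypothesis on $JL(\Pi')$ genuinely enters the archimedean analysis. (Remark \ref{ex:(ii)->(i)} notes that in fact (ii)$\Rightarrow$(i) should not need cuspidality of $JL(\Pi')$; for the clean statement proved here I keep the hypothesis so that Theorem \ref{thm:regalg=cusp} applies on the nose to $\Pi$.)
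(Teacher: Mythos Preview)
Your direction (i)$\Rightarrow$(ii) is correct and follows the paper's proof essentially verbatim: reduce to $\Pi=JL(\Pi')$ via Theorem~\ref{thm:regalg=cusp}, then push across $|LJ|_v$ at each non-split archimedean place using Theorem~\ref{thm:irrunitJL}.

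Your direction (ii)$\Rightarrow$(i), however, contains a genuine error. You write that ``$JL$ sends (essentially) discrete series to (essentially) discrete series globally, hence $JL(\Pi')_v$ is essentially square-integrable at \emph{every} place, in particular essentially tempered at $v$''. This is false: membership in the global discrete spectrum does \emph{not} force local components to be essentially square-integrable, nor even essentially tempered. Already the trivial representation of $GL_n(\A)$ lies in the discrete spectrum with non-tempered local components, and even for cuspidal $\Pi$ the local temperedness at $v$ is the generalized Ramanujan conjecture, not a known fact you may invoke. So your step~4 does not go through, and the vague parenthetical about ``compatibility of infinitesimal characters \dots\ the archimedean constraint propagates'' does not repair it.

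The paper closes this gap differently. At a non-split archimedean place one knows $|LJ|_v(JL(\Pi')_v)\cong J'(w_v,\ell_v)$. Since $JL(\Pi')$ is \emph{cuspidal}, Shalika's theorem makes $JL(\Pi')_v$ \emph{generic}; by Vogan's structural result (\cite{vogangen}, Thm.~6.2) a generic archimedean representation is fully induced from (limits of) essentially discrete series representations. Because $|LJ|_v$ commutes with parabolic induction and tensor products, Lemma~\ref{lem:localJL} then forces the inducing data to be precisely the $D(w_v,\ell_{i,v})$, so $JL(\Pi')_v\cong J(w_v,\ell_v)$, which is cohomological and essentially tempered. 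This is where the cuspidality hypothesis on $JL(\Pi')$ actually enters: it supplies genericity, not local temperedness. You should replace your ``global discrete $\Rightarrow$ local square-integrable'' step with this genericity argument.
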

\begin{proof}
(i)$\Rightarrow$(ii): By assumption $JL(\Pi')$ is a cuspidal, regular algebraic representation of $G(\A)$. So, Thm.\ \ref{thm:regalg=cusp} implies that $JL(\Pi')_\infty=\bigotimes_{v \in V_\infty}JL(\Pi')_v$ is cohomological and essentially tempered. By the description of the global Jacquet-Langlands map, cf. Thm.\ \ref{thm:globJL} and Cor.\ \ref{cor:globalJL}, there is an isomorphism $JL(\Pi')_v\cong\Pi'_v$ at all split places $v\in V$. In particular, $\Pi'_v$ is proved to be cohomological and essentially tempered at all split places $v\in V_\infty$. So, let us assume from now on that $v$ is a real, non-split place. Again by Thm.\ \ref{thm:globJL} and Cor.\ \ref{cor:globalJL} we obtain
$$\Pi'_v \cong |LJ|_v(JL(\Pi')_v).$$
As $v$ is supposed to be non-split, we have $n=2k$ and so Thm.\ \ref{thm:regalg=cusp} (see also Rem. \ref{rem:regalg=cusp}) implies that
$$JL(\Pi')_v\cong J(w_v,\ell_v),$$
for some integer $w_v$ and $k$-tuple $\ell_v$ as in Section \ref{sect:tempcoh}. But now Thm.\ \ref{thm:irrunitJL} shows that $|LJ|_v(J(w_v,\ell_v))\cong J'(w_v,\ell_v)$. Putting the pieces together we finally see that
$$\Pi'_v \cong J'(w_v,\ell_v).$$
According to Thm.\ \ref{prop:irrunitG'} (resp., Section \ref{sect:tempcoh}), $\Pi'_v$ is therefore cohomological and essentially tempered and hence so is $\Pi'_\infty=\bigotimes_{v \in V_\infty}\Pi'_v$.

(ii)$\Rightarrow$(i): Assume that $\Pi'_\infty$ is cohomological and essentially tempered. By definition, cf. Def. \ref{def:nonsplit}, we need to show that $JL(\Pi')$ is regular algebraic. As this is a local condition at the archimedean places, Thm.\ \ref{thm:regalg=cusp} together with the standing assumption that $JL(\Pi')$ is cuspidal, permits us again to focus on the non-split archimedean places $v\in V_\infty$. Let $v\in V_\infty$ be such a place and let $E_{\mu_v}$, $\mu_v=(\mu_{v,1},...,\mu_{v,n})$, be a highest weight representation of $G'_v=GL_k(\H)$ with respect to which $\Pi'_v$ is cohomological. We know from Section \ref{sect:tempcoh} that $E_{\mu_v}$ is necessarily essentially self-dual, so there is an integer $w_v\in\Z$ such that $w_v=\mu_{v,i}+\mu_{v,n-i+1}$, $1\leq i\leq n$. As $\Pi'_v$ is furthermore assumed to be essentially tempered, Thm.\ \ref{prop:irrunitG'}, resp., our explanations of Section \ref{sect:tempcoh}, imply that $$\Pi'_v\cong J'(w_v,\ell_v),$$ the $k$-tuple $\ell_v=(\ell_{1,v},...,\ell_{k,v})$ being defined as in Section \ref{sect:tempcoh}. By the description of the global Jacquet-Langlands map, we hence obtain $$J'(w_v,\ell_v) \cong |LJ|_v(JL(\Pi')_v).$$ Next, recall that $JL(\Pi')_v$ -- being the local component of a cuspidal representation -- must be generic (see Shalika \cite{shal}, corollary on p. 190). Therefore $JL(\Pi')_v$ is induced from representations in the (limits of) essentially discrete series, cf. Vogan \cite{vogangen}, Thm.\ 6.2. But since $|LJ|_v$ commutes with induction and tensor products, these representations must be all in $U_{cp}(GL_2(\R))$ and so Lem.\ \ref{lem:localJL} shows that $JL(\Pi')_v$ is induced from the representations $D(w_v,\ell_{i,v})$, $1\leq i\leq k$. Whence we have by construction
$$JL(\Pi')_v\cong J(w_v,\ell_v)$$
and so Thm.\ \ref{prop:irrunitG} (resp., Section \ref{sect:tempcoh}) yields that $$JL(\Pi')_\infty=\bigotimes_{v \in V_\infty}JL(\Pi')_v$$ is cohomological and essentially tempered. By Thm.\ \ref{thm:regalg=cusp}, $JL(\Pi')$ is therefore regular algebraic. Now the proof is complete.
\end{proof}

\begin{rem}\label{ex:(ii)->(i)}
Implication (ii)$\Rightarrow$(i) does not seem to need the assumption that $JL(\Pi')$ is cuspidal. First, observe that -- assuming the validity of (ii) -- $\Pi'$ is necessarily cuspidal by Wallach \cite{wallach}, Thm.\ 4.3. See also Clozel \cite{clozel2}, Prop.\ 4.10. By the same references, $JL(\Pi')$ is cuspidal, if there is a split archimedean place $v\in V_\infty$. It therefore suffices to prove the implication (ii)$\Rightarrow$(i) under the assumption that there are only non-split places $v\in V_\infty$. It seems that $\Pi'_v$, being cohomological and essentially tempered, satisfies
$$JL(\Pi')_v\cong A_{\q_{\underline n}}(\lambda)\otimes\textrm{sgn}^\varepsilon|\textrm{det}|^{-w/2},\quad\quad\quad(\heartsuit)$$
with $\underline n=[0,2,2,...,2]$ or $\underline n=[2,2,...,2]$. If $\underline n=[0,2,2,...,2]$, then $JL(\Pi')_v$ is essentially tempered and we are again in the situation considered in Thm.\ \ref{thm:regalgG'}. If $\underline n=[2,2,...,2]$, then the local Langlands parameter of $JL(\Pi')_v$ is given by
$$(z^\frac{\ell_{1}-w}{2}(\overline z)^\frac{-\ell_1-w}{2})\oplus(z^\frac{-\ell_{1}-w}{2}(\overline z)^\frac{\ell_1-w}{2})\oplus...\oplus(z^\frac{\ell_{k-1}-w}{2}(\overline z)^\frac{-\ell_{k-1}-w}{2})\oplus(z^\frac{-\ell_{k-1}-w}{2}(\overline z)^\frac{\ell_{k-1}-w}{2})\oplus(z\overline z)^{\frac{\ell_k-w}{2}}\oplus(z\overline z)^{\frac{-\ell_k-w}{2}},$$
with $\ell=(\ell_1,...,\ell_k)$ and $w$ as in Section \ref{sect:tempcoh}; in particular, $\ell_k=1$. Indeed, for $A_{\q_{\underline n}}(\lambda)$ with $\underline n=[2,2,...,2]$ to exist, we must have $\lambda_k=0$ by \eqref{eq:extending}. As furthermore $\rho_k^{\u_{\underline n}}=0$, the representation giving $(z\overline z)^{\frac{\ell_k-w}{2}}\oplus(z\overline z)^{\frac{-\ell_k-w}{2}}$ is $|\det|^{-w/2}$, which shows $\ell_k=1$. By the definition of the numbers $\ell_i$ we see that $\ell_i-w=2(-\mu_{n-i+1}+k-i)+1$ is odd, so $JL(\Pi')_v$ would be algebraic. But since also $\ell_1>\ell_2>...>\ell_{k-1}>\ell_k= 1$, $JL(\Pi')_v$ would also be regular algebraic by Rem. \ref{rem:algregalg}. As a consequence, in order to show the implication (ii)$\Rightarrow$(i) without the assumption that $JL(\Pi')$ is cuspidal, it suffices to prove $(\heartsuit)$.
\end{rem}

In contrast, the implication (i)$\Rightarrow$(ii) in Thm.\ \ref{thm:regalgG'} fails, if one drops the assumption of the cuspidality of $JL(\Pi')$ -- even if one supposes that $\Pi'$ is cuspidal. This is shown in the next example:

\begin{ex}[Grobner \cite{grob}, Thm.\ 4.1]\label{ex:(i)not(ii)}
Let $F=\Q$, $m=2$ and $d=2$, i.e., $G'$ is the group of $GL_2(D)$ over a quaternion division algebra $D$ over $\Q$. Assume furthermore that the infinite place is non-split, so $G'_\infty=GL_2(\H)$. Using the global Jacquet-Langlands map of \cite{ioanrenard}, it was shown by the first named author in \cite{grob}, Thm.\ 4.1, that for any integer $k\geq 0$, there is a cuspidal automorphic representation $\Pi'$ of $G'(\A)$ whose component at infinity $\Pi'_\infty$ is given by the Langlands quotient of
$$\textrm{Ind}^{GL_2(\H)}_{GL_1(\H)\times GL_1(\H)}[\textrm{Sym}^{k+1}\C^2\textrm{det}'^{1/2}\otimes \textrm{Sym}^{k+1}\C^2\textrm{det}'^{-1/2}].$$
Therefore, $\Pi'_\infty$ is not essentially tempered, cf. \cite{grob}, Prop.\ 3.5. (Although, if $k$ is even, $\Pi'_\infty$ is cohomological with respect to the highest weight representation $E_\mu$ of $G'_\infty$ with $\mu=(\frac{k}{2},\frac{k}{2},-\frac{k}{2},-\frac{k}{2})$; see again \cite{grob}, Prop.\ 3.5, resp., our Thm.\ \ref{prop:irrunitG'}.)

But if $k$ is chosen to be even, we claim that $\Pi'$ is regular algebraic. In fact, the Langlands parameter $\tau(JL(\Pi')_\infty)$ is given by Section \ref{sect:LLC} as
$$z\mapsto (z^\frac{(k+3)}{2}(\overline z)^\frac{-(k+1)}{2})\oplus (z^\frac{-(k+1)}{2}(\overline z)^\frac{(k+3)}{2})\oplus (z^\frac{(k+1)}{2}(\overline z)^\frac{-(k+3)}{2})\oplus (z^\frac{-(k+3)}{2}(\overline z)^\frac{(k+1)}{2}),$$
whose exponents are in $\frac12+\Z$, if $k$ is even. So, $\Pi'$ is algebraic for even $k$. Furthermore, the numbers
$$p_{1,\infty}=\frac{(k+3)}{2}, p_{2,\infty}=\frac{-(k+1)}{2}, p_{3,\infty}=\frac{(k+1)}{2}, p_{4,\infty}=\frac{-(k+3)}{2}$$
can never be pairwise equal. Hence $\Pi'$ is regular algebraic for even $k$ by Rem. \ref{rem:algregalg}.
\end{ex}

\subsection{A purity lemma}
We conclude this section by stating a generalization of Clozel's ``Lemme de puret\'e''.

\begin{lem}\label{lem:pureteG'}
If $\Pi'\in\mathscr D(G')$ is algebraic and $JL(\Pi')$ cuspidal, then there is a {\sf w} $\in\Z$ such that for all $v\in V_\infty$, the characters of $\C^*$ associated to $JL(\Pi')_v|\cdot|_v^{\frac{1-n}{2}}$ are of the form $z\mapsto z^p(\overline z)^q$ with $p+q=$ {\sf w}.
\end{lem}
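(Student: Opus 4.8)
The plan is to observe that, once one unwinds the definition of \emph{algebraic} for $G'$, the statement is nothing but Clozel's purity lemma applied to the Jacquet--Langlands lift. First I would invoke Corollary \ref{cor:globalJL} to note that $\Pi := JL(\Pi')$ lies in $\mathscr D(G)$, and then Definition \ref{def:nonsplit}: by construction $\Pi'\in\mathscr D(G')$ is algebraic precisely when $\Pi$ is algebraic in the sense of Clozel, i.e.\ in the sense of Definition \ref{def:split}. Combining this with the standing hypothesis that $JL(\Pi')$ is cuspidal, we conclude that $\Pi = JL(\Pi')$ is an algebraic and cuspidal element of $\mathscr D(G)$.

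Next I would simply quote Clozel's ``Lemme de puret\'e'', recalled above as Lemma \ref{lem:pureteG}: since $\Pi$ is algebraic and cuspidal, there exists ${\sf w}\in\Z$ such that for every $v\in V_\infty$ the algebraic characters of $\C^*$ attached to $\Pi_v|\cdot|_v^{\frac{1-n}{2}}$ are all of the form $z\mapsto z^p(\overline z)^q$ with $p+q={\sf w}$. Since $\Pi_v = JL(\Pi')_v$ by the very definition of $\Pi$, this is literally the assertion of the lemma; note that the characters in question are automatically algebraic, as $\Pi$ was assumed algebraic, so nothing is lost in stating the conclusion with the weaker word ``characters''.

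There is no genuine obstacle here: the content has been entirely absorbed into the definition of an algebraic representation of $G'$ --- which was deliberately framed via the transfer $JL$ --- together with the split-case result of Clozel. In particular, in contrast to the proof of Theorem \ref{thm:regalgG'}, no archimedean analysis of the local lift $|LJ|_v$ or of the explicit shape of $JL(\Pi')_v$ (as in Section \ref{sect:tempcoh} and Remark \ref{rem:algregalg}) is required; one could of course re-derive the lemma along those lines, tracking Langlands parameters place by place, but that would only reprove what is already contained in Lemma \ref{lem:pureteG}. The single point worth spelling out in the written proof is exactly this chain of implications, so that the reader sees that the hypothesis ``$JL(\Pi')$ cuspidal'' is precisely what makes Clozel's lemma applicable to $\Pi = JL(\Pi')$.
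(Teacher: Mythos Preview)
Your proposal is correct and follows exactly the paper's own argument: the paper's proof is a single sentence noting that the lemma is an immediate consequence of the definition of $\Pi'$ being algebraic together with Lemma~\ref{lem:pureteG}. Your additional remarks about why no local archimedean analysis is needed are accurate but go beyond what the paper records.
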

\begin{proof}
This is an immediate consequence of the definition of $\Pi'$ being algebraic and Lem.\ \ref{lem:pureteG}.
\end{proof}

Again, the cuspidality of $JL(\Pi')$ cannot be weakened to assuming that $\Pi'$ is cuspidal, as the next example shows.

\begin{ex}\label{ex:purity}
Once more consider the representation $\Pi'$ constructed in Example \ref{ex:(i)not(ii)}. It is cuspidal and algebraic, if $k$ is even. Recalling also its Langlands parameter from Example \ref{ex:(i)not(ii)}, one checks that there is no integer {\sf w} $\in\Z$ representing all the sums of the exponents showing up in $\tau(JL(\Pi')_\infty)|\cdot|_\infty^{\frac{1-n}{2}}$.
\end{ex}

\section{Spaces of automorphic cohomology and rational structures}

\subsection{Definition of the $\sigma$-twist, the rationality field and $\F$-structures}\label{sect:deftwist}
Let $\nu$ be any representation of either $G'(\A_f)$ or $G'_v$, $v\in V_f$, on a complex vector space $W$. For $\sigma\in \textrm{Aut}(\C)$, we define the {\it $\sigma$-twist} ${}^\sigma\!\nu$ following Waldspurger \cite{waldsp}, I.1: If $W'$ is a $\C$-vector space which admits a $\sigma$-linear isomorphism $t:W\ra W'$ then we set
$${}^\sigma\!\nu:= t\circ\nu\circ t^{-1}.$$
This definition is independent of $t$ and $W'$ up to equivalence of representations. One may hence always take $W':=W\otimes_\C {}_{\sigma}\C$. This latter vector space is defined as follows: Firstly, since $\C$ is commutative it makes no difference whether we think of $W$ either as a right or a left vector space. For concreteness, we will think of $W$ as a right $\C$-vector space. Secondly, consider $\C$ as a $(\C,\C)$-bimodule, where the left module structure is via an automorphism $\sigma \in$Aut$(\C)$, i.e., $a \cdot_l z := \sigma(a)z,$ and the right module structure is by usual multiplication: $z \cdot_r b = zb.$ Let us denote this bimodule simply as ${}_\sigma\C$.
Now consider the tensor product: $W \otimes_{\C} {}_\sigma\C$, which uses only the left-module structure on ${}_\sigma\C$. For all $a \in \C$, $w \in W$ and $z \in \C$, we have: $aw \otimes z = wa \otimes z = w \otimes \sigma(a)z.$ Further, using the right module structure of ${}_\sigma\C$, we get that $W \otimes_{\C} {}_\sigma\C$ is a right $\C$-vector space, where the scalar multiplication is given by:
$$
a(w \otimes z) = w \otimes za.
$$
It is now a direct consequence of this definition that the canonical map $t : W \to W \otimes_{\C} {}_\sigma\C$ defined by $t(w) = w \otimes 1$
is a $\sigma$-linear isomorphism:
$t(aw) = aw \otimes 1 = w \otimes \sigma(a) = \sigma(a)(w \otimes 1) = \sigma(a)t(w).$\\

On the other hand, let $\nu=E_\mu$ be a highest weight representation of $G'_\infty$ as in Sect.\ \ref{sect:highweights}. The group Aut$(\C)$ acts on the set of embeddings $\Sigma_\infty=\{\iota:F\hra\C\}$ by composition. Hence, we may define ${}^\sigma\!E_\mu$ to be the irreducible representation of $G'_\infty$, whose local factor at the embedding $\iota$ is $E_{\mu_{\sigma^{-1}\iota}}$, i.e., has highest weight $\mu_{\sigma^{-1}\iota}$. This way, $\sigma$ may very well mix up real and complex places. Suppose now that $E_\mu$ is regular. As this is a purely local condition on the factors at the embeddings, so is ${}^\sigma\!E_\mu$.\\

Recall also the definition of the rationality field of a representation from \cite{waldsp}, I.1. If $\nu$ is any of the representations considered above, then let $\mathfrak S(\nu)$ be the group of all automorphisms $\sigma\in \textrm{Aut}(\C)$ such that ${}^\sigma\!\nu\cong\nu $:
$$\mathfrak S(\nu):=\{\sigma\in \textrm{Aut}(\C)|{}^\sigma\!\nu\cong\nu\}.$$
Then the {\it rationality field} $\Q(\nu)$ is defined as
$$\Q(\nu):=\{z\in\C| \sigma(z)=z \textrm{ for all } \sigma\in\mathfrak S(\nu)\}.$$
As a third ingredient we recall that a group representation $\nu$ on a $\C$-vector space $W$ is said to be {\it defined over a subfield $\F\subset\C$}, if there is a $\F$-vector subspace $W_\F\subset W$, stable under the group action, and such that the canonical map $W_\F\otimes_\F\C\rightarrow W$ is an isomorphism. In this case, we also say that $(\nu,W)$ has an {\it $\F$-structure}.

In particular, if $E_\mu=\bigotimes_{v\in V_\infty} E_{\mu_v}$ is a highest weight representation of $G'_\infty$, then the above definitions can be applied to $E_\mu$. So, let $\Q(E_\mu)$ be the rationality field of $E_\mu$ and let $L/F$ be a minimal algebraic field extension of $F$, such that $D$ splits over
$$\Q(\mu):=\Q(E_\mu)\cdot L,$$
the compositum of the fields $\Q(E_\mu)$ and $L$, i.e., $D\otimes_F\Q(\mu)\cong M_d(\Q(\mu))$.

\begin{lem}\label{lem:Erational}
Let $E_\mu$ be an irreducible highest weight representation of $G'_\infty$ as in Section \ref{sect:highweights}. Let $\sigma\in$ \emph{Aut}$(\C)$ and consider the $\sigma$-twisted representation ${}^\sigma\!E_\mu$ of $G'_\infty$. As an abstract representation of the diagonally embedded group $G'(F)\hra G'_\infty$, ${}^\sigma\!E_\mu$ is isomorphic to $E_\mu\otimes_{\C} {}_{\sigma}\C$. Furthermore, as a representation of $G'(F)$, $E_\mu$ is defined over $\Q(\mu)$, which is in turn a number field. 
\end{lem}
\begin{proof}
Let $\psi$ be the isomorphism of $\C$-vector spaces $\psi:{}^\sigma\!E_\mu\ra E_\mu\otimes_{\C} {}_{\sigma}\C$, given by the assignment
$\psi(\otimes_{\iota\in \Sigma_\infty} w_\iota):=\otimes_{\iota\in \Sigma_\infty} \sigma^{-1}(w_{\sigma \iota})$. Applying $\sigma^{-1}$ to a vector $w_{\sigma \iota}\in E_{\mu_{\iota}}$ is meant as applying $\sigma^{-1}$ to the coordinates of $w_{\sigma \iota}$ in terms of the standard basis of the standard representation $\C^n$: This is well-defined since at every real archimedean place, $E_{\mu_v}=E_{\mu_\iota}$ is a subrepresentation of
$$\nu_{\mu_v}:=\bigotimes_{i=1}^{n-1}\textrm{Sym}^{\mu_{v,i}-\mu_{v,i+1}}\left(\Lambda^i\C^n\right)\otimes \textrm{det}'^{\mu_{v,n}},$$
while at every complex place, $E_{\mu_v}$ is a subrepresentation of $\nu_{\mu_{\iota_v}}\otimes\bar\nu_{\mu_{\bar\iota_v}}$. Recall that $g\in G'(F)$, being diagonally embedded into $G'_\infty$, acts on $w\in {}^\sigma\! E_\mu$ as $g\cdot w=\otimes_{\iota\in \Sigma_\infty} \iota(g)w_\iota$. So, invoking the Skolem-N\oe ther-Theorem,
$$\psi(g\cdot w)= \otimes_{\iota\in \Sigma_\infty} \sigma^{-1}((\sigma\circ \iota)(g) w_{\sigma \iota})=\otimes_{\iota\in \Sigma_\infty} \iota(g)\sigma^{-1}(w_{\sigma \iota})=g\cdot\psi(w),$$
which shows that $\psi$ induces an isomorphism of the $G'(F)$-modules ${}^\sigma\!E_\mu$ and $E_\mu\otimes_{\C} {}_{\sigma}\C$.

In order to prove the second assertion, let us view $E_\mu$ as a representation of $G(F)=GL_n(F)$ via its diagonal embedding into $G_\infty$. Exactly as in Clozel \cite{clozel}, p.122, one sees that $E_\mu$ is defined as a representation of $G(F)$ over $\Q(E_\mu)$, which in turn is also a number field. Let $E_{\mu,\Q(E_\mu)}$ be a $\Q(E_\mu)$-structure of $E_\mu$ as a representation of $G(F)$. Since the algebraic group $G\times \Q(\mu)=GL_n/\Q(\mu)$ is connected and char$(\Q(\mu))=0$, the $\Q(\mu)$-subspace $E_{\mu,\Q(\mu)}:=E_{\mu,\Q(E_\mu)}\otimes_{\Q(E_\mu)}\Q(\mu)$ of $E_\mu$ is stable by the action of $G(\Q(\mu))$. Hence, it is stable by the action of $G'(F)\subseteq G'(\Q(\mu))=G(\Q(\mu))$. Therefore, $E_\mu$ is defined as a representation of $G'(F)$ over the number field $\Q(\mu)$. This proves the second assertion.

\end{proof}

\subsection{Rational structures on the cohomology of geometric spaces}\label{sect:geomcoh}
Consider now the quotient
$$S_{G'}:= G'(F)\backslash G'(\A)/K'^\circ_\infty,$$
where we recall that the group $\R_+$ is ``hidden'' as a diagonal subgroup of $K'^\circ_\infty$. This space is the projective limit of finite disjoint unions of orbifolds, cf.\ Rohlfs \cite{roh}, Prop.\ 1.9 together with Borel \cite{bor1}, Thm.\ 5.1. Let $E_\mu$ be a highest weight representation of $G'_\infty$. It defines a sheaf $\mathcal E_\mu$ on $S_{G'}$, by letting $\mathcal E_\mu$ be the sheaf with espace \'etal\'e $G'(\A)/K'^\circ_\infty \times_{G'(F)} E_\mu$ with the discrete topology on $E_\mu$. Hence, the sheaf cohomology spaces
$$H^q(S_{G'},\mathcal E_\mu)$$
are defined. These are $G'(\A_f)$-modules. Moreover, the sheaf-cohomology with compact support $H_c^q(S_{G'},\mathcal E_\mu)$ is well-defined, too, and there is the natural map
$$H_c^q(S_{G'},\mathcal E_\mu)\rightarrow H^q(S_{G'},\mathcal E_\mu).$$
We denote its image, following Harder, by $H_!^q(S_{G'},\mathcal E_\mu)$ and call it {\it interior cohomology}.

It will be important for us to interpret interior cohomology as the kernel of another map. Therefore, recall the adelic Borel-Serre Compactification $\overline S_{G'}$ of
$S_{G'}$, and its basic properties. (For this we refer to Borel--Serre \cite{boserre} as the original source and to Rohlfs \cite{roh} for the adelic setting.) It is a compact space with boundary $\partial(\overline S_{G'})$ and the inclusion
$S_{G'}\hra\overline S_{G'}$ is a homotopy equivalence. Furthermore, the sheaf $\mathcal E_\mu$ extends to a sheaf on $\overline S_{G'}$ and there is the natural restriction morphism of $G'(\A_f)$-modules
$$res^q: H^q(S_{G'},\mathcal E_\mu)\cong H^q(\overline S_{G'},\mathcal E_\mu)\ra H^q(\partial(\overline S_{G'}),\mathcal E_\mu).$$
It is a basic fact that
\begin{equation}\label{eq:intcoh}
H_!^q(S_{G'},\mathcal E_\mu) = \ker (res^q),
\end{equation}
which follows from considering the long exact sequence in sheaf-cohomology given by the pair $(\overline S_{G'},\partial(\overline S_{G'}))$. We obtain

\begin{lem}\label{lem:Qstructures}
The $G'(\A_f)$-modules $H^q(S_{G'},\mathcal E_\mu)$ and $H_!^q(S_{G'},\mathcal E_\mu)$ are defined over $\Q(\mu)$.
\end{lem}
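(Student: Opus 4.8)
The plan is to build an explicit $\Q(\mu)$-model of each cohomology group at finite level and then pass to the limit over open compact subgroups. By Lemma \ref{lem:Erational} the representation $E_\mu$ carries a $\Q(\mu)$-structure $E_{\mu,\Q(\mu)}$, and being a field of definition in the sense of Section \ref{sect:deftwist} this $\Q(\mu)$-subspace is stable under the diagonally embedded group $G'(F)$ -- this is precisely the set-up of Clozel \cite{clozel}, p.\ 122 and Raghuram--Shahidi \cite{raghuram-shahidi-imrn}, Sections 3.1 and 3.3, where $E_{\mu,\Q(\mu)}$ arises as the base change to $\Q(\mu)$ of the algebraic representation of $\mathrm{Res}_{F/\Q}G'$ underlying $E_\mu$, so that $G'(F)=(\mathrm{Res}_{F/\Q}G')(\Q)$ acts $\Q(\mu)$-rationally. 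Consequently, for every open compact subgroup $K_f\subset G'(\A_f)$, the sheaf $\mathcal E_\mu$ on the finite-level space $S_{K_f}:=G'(F)\backslash G'(\A)/K'^\circ_\infty K_f$ descends to a sheaf $\mathcal E_{\mu,\Q(\mu)}$ of $\Q(\mu)$-vector spaces with $\mathcal E_{\mu,\Q(\mu)}\otimes_{\Q(\mu)}\C=\mathcal E_\mu$, as do its extensions to the Borel--Serre compactification $\overline S_{K_f}$ and its boundary $\partial(\overline S_{K_f})$.

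First I would fix, for $K_f$ small enough, a finite triangulation of the compact space $\overline S_{K_f}$ restricting to a triangulation of $\partial(\overline S_{K_f})$. The associated simplicial cochain complexes $C^\bullet(\overline S_{K_f},\mathcal E_\mu)$ and $C^\bullet(\partial(\overline S_{K_f}),\mathcal E_\mu)$ are bounded complexes of finite-dimensional $\C$-vector spaces obtained by extension of scalars along $\Q(\mu)\hookrightarrow\C$ from the evident $\Q(\mu)$-complexes built out of $\mathcal E_{\mu,\Q(\mu)}$. Since $\C$ is flat over the field $\Q(\mu)$, taking cohomology of a bounded complex commutes with this base change, so $H^q(S_{K_f},\mathcal E_\mu)\cong H^q(\overline S_{K_f},\mathcal E_\mu)$ and $H^q(\partial(\overline S_{K_f}),\mathcal E_\mu)$ acquire natural $\Q(\mu)$-structures, and the restriction map $res^q$ is the base change of a $\Q(\mu)$-linear map; hence by \eqref{eq:intcoh} the interior cohomology $H_!^q(S_{K_f},\mathcal E_\mu)=\ker(res^q)$ also acquires a $\Q(\mu)$-structure, kernels likewise being preserved by flat base change. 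For non-neat $K_f$ one first passes to a neat normal subgroup $K_f'\trianglelefteq K_f$ and takes invariants under the finite group $K_f/K_f'$, which does not affect rationality since the coefficient field contains $\Q$.

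Next I would verify compatibility with the maps defining $S_{G'}$ as a projective limit and with the $G'(\A_f)$-action. For $K_f'\subset K_f$ the covering $S_{K_f'}\to S_{K_f}$, and for $g\in G'(\A_f)$ the right-translation isomorphism $S_{K_f}\to S_{g^{-1}K_f g}$, both lift to the relevant espaces \'etal\'es using the identity on $E_{\mu,\Q(\mu)}$ -- the adelic translation is concentrated at the finite places and does not touch the archimedean factor through which $G'(F)$ acts on $E_\mu$ -- so the induced maps on cohomology are $\Q(\mu)$-rational. Passing to the direct limit over $K_f$, and using that $-\otimes_{\Q(\mu)}\C$ commutes with direct limits, yields $G'(\A_f)$-stable $\Q(\mu)$-structures on $H^q(S_{G'},\mathcal E_\mu)=\varinjlim_{K_f}H^q(S_{K_f},\mathcal E_\mu)$ and on $H_!^q(S_{G'},\mathcal E_\mu)$, which is the assertion.

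The only point requiring real care, rather than a genuine obstacle, is the bookkeeping in the first step: that the $\Q(\mu)$-structure of Lemma \ref{lem:Erational} really is $G'(F)$-equivariant for the diagonal embedding (so that $\mathcal E_{\mu,\Q(\mu)}$ exists as a sheaf) and is untouched by Hecke translations. Once this is granted, the rest is the standard comparison of sheaf cohomology of a compact triangulated space with simplicial cochains, together with exactness of $-\otimes_{\Q(\mu)}\C$. An equivalent alternative is to note that the $\sigma$-linear isomorphisms $\sigma^*$ of Section \ref{sect:geomcoh}, for $\sigma$ ranging over $\mathfrak S(E_\mu)$, can be normalized at finite level into a semilinear $\mathfrak S(E_\mu)$-action whose fixed points give the $\Q(\mu)$-structure by Galois descent; I would record this but carry out the cochain-complex argument as the main one.
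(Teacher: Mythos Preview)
Your proof is correct and follows essentially the same approach as the paper: both use the $\Q(\mu)$-structure $E_{\mu,\Q(\mu)}$ from Lemma~\ref{lem:Erational} to compute sheaf cohomology via Betti (singular/simplicial) cohomology with $\Q(\mu)$-coefficients, and then deduce the statement for interior cohomology from its description \eqref{eq:intcoh} as the kernel of $res^q$. The paper's argument is terse---it simply asserts that Betti cohomology with coefficients in $E_{\mu,\Q(\mu)}$ gives the $\Q(\mu)$-structure---whereas you unpack this by working at finite level with triangulations of the Borel--Serre compactification, checking Hecke-equivariance, and passing to the direct limit; this added detail is helpful but not a different strategy.
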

\begin{proof}
Both assertions follow from the fact that sheaf-cohomology can be computed using Betti-cohomology. To be more precise, we recall from Lem.\ \ref{lem:Erational} that $E_\mu$ is defined over the field $\Q(\mu)$ as a representation of $G'(F)$. Let $E_{\mu, \Q(\mu)}$ be a $\Q(\mu)$-structure on $E_\mu$ and let $H^q_B(S_{G'},E_{\mu, \Q(\mu)})$ denote the Betti-cohomology of $S_{G'}$ with coefficients in $E_{\mu, \Q(\mu)}$. This is a $G'(\A_f)$-stable $\Q(\mu)$-vector space and defines a $\Q(\mu)$-structure:
$$H^q(S_{G'},\mathcal E_\mu)\cong H^q_B(S_{G'},E_{\mu, \Q(\mu)})\otimes_{\Q(\mu)}\C.$$
Recalling \eqref{eq:intcoh}, this also shows the analogous assertion for $H_!^q(S_{G'},\mathcal E_\mu)$.
\end{proof}
Compare the previous lemma also with Clozel \cite{clozel}, p.122-123.\\
As another ingredient observe that for all $\sigma\in \textrm{Aut}(\C)$, there are natural $\sigma$-linear, $G'(\A_f)$-equivariant isomorphisms
\begin{equation}\label{eq:sigmaiso}
\sigma^*: H^q(S_{G'},\mathcal E_\mu) \rightarrow H^q(S_{G'},{}^\sigma\!\mathcal E_\mu)
\end{equation}
and
\begin{equation}\label{eq:sigmaiso!}
\sigma_!^*: H^q_!(S_{G'},\mathcal E_\mu) \rightarrow H^q_!(S_{G'},{}^\sigma\!\mathcal E_\mu).
\end{equation}

\subsection{Combining automorphic forms and geometry}\label{sect:autcoh1}
Let $E_\mu$ be a highest weight representation of $G'_\infty$ and let $\mathcal J=\mathcal{J}_{E_\mu}$ be the ideal of the center of the universal enveloping algebra of $\g_\C'$, which annihilates the contragredient representation of $E_\mu$. This ideal is of finite codimension and hence the subspace $\mathcal A_\mathcal J(G')$ of automorphic forms on $\R_+ G'(F)\backslash G'(\A)$, which are annihilated by some power of $\mathcal J$ is defined, cf. Section \ref{sect:automf}. Suppose furthermore that $E_\mu|_{\R_+}= 1$ in this subsection. This is not the most general setup (instead one should be working with $G'(\A)^{(1)}$, cf.\ Clozel \cite{clozel} p.\ 123), but merely provides a convenient normalization. Then the link between sheaf cohomology and automorphic forms is provided by

\begin{thm}[Franke \cite{franke}, Thm.\ 18]\label{thm:franke}
There is an isomorphism of $G'(\A_f)$-modules
$$H^q(S_{G'},\mathcal E_\mu)\cong H^q(\g_\infty',K'^\circ_\infty,\mathcal A_\mathcal J(G')\otimes E_\mu).$$
\end{thm}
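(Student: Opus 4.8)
The plan is to obtain this identification by chaining together three facts, of which only the middle one is deep and all of which are in the literature; in fact the displayed statement is essentially \cite{franke}, Thm.~18 itself, so the ``proof'' will amount to explaining how that reference applies once the normalization $E_\mu|_{\R_+}=1$ is in force.

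First I would recall the classical translation of sheaf cohomology on the locally symmetric space into relative Lie algebra cohomology. At finite level $S_{G'}$ is a disjoint union of arithmetic quotients of the symmetric space $G'_\infty/K'^\circ_\infty$, and $\mathcal E_\mu$ is the local system attached to the $G'(F)$-module $E_\mu$ (the hypothesis $E_\mu|_{\R_+}=1$ is exactly what is needed for this local system to descend through the $\R_+\subset K'^\circ_\infty$ quotient). Since $S_{G'}\hra\overline S_{G'}$ is a homotopy equivalence, the sheaf cohomology of $S_{G'}$ agrees with that of the compact Borel--Serre space, and the de Rham complex of $\mathcal E_\mu$-valued forms, rewritten adelically and passed to the $K'_\infty$-finite part, yields a $G'(\A_f)$-equivariant isomorphism
$$H^q(S_{G'},\mathcal E_\mu)\cong H^q(\g'_\infty,K'^\circ_\infty,\ C^\infty_{\mathrm{umg}}(G'(F)\backslash G'(\A))\otimes E_\mu),$$
where $C^\infty_{\mathrm{umg}}$ denotes smooth functions of uniformly moderate growth; compare Borel--Wallach \cite{bowa}, VII.

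Next I would invoke Franke's comparison theorem: the inclusion of the space of automorphic forms into $C^\infty_{\mathrm{umg}}(G'(F)\backslash G'(\A))$ induces an isomorphism on $(\g'_\infty,K'^\circ_\infty)$-cohomology with any finite-dimensional coefficient system. It then remains only to replace ``all automorphic forms'' by $\mathcal A_\mathcal J(G')$, which is pure bookkeeping of infinitesimal characters: by Wigner's lemma (\cite{bowa}, I.4) the relative Lie algebra cohomology $H^\bullet(\g'_\infty,K'^\circ_\infty,V\otimes E_\mu)$ vanishes unless $V$ carries the generalized infinitesimal character of $E^{\sf v}_\mu$; as the space of automorphic forms decomposes, compatibly with the $(\g'_\infty,K'^\circ_\infty)\times G'(\A_f)$-action, along generalized infinitesimal characters, only the summand annihilated by a power of $\mathcal J=\mathcal J_{E_\mu}$ contributes, and that summand is $\mathcal A_\mathcal J(G')$ by definition. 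Combining the two displayed isomorphisms with this reduction gives the theorem, and one checks along the way that every map is $G'(\A_f)$-equivariant for the Hecke / right-translation actions at the finite places.

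The genuinely hard input is Franke's comparison theorem, whose proof rests on the Langlands spectral decomposition, the meromorphic continuation and functional equations of Eisenstein series, and delicate growth estimates; in the present write-up it enters only as a citation. Everything else — the de Rham identification and the infinitesimal-character bookkeeping — is routine, so the main obstacle is really just to invoke and normalize the reference correctly, in particular tracking the roles of $\R_+$, $K'^\circ_\infty$, and the passage from $\mathcal A(G')$ to $\mathcal A_\mathcal J(G')$.
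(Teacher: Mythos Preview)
Your sketch is essentially correct as an outline of how Franke's theorem is proved, but there is nothing to compare against: the paper does not give a proof of this statement. It is quoted verbatim as \cite{franke}, Thm.~18, and the paper proceeds immediately to the corollary on the decomposition of $H^q(S_{G'},\mathcal E_\mu)$. So the ``paper's own proof'' is simply the citation, and your proposal already says as much in its first paragraph.
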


Abbreviating

$$H_{\{P'\}}^q(G',E_\mu):=H^q(\g_\infty',K'^\circ_\infty,\mathcal A_{\mathcal J,\{P'\}}(G')\otimes E_\mu)$$
and
$$H_{\{P'\},\varphi_{P'}}^q(G',E_\mu):=H^q(\g_\infty',K'^\circ_\infty,\mathcal A_{\mathcal J,\{P'\},\varphi_{P'}}(G')\otimes E_\mu),$$
we obtain

\begin{cor}\label{cor:cohdecP'phi}
The sheaf cohomology of $S_{G'}$ inherits from \eqref{eq:autdecP'} and \eqref{eq:autdecP'phi} a decomposition as $G'(\A_f)$-module:
\begin{eqnarray}\label{eq:cohdecP'phi}
H^q(S_{G'},\mathcal E_\mu) &\cong  & \bigoplus_{\{P'\}} H_{\{P'\}}^q(G',E_\mu)\\
 &\cong  & \bigoplus_{\{P'\}}\bigoplus_{\varphi_{P'}} H_{\{P'\},\varphi_{P'}}^q(G',E_\mu).
\end{eqnarray}
\end{cor}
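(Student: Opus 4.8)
The plan is to deduce the claimed decomposition formally from Franke's Theorem~\ref{thm:franke} together with the additivity of $(\g_\infty',K'^\circ_\infty)$-cohomology, so this is essentially a bookkeeping argument rather than a substantive one.

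First I would apply Theorem~\ref{thm:franke} --- legitimate here since $\mathcal J=\mathcal J_{E_\mu}$ and $E_\mu|_{\R_+}=1$ under the standing assumptions of this subsection --- to rewrite $H^q(S_{G'},\mathcal E_\mu)$ as $H^q(\g_\infty',K'^\circ_\infty,\mathcal A_\mathcal J(G')\otimes E_\mu)$ as $G'(\A_f)$-modules. Next I would feed in the decompositions \eqref{eq:autdecP'} and \eqref{eq:autdecP'phi}: since these are isomorphisms of $G'(\A)$-modules, they are in particular compatible with the $(\g_\infty',K'^\circ_\infty)$-action and with the commuting $G'(\A_f)$-action, and tensoring with the archimedean module $E_\mu$ leaves the $G'(\A_f)$-action unchanged. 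This produces isomorphisms of $(\g_\infty',K'^\circ_\infty)\times G'(\A_f)$-modules
$$\mathcal A_\mathcal J(G')\otimes E_\mu\cong\bigoplus_{\{P'\}}\bigl(\mathcal A_{\mathcal J,\{P'\}}(G')\otimes E_\mu\bigr)\cong\bigoplus_{\{P'\}}\bigoplus_{\varphi_{P'}}\bigl(\mathcal A_{\mathcal J,\{P'\},\varphi_{P'}}(G')\otimes E_\mu\bigr).$$

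Finally I would apply $H^q(\g_\infty',K'^\circ_\infty,-)$ term by term. This is allowed because, in each fixed degree $q$, this cohomology is computed by the standard complex $\mathrm{Hom}_{K'^\circ_\infty}\!\bigl(\Lambda^\bullet(\g_\infty'/\mathrm{Lie}\,K'^\circ_\infty),\,-\,\bigr)$ of Borel--Wallach \cite{bowa}, I.5, a functor built from $\mathrm{Hom}$ out of a fixed finite-dimensional space followed by taking invariants under a fixed group; such a functor commutes with arbitrary direct sums and is $G'(\A_f)$-equivariant as soon as its argument carries a commuting $G'(\A_f)$-action. Unwinding the abbreviations $H^q_{\{P'\}}(G',E_\mu)$ and $H^q_{\{P'\},\varphi_{P'}}(G',E_\mu)$ then yields exactly \eqref{eq:cohdecP'phi} as an isomorphism of $G'(\A_f)$-modules. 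I do not expect any real obstacle; the only point worth flagging is that the inner sum over the cuspidal supports $\varphi_{P'}$ may a priori be infinite, which is harmless precisely because $(\g_\infty',K'^\circ_\infty)$-cohomology commutes with arbitrary direct sums as just noted.
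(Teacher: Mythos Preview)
Your argument is correct and is precisely the implicit reasoning the paper has in mind: the corollary is stated without proof because it follows formally by combining Franke's Theorem~\ref{thm:franke} with the $G'(\A)$-module decompositions \eqref{eq:autdecP'} and \eqref{eq:autdecP'phi} and the additivity of $(\g_\infty',K'^\circ_\infty)$-cohomology. Your remark that the standard complex commutes with arbitrary direct sums, needed to handle the possibly infinite inner sum over $\varphi_{P'}$, is the only subtlety and you have addressed it.
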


\subsection{}\label{sect:autcoh2}
Recall from Section \ref{sect:automf} that the space $\mathcal A_{cusp, \mathcal J}(G')$, i.e., the subspace of all cuspidal automorphic forms in $\mathcal A_{\mathcal J}(G')$, can be identified with the summand $\mathcal A_{\mathcal J,\{G'\}}(G')$ in \eqref{eq:autdecP'}. Hence, we call the summand in \eqref{eq:cohdecP'phi} index by $\{G'\}$ the space of {\it cuspidal cohomology} and denote it
\begin{eqnarray*}
H^q_{cusp}(G',E_\mu) & := & H_{\{G'\}}^q(G',E_\mu)\\
& = & H^q(\g_\infty',K'^\circ_\infty,\mathcal A_{cusp, \mathcal J}(G')\otimes E_\mu).
\end{eqnarray*}
Now, let $\mathcal A_{dis, \mathcal J}(G')$ be the subspace of all square-integrable automorphic forms in $\mathcal A_{\mathcal J}(G')$.
The quotient space
$\R_+ G'(F)\backslash G'(\A)$ has finite volume hence $\mathcal A_{dis, \mathcal J}(G')$
is the space of all $K'^\circ_\infty$-finite, smooth functions $f\in L^2_{dis}(\R_+ G'(F)\backslash G'(\A))$, which are annihilated by some power of $\mathcal J$. We will denote its cohomology by
$$H^q_{dis}(G',E_\mu):=H^q(\g_\infty',K'^\circ_\infty,\mathcal A_{dis, \mathcal J}(G')\otimes E_\mu).$$
Observe that by \eqref{eq:cohdecP'phi}, there is a natural inclusion of $G'(\A_f)$-modules
$$H^q_{cusp}(G',E_\mu)\hookrightarrow H^q_{dis}(G',E_\mu).$$
Let us now refine this picture even more. If we fix a smooth character
$$\omega:\R_+ Z'(F)\backslash Z'(\A)\rightarrow\C^*,$$
then the subspace of functions in $\mathcal A_{dis, \mathcal J}(G')$ (resp., $\mathcal A_{cusp, \mathcal J}(G')$), which have this given central character $\omega$, decomposes as a direct Hilbert sum, the sum ranging over all (equivalence classes) of square-integrable (resp., cuspidal), irreducible automorphic representations with central character $\omega$, which are annihilated by some power of $\mathcal J$:
\begin{equation}\label{eq:disdecmp}
\mathcal A_{dis, \mathcal J}(G',\omega)\cong\widehat{\bigoplus}_{\Pi'\textrm{ square-int.}}\Pi',
\end{equation}
resp.,
\begin{equation}\label{eq:cuspdecmp}
\mathcal A_{cusp, \mathcal J}(G',\omega)\cong\widehat{\bigoplus}_{\substack{ \Pi'\textrm{ cuspidal.}}}\Pi'.
\end{equation}
Once more, these spaces define in a natural way $(\g_\infty',K'^\circ_\infty)$-modules and if $E_\mu$ is a highest weight representation of $G'_\infty$, we define
\begin{eqnarray}\label{eq:discohdec}
H^q_{dis,\omega}(G',E_\mu) & := & H^q(\g_\infty',K'^\circ_\infty,\mathcal A_{dis, \mathcal J}(G',\omega)\otimes E_\mu)\\
 & \cong & \bigoplus_{\substack{ \Pi'\textrm{ square-int.}}} H^q(\g_\infty',K'^\circ_\infty,\Pi'_\infty\otimes E_\mu)\otimes\Pi'_f
\end{eqnarray}
and
\begin{eqnarray}
H^q_{cusp,\omega}(G',E_\mu) & := & H^q(\g_\infty',K'^\circ_\infty,\mathcal A_{cusp, \mathcal J}(G',\omega)\otimes E_\mu)\\
& \cong & \bigoplus_{\substack{ \Pi'\textrm{ cuspidal}}} H^q(\g_\infty',K'^\circ_\infty,\Pi'_\infty\otimes E_\mu)\otimes\Pi'_f.\label{eq:cuspcohdec}
\end{eqnarray}
Both spaces are $G'(\A_f)$-modules and their decompositions are inherited by \eqref{eq:disdecmp}, resp., \eqref{eq:cuspdecmp}.

By Thm.\ \ref{thm:franke} it is justified to talk about the image of all cohomology spaces constructed in Section \ref{sect:autcoh1} and \ref{sect:autcoh2} in $H^q(S_{G'},\mathcal E_\mu)$. Let us denote these various images by overlining ``$H$''. In particular, this applies to $H^q_{cusp}(G',E_\mu)$ and $H^q_{dis}(G',E_\mu)$ (resp., $H^q_{cusp,\omega}(G',E_\mu)$ and $H^q_{dis,\omega}(G',E_\mu)$), so, e.g., Im$H^q_{cusp}(G',E_\mu)=:\bar H^q_{cusp}(G',E_\mu)$. Then Cor.\ \ref{cor:cohdecP'phi} implies the following proposition, which describes the interplay between interior cohomology and the various cohomology spaces constructed in this section.

\begin{prop}\label{prop:inclusions}
There is the following commutative diagram of $G'(\A_f)$-modules.
\begin{displaymath}
\xymatrix{
H^q_{cusp}(G',E_\mu) \ar[r]^\cong & \bar H^q_{cusp}(G',E_\mu) \ar@{^{(}->}[r]  &  H_!^q(S_{G'},\mathcal E_\mu) \ar@{^{(}->}[r] & \bar H^q_{dis}(G',E_\mu)\\
H^q_{cusp,\omega}(G',E_\mu) \ar[r]^\cong & \bar H^q_{cusp,\omega}(G',E_\mu) \ar@{^{(}->}[u]  \ar@{^{(}->}[rr]&   & \bar H^q_{dis,\omega}(G',E_\mu)  \ar@{^{(}->}[u]                    }
\end{displaymath}
\end{prop}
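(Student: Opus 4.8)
Franke's theorem (Thm.\ \ref{thm:franke}) identifies $H^q(S_{G'},\mathcal E_\mu)$ with $H^q(\g_\infty',K'^\circ_\infty,\mathcal A_{\mathcal J}(G')\otimes E_\mu)$ as $G'(\A_f)$-modules, and each overlined group in the diagram is, by definition, the image in $H^q(S_{G'},\mathcal E_\mu)$ of the $(\g_\infty',K'^\circ_\infty)$-cohomology of the corresponding $G'(\A)$-submodule of $\mathcal A_{\mathcal J}(G')$. Hence, once the relevant containments of subspaces of $H^q(S_{G'},\mathcal E_\mu)$ are established, every arrow labelled as an inclusion is literally the inclusion of a subspace, and the whole diagram commutes for formal reasons. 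The genuine points are: (a) $H^q_{cusp}(G',E_\mu)\to\bar H^q_{cusp}(G',E_\mu)$ is injective; (b) $\bar H^q_{cusp}(G',E_\mu)\subseteq H^q_!(S_{G'},\mathcal E_\mu)$; (c) $H^q_!(S_{G'},\mathcal E_\mu)\subseteq\bar H^q_{dis}(G',E_\mu)$; and (d) the corresponding assertions after fixing a central character, together with the splittings of the vertical maps. For (a) I would invoke \eqref{eq:autdecP'}: $\mathcal A_{cusp,\mathcal J}(G')=\mathcal A_{\mathcal J,\{G'\}}(G')$ is a direct summand of $\mathcal A_{\mathcal J}(G')$ as a $G'(\A)$-module, and $(\g_\infty',K'^\circ_\infty)$-cohomology commutes with direct sums, so the induced map on cohomology is split injective; its image being $\bar H^q_{cusp}(G',E_\mu)$ by definition, this already gives the first isomorphism.

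\textbf{Step (b): cuspidal classes are interior.} By \eqref{eq:intcoh} one has $H^q_!(S_{G'},\mathcal E_\mu)=\ker(res^q)$, where $res^q$ restricts to the Borel--Serre boundary $\partial(\overline S_{G'})$, so it suffices to show that a cuspidal cohomology class restricts to zero there. This is the classical mechanism behind interior cohomology: the boundary cohomology $H^q(\partial(\overline S_{G'}),\mathcal E_\mu)$ is assembled, stratum by stratum indexed by the $G'(F)$-classes of proper parabolic $F$-subgroups, out of the constant terms of automorphic forms along those parabolics, all of which vanish identically on cusp forms (equivalently, cusp forms are rapidly decreasing on $\R_+G'(F)\backslash G'(\A)$, hence cohomologically trivial near the boundary); I would cite Borel--Serre \cite{boserre} and Borel \cite{bor1} for the structure of the boundary and \cite{bowa} for the rapid-decrease statement. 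Together with (a) this produces $H^q_{cusp}(G',E_\mu)\xrightarrow{\cong}\bar H^q_{cusp}(G',E_\mu)\hookrightarrow H^q_!(S_{G'},\mathcal E_\mu)$.

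\textbf{Step (c): interior cohomology is square-integrable.} One first records that square-integrable automorphic forms are of moderate growth, so that $\mathcal A_{dis,\mathcal J}(G')\subseteq\mathcal A_{\mathcal J}(G')$ and $H^q_{dis}(G',E_\mu)$ really does map to $H^q(S_{G'},\mathcal E_\mu)$ with image $\bar H^q_{dis}(G',E_\mu)$. The substance is the containment $\ker(res^q)\subseteq\bar H^q_{dis}(G',E_\mu)$, i.e.\ that every interior cohomology class can be represented by a square-integrable automorphic form. I would derive this from the square-integrability theorem for interior cohomology --- interior cohomology injects into $L^2$-cohomology, and the continuous part of the spectrum contributes nothing to $L^2$-cohomology, so interior classes are carried by $\mathcal A_{dis,\mathcal J}(G')$ --- in the form in which it enters Franke's framework; cf.\ \cite{franke} and \cite{bowa}. \textbf{This is the step I expect to be the main obstacle}, being the only input that is not a purely formal consequence of the direct-sum decompositions and the rapid decrease of cusp forms. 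Since the inclusion $H^q_{cusp}(G',E_\mu)\hookrightarrow H^q_{dis}(G',E_\mu)$ is already known (cusp forms being square-integrable), the composite $\bar H^q_{cusp}\hookrightarrow H^q_!\hookrightarrow\bar H^q_{dis}$ is the expected one.

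\textbf{Step (d): fixed central character and the vertical maps.} Fix a unitary character $\omega$ of $\R_+Z'(F)\backslash Z'(\A)$ with $\omega|_{\R_+}=1$. Since $\R_+Z'(F)\backslash Z'(\A)$ is compact, the eigenspace decomposition of $\mathcal A_{\mathcal J}(G')$ under the central character exhibits $\mathcal A_{\mathcal J}(G',\omega)$, $\mathcal A_{cusp,\mathcal J}(G',\omega)$ and $\mathcal A_{dis,\mathcal J}(G',\omega)$ as direct summands of $\mathcal A_{\mathcal J}(G')$, $\mathcal A_{cusp,\mathcal J}(G')$ and $\mathcal A_{dis,\mathcal J}(G')$ respectively. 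Passing to $(\g_\infty',K'^\circ_\infty)$-cohomology and then to images in $H^q(S_{G'},\mathcal E_\mu)$ gives the split injections $\bar H^q_{cusp,\omega}(G',E_\mu)\hookrightarrow\bar H^q_{cusp}(G',E_\mu)$ and $\bar H^q_{dis,\omega}(G',E_\mu)\hookrightarrow\bar H^q_{dis}(G',E_\mu)$; the isomorphism $H^q_{cusp,\omega}(G',E_\mu)\xrightarrow{\cong}\bar H^q_{cusp,\omega}(G',E_\mu)$ and the inclusion $\bar H^q_{cusp,\omega}\hookrightarrow\bar H^q_{dis,\omega}$ are obtained exactly as in steps (a)--(c) with $\omega$-isotypic coefficients. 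Finally, all four arrows of the square are induced, after restriction, by the single $G'(\A_f)$-equivariant inclusion $\mathcal A_{cusp,\mathcal J}(G',\omega)\hookrightarrow\mathcal A_{dis,\mathcal J}(G')$, which factors both through $\mathcal A_{cusp,\mathcal J}(G')$ and through $\mathcal A_{dis,\mathcal J}(G',\omega)$, so commutativity is automatic. This completes the plan.
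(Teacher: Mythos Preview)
Your proposal is correct and in fact considerably more detailed than what the paper offers: the paper does not give a separate proof of this proposition at all, but simply asserts in the sentence preceding it that ``Cor.~\ref{cor:cohdecP'phi} implies the following proposition.'' That corollary is precisely the Franke--Schwermer decomposition of $H^q(S_{G'},\mathcal E_\mu)$ along parabolic and cuspidal supports, and it directly handles your step~(a) (since $H^q_{cusp}(G',E_\mu)=H^q_{\{G'\}}(G',E_\mu)$ is a summand); the paper evidently regards the remaining steps as standard.

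Your treatment of~(b) via rapid decrease of cusp forms and of~(c) via Borel's square-integrability theorem (interior cohomology injects into $L^2$-cohomology, which is computed by the discrete spectrum) is the canonical way to fill in what the paper leaves implicit; these are indeed the inputs one needs beyond the decomposition itself, and you are right to flag~(c) as the only step with real content. The central-character argument in~(d) is fine. So your route and the paper's are compatible rather than genuinely different: you spell out the analytic ingredients (Borel--Serre boundary, Borel's regularization/$L^2$-theorem) that the paper's one-line appeal to Cor.~\ref{cor:cohdecP'phi} tacitly assumes.
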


\subsection{Three results in the split case}
Having made some preparatory work in Sections \ref{sect:deftwist}--\ref{sect:autcoh2}, let us now recall one of the main results in Section 3 of Clozel \cite{clozel}:

\begin{thm}[\cite{clozel}, Thm.\ 3.13]\label{thm:3.13}
Let $\Pi\in\mathscr D(G)$ be regular algebraic and cuspidal. Then for all $\sigma\in \textrm{\emph{Aut}}(\C)$, there is a uniquely determined representation $\Xi\in\mathscr D(G)$, which is regular algebraic and cuspidal and such that $\Xi_f\cong{}^\sigma\Pi_f$.
\end{thm}
\begin{rem}
The representation $\Xi$ is uniquely determined by Strong Multiplicity One. We may hence also write ${}^\sigma\Pi:=\Xi$, abusing notation. Although it is regular algebraic, whence its archimedean component is cohomological and essentially tempered (cf.\ Thm.\ \ref{thm:regalg=cusp}), ${}^\sigma\Pi_\infty$  may not be directly determined by applying a simple ``permutation action'' to its factors: Indeed, if $n$ is odd, then there are two possible choices of a cohomological representation at a real place. See Sect.\ \ref{sect:tempcoh}. However, in any other case, i.e., if either $n$ is even, or if $F$ has no real place, ${}^\sigma\Pi_\infty$ equals the unique irreducible, admissible, essentially tempered representation which is cohomological with respect to ${}^\sigma\!E_\mu$.
\end{rem}
We may reformulate Thm.\ \ref{thm:3.13} within the setup developed in Sections \ref{sect:geomcoh}-\ref{sect:autcoh2} applied to the special case $D=F$, i.e., $G'=G$. As $\Pi$ is regular algebraic and cuspidal, $\Pi_\infty$ is cohomological with respect to some highest weight representation $E_\mu$ of $G_\infty$, cf. Thm.\ \ref{thm:regalg=cusp}. Therefore, by \eqref{eq:cuspcohdec}, $\Pi_f$ appears as an irreducible subspace of $H^q_{cusp,\omega}(G,E_\mu)$ for some degree $q$ and $\omega$ being the central character of $\Pi$ (where we can assume without loss of generality that $\omega|_{\R_+}=1$). Hence, by Prop.\ \ref{prop:inclusions}, $\Pi_f$ is even an irreducible submodule of $H^q_{cusp}(G,E_\mu)$. In view of the decomposition of the cohomology of $S_{G}$, cf. Cor.\ \ref{cor:cohdecP'phi}, and the essentially temperedness of the archimedean component of a cohomological, cuspidal representation $\Pi$, which by Strong Multiplicity One pins down $\Xi_\infty$ uniquely, one may hence reformulate Thm.\ \ref{thm:3.13} as follows:

{\it For any highest weight representation $E_\mu$ and any automorphism $\sigma\in \textrm{\emph{Aut}}(\C)$, the summand $H^q_{\{G\}}(G,E_\mu)$ of $H^q(S_{G},\mathcal E_\mu)$, being cuspidal cohomology, is mapped by $\sigma^*$ (cf. \ref{eq:sigmaiso}) isomorphically onto the summand
$H^q_{\{G\}}(G,{}^\sigma\!E_\mu)$ of $H^q(S_{G},{}^\sigma\!\mathcal E_\mu)$}

It is now clear why the following theorem of Franke is a generalization of Thm.\ \ref{thm:3.13}:

\begin{thm}[\cite{franke}, Thm.\ 20]\label{thm:3.13franke}
Let $E_\mu$ be a highest weight representation of $G_\infty$ and $\sigma\in \textrm{\emph{Aut}}(\C)$ an automorphism. For each associate class of parabolic $F$-subgroups $\{P\}$ the summand $H^q_{\{P\}}(G,E_\mu)$ of $H^q(S_{G},\mathcal E_\mu)$, is mapped by $\sigma^*$ isomorphically onto the corresponding summand $H^q_{\{P\}}(G,{}^\sigma\!E_\mu)$ of $H^q(S_{G},{}^\sigma\!\mathcal E_\mu)$. In other words, $\sigma^*$ respects the decomposition of sheaf cohomology along associate classes of parabolic $F$-subgroups.
\end{thm}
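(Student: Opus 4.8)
The plan is to reduce the assertion to Clozel's Theorem~\ref{thm:3.13} on the $\sigma$-equivariance of cuspidal cohomology, applied to the Levi factors of the members of $\{P\}$, by an induction on the semisimple rank of $G$ carried out on the Borel--Serre boundary. The point is that although the $\{P\}$-decomposition of $\mathcal A_{\mathcal J}(G)$ is defined analytically --- via the condition that an automorphic form be negligible along every parabolic $Q\notin\{P\}$, i.e. that its constant term $f_Q$ be orthogonal to the cuspidal spectrum of $L_Q(\A)$ --- it admits a purely cohomological reformulation that can be read off the restriction maps to the faces of $\partial\overline S_G$, and those restriction maps are visibly $\sigma$-equivariant because they live in Betti cohomology.

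First I would recall the structure of the Borel--Serre boundary: $\partial\overline S_G=\bigcup_{\{Q\}}e'(Q)$, the union running over $G(F)$-conjugacy classes of parabolic $F$-subgroups, each face $e'(Q)$ fibering over $S_{L_Q}$ with nilmanifold fibers whose fiberwise cohomology is $H^*(\mathfrak n_Q,E_\mu)$. By Kostant's theorem the latter decomposes as a sum of irreducible highest-weight $L_Q$-modules $E_{w\cdot\mu}$, $w$ running over minimal-length coset representatives, with a degree shift by $\ell(w)$; hence the Leray spectral sequence computes $H^*(e'(Q),\mathcal E_\mu)$ in terms of the groups $H^{*-\ell(w)}(S_{L_Q},\mathcal E_{w\cdot\mu})$, and each such group carries its own $\{R\}$- and cuspidal-support decomposition (this is where the induction on rank enters, $L_Q$ being a proper Levi, with the base case $\{R\}=\{L_Q\}$ being Clozel's theorem for the $GL$-factors of $L_Q$ together with the K\"unneth rule). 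The key step is then to prove, following Franke's analysis of the long exact sequence of the pair $(\overline S_G,\partial\overline S_G)$ and his description of $\mathcal A_{\mathcal J}(G)$ in terms of Eisenstein series attached to cuspidal data on Levi subgroups, that $H^q_{\{P\}}(G,E_\mu)$ is exactly the subspace of classes in $H^q(S_G,\mathcal E_\mu)$ whose restriction to each face $e'(Q)$ lands, on every Kostant summand $H^{*-\ell(w)}(S_{L_Q},\mathcal E_{w\cdot\mu})$, inside the sum of those $\{R\}$-isotypic pieces whose induced associate class in $G$ equals $\{P\}$.

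Granting this characterisation, $\sigma$-equivariance is formal. The stratification of $\partial\overline S_G$ and the restriction morphism $res^q$ are topological, hence intertwine the $\sigma$-linear isomorphisms $\sigma^*$ of \eqref{eq:sigmaiso} by functoriality of Betti cohomology; Kostant's decomposition is algebraic, and ${}^\sigma(H^*(\mathfrak n_Q,E_\mu))=H^*(\mathfrak n_Q,{}^\sigma\!E_\mu)$ summand by summand with $E_{w\cdot\mu}$ going to $E_{w\cdot({}^\sigma\mu)}$; and the projection onto the $\{R\}$-components of $H^*(S_{L_Q},\cdot)$ is $\sigma$-equivariant by the inductive hypothesis and Clozel's Theorem~\ref{thm:3.13}. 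Composing these facts, a class is killed by the operations ``restrict to $e'(Q)$ and project away from $\{P\}$'' for all $Q\notin\{P\}$ if and only if its $\sigma^*$-image is killed by the analogous operations for ${}^\sigma\!\mathcal E_\mu$; therefore $\sigma^*\big(H^q_{\{P\}}(G,E_\mu)\big)=H^q_{\{P\}}(G,{}^\sigma\!E_\mu)$. Since the $\{P\}$ exhaust all associate classes and $\sigma^*$ is an isomorphism of the full cohomology, this shows that $\sigma^*$ respects the entire $\{P\}$-decomposition.

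The main obstacle I expect is the middle step: producing the clean cohomological description of $H^q_{\{P\}}(G,E_\mu)$ as an intersection of kernels of ``restrict-to-boundary-then-project-to-non-$\{P\}$'' maps. This requires a careful bookkeeping of \emph{associate} classes (as opposed to mere $G(F)$-conjugacy classes) of parabolic subgroups consistently on the automorphic side (where $\{P\}$ is built from constant terms) and on the geometric side (where the faces $e'(Q)$ are indexed by conjugacy classes), and it leans on Franke's control of the Eisenstein-series description of $\mathcal A_{\mathcal J}(G)$ and on the precise degeneration behaviour of the boundary spectral sequence. Once this is in hand, the compatibility with $\sigma$ needs nothing beyond the algebraicity of Kostant's theorem and Clozel's result.
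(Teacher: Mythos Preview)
The paper does not prove this statement; it is quoted as Franke's Theorem~20 and used as input. So there is no ``paper's own proof'' to compare against, only Franke's original argument.

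Your sketch has a genuine gap, and it is precisely at the point you flag as the ``main obstacle'': the proposed boundary-restriction characterisation of $H^q_{\{P\}}(G,E_\mu)$ is not correct. Take $\{P\}=\{G\}$. Your criterion says a class lies in $H^q_{\{G\}}$ iff its restriction to every face $e'(Q)$ lands in the subspace of $H^*(S_{L_Q},\cdot)$ supported on parabolics whose associate class in $G$ is $\{G\}$; but no proper parabolic is associate to $G$, so the criterion becomes ``zero boundary restriction'', i.e.\ it cuts out $H^q_!(S_G,\mathcal E_\mu)$. That is strictly larger than $H^q_{\{G\}}=H^q_{cusp}$ in general: square-integrable residual representations (which by definition sit in $\mathcal A_{\mathcal J,\{P\}}$ for a \emph{proper} $\{P\}$) can be cohomological and contribute to interior cohomology --- the paper's own Example~\ref{ex:(ii)not(i)} furnishes exactly such a residual cohomological class on $GL_4$ supported on the $(2,2)$-parabolic. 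Such a class has zero boundary restriction yet does not lie in $H^q_{\{G\}}$, so your intersection-of-kernels description misallocates it. The same phenomenon obstructs the characterisation for proper $\{P\}$: the residues of Eisenstein series need not be detected by the constant-term/boundary maps you are using.

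Franke's actual route is different: he first establishes (his Theorem~19) an isomorphism between $H^q_{\{P\}}(G,E_\mu)$ and a parabolically induced module built from \emph{cuspidal} cohomology of the Levi, and then deduces rationality from the rationality of cuspidal cohomology on the Levi factors (Clozel) together with the evident rationality of parabolic induction. This bypasses the boundary-restriction idea entirely; the Eisenstein description already separates the $\{P\}$-pieces before one ever looks at $\partial\overline S_G$. If you want to salvage your approach you would need a finer invariant than $res^q$ --- something that sees the cuspidal support of the constant term rather than just its vanishing --- and producing such an invariant in a $\sigma$-equivariant way is essentially equivalent to invoking Franke's Theorem~19.
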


This latter theorem was refined further by Franke--Schwermer in \cite{schwfr}:

\begin{thm}[\cite{schwfr}, Thm.\ 4.3]\label{thm:3.13frankeschw}
Let $E_\mu$ be a highest weight representation of $G_\infty$ and $\sigma\in \textrm{\emph{Aut}}(\C)$ an automorphism. For each associate class of parabolic $F$-subgroups $\{P\}$, and each associate class of cuspidal automorphic representations $\varphi_{P}$, the summand $H^q_{\{P\},\varphi_{P}}(G,E_\mu)$ of $H^q(S_{G},\mathcal E_\mu)$, is mapped by $\sigma^*$ isomorphically onto the summand $H^q_{\{P\},{}^\sigma\!\varphi_{P}}(G,{}^\sigma\!E_\mu)$ of $H^q(S_{G},{}^\sigma\!\mathcal E_\mu)$ for a unique associate class ${}^\sigma\!\varphi_{P}$.
\end{thm}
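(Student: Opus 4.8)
The plan is to bootstrap from the coarser result of Franke (Theorem~\ref{thm:3.13franke}), which already tells us that $\sigma^*$ respects the decomposition along associate classes $\{P\}$ of parabolic $F$-subgroups, and to refine each $\{P\}$-summand along its cuspidal supports by feeding Clozel's Theorem~\ref{thm:3.13} into the Levi. First I would note that $\sigma$ fixes every associate class $\{P\}$, since for $G=GL_n$ such a class is merely an unordered partition of $n$; hence by Theorem~\ref{thm:3.13franke} it suffices to work inside a single summand $H^q_{\{P\}}(G,E_\mu)$ and show that $\sigma^*$ permutes the finer pieces $H^q_{\{P\},\varphi_P}(G,E_\mu)$, identifying the image.

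Next I would make the cohomology $H^q_{\{P\},\varphi_P}(G,E_\mu)$ explicit. Fix a representative $P=LN$ with $L\cong\prod_{i=1}^r GL_{m_i}$ and a cuspidal datum $\Pi_L=\tilde\Pi_L\otimes e^{\langle d\chi,H_P\rangle}$. Using Franke's main theorem (the filtration of $\mathcal A_{\mathcal J,\{P\},\varphi_P}(G)$ described in \cite{franke}, recalled before \eqref{eq:autdecP'phi}), this cohomology is a finite subquotient assembled from the $(\g_\infty,K^\circ_\infty)$-cohomology of induced modules $\mathrm{Ind}_{P(\A)}^{G(\A)}\big(\tilde\Pi_L\otimes e^{\langle\nu,H_P\rangle}\big)$ at a finite set of evaluation points $\nu$ attached to $d\chi$. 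Unfolding by Frobenius reciprocity in $(\g,K)$-cohomology together with Kostant's theorem for $H^*(\n;E_\mu)$ reduces the archimedean input to the $(\l_\infty,K^L_\infty)$-cohomology of $\tilde\Pi_{L,\infty}$ twisted by the finite-dimensional coefficient modules $E_{w\cdot\mu}$ of $L_\infty$ ($w$ a Kostant representative), tensored with a parabolically induced $G(\A_f)$-module built from $\tilde\Pi_{L,f}$. In particular, non-vanishing forces every $GL_{m_i}$-constituent of $\tilde\Pi_L$ to be cuspidal and cohomological in Clozel's sense.

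Then I would exploit the functoriality of the $\sigma$-linear map $\sigma^*$ of \eqref{eq:sigmaiso}: it commutes with parabolic induction on the finite adèlic part, with the formation of constant terms, Eisenstein series and their residues, and with Kostant's decomposition of $H^*(\n;E_\mu)$. Granting that the continuous parameter $d\chi$ and the evaluation points $\nu$ transform correctly under $\sigma$ --- which one checks using that they are pinned down, up to the finite Weyl ambiguity, by compatibility with $\mathcal J=\mathcal J_{E_\mu}$ and that $\sigma$ carries $\mathcal J_{E_\mu}$ to $\mathcal J_{{}^\sigma\! E_\mu}$ --- Clozel's Theorem~\ref{thm:3.13} applied to each $GL_{m_i}$ produces a cuspidal cohomological $\Xi_i$ with finite part ${}^\sigma\Pi_{i,f}$ and ${}^\sigma\Pi_{i,\infty}\cong\Xi_{i,\infty}$. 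The Künneth rule for $(\l_\infty,K^L_\infty)$-cohomology packages these into a cuspidal cohomological representation $\Xi_L$ of $L(\A)$ with finite part ${}^\sigma\tilde\Pi_{L,f}$, and together with the $\sigma$-twisted character this yields a cuspidal datum determining an associate class ${}^\sigma\!\varphi_P$; functoriality of $\sigma^*$ then matches $\sigma^*\big(H^q_{\{P\},\varphi_P}(G,E_\mu)\big)$ with the summand $H^q_{\{P\},{}^\sigma\!\varphi_P}(G,{}^\sigma\! E_\mu)$.

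For uniqueness of ${}^\sigma\!\varphi_P$ I would argue that the cuspidal support of anything occurring in $\mathcal A_{\mathcal J,\{P\}}(G)$ is intrinsic --- by the rigidity (strong multiplicity one) theorem of Jacquet--Shalika for isobaric sums on $GL_n$ --- so the class is determined by the $G(\A_f)$-isotypic data that $\sigma^*$ transports, whence the image can meet at most one summand $H^q_{\{P\},\varphi'}(G,{}^\sigma\! E_\mu)$. The main obstacle is the middle step: making Franke's filtration-theoretic computation of $H^q_{\{P\},\varphi_P}(G,E_\mu)$ fully explicit and verifying its $\sigma$-naturality at every stage --- Eisenstein series, residues, evaluation points, Kostant's theorem --- and in particular the correct transformation of the continuous parameters under $\sigma$. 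Once that is secured, the archimedean half of the statement is just Clozel's theorem run through the Künneth formula for the Levi.
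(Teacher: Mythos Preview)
The paper does not give its own proof of this statement: it is quoted verbatim from Franke--Schwermer \cite[Thm.\ 4.3]{schwfr} as an established result for the split group $G=GL_n$, and then generalized to $G'$ in Theorem~\ref{thm:3.13'frankeschw}. So there is no proof in the paper to compare against directly.

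That said, your sketch is essentially the strategy used both in \cite{schwfr} and in the paper's own proof of the generalization (Theorem~\ref{thm:3.13'frankeschw}). There one realizes $H^q_{\{P'\},\varphi_{P'}}(G',E_\mu)$, via Franke's Theorem~19, as the cohomology of a parabolically induced module $\mathrm{Ind}_{P'(\A)}^{G'(\A)}[\tilde\Pi'\otimes S(\a^{G'}_{P',\C})]$; unfolds this using \cite[III, Thm.\ 3.3]{bowa} (which packages the Frobenius reciprocity and Kostant steps you describe) to reduce to $(\l'_\infty,K'_{L',\infty})$-cohomology of $\Pi'_\infty\otimes\rho_{P'_\infty}$ against a coefficient system $E_{\mu_w}$; and then applies the Levi-level analogue of Clozel's theorem (Prop.~\ref{prop:cusptwist} in the paper, Theorem~\ref{thm:3.13} in your split sketch) to the cuspidal datum on $L'$. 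The $\sigma$-naturality is handled not by tracking Franke's filtration abstractly but by realizing the induced cohomology inside the geometric boundary cohomology $H^q(\partial_{P'}S_{G'},\mathcal E_\mu)$, where $\sigma^*$ is defined on the nose, and then invoking Strong Multiplicity One for the Levi to pin down the image summand. This sidesteps the ``main obstacle'' you flag about transforming evaluation points and residues under $\sigma$. One further point: the paper's generalization carries an extra regularity hypothesis on $E_\mu$, needed so that the twisted representation on the Levi is forced to be cuspidal (via Wallach's theorem); in the split case Franke--Schwermer manage without this by a finer argument.
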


We devote the next two subsections to showing certain generalizations of the above theorems to $G'$.

\subsection{A generalization of Theorems\ \ref{thm:3.13}, \ref{thm:3.13franke} and \ref{thm:3.13frankeschw}}\label{sect:cusptwist}

\begin{prop}\label{prop:cusptwist}
Let $\Pi'\in\mathscr D(G')$ be cuspidal and cohomological with respect to a highest weight representation $E_\mu$. Then for all $\sigma\in \textrm{\emph{Aut}}(\C)$, ${}^\sigma\Pi'_f$ is the finite part of a uniquely determined discrete series representation $\Xi'\in\mathscr D(G')$, which is cohomological with respect to ${}^\sigma\! E_\mu$. If $E_\mu$ is regular, then $\Xi'$ is cuspidal.
\end{prop}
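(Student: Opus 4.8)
The plan is to transport $\Pi'_f$, realised as a submodule of the interior cohomology $H^q_!(S_{G'},\mathcal E_\mu)$, across the $\sigma$-linear isomorphism $\sigma^*_!$ of \eqref{eq:sigmaiso!}, and then to recognise its image inside the automorphic decomposition of $H^q_!(S_{G'},{}^\sigma\!\mathcal E_\mu)$. After the harmless normalisation $E_\mu|_{\R_+}=\triv$ (which one arranges by twisting $\Pi'$ by a power of $|\det'|$), the spaces of Sections \ref{sect:autcoh1}--\ref{sect:autcoh2} are available. Since $\Pi'$ is cuspidal and cohomological with respect to $E_\mu$, there is a degree $q$ with $H^q(\g'_\infty,K'^\circ_\infty,\Pi'_\infty\otimes E_\mu)\neq 0$, and by the decomposition \eqref{eq:cuspcohdec} the finite part $\Pi'_f$ occurs as an irreducible $G'(\A_f)$-submodule of $H^q_{cusp,\omega}(G',E_\mu)$, with $\omega$ the central character of $\Pi'$. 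By Proposition \ref{prop:inclusions}, together with Theorem \ref{thm:franke} and the identity \eqref{eq:intcoh}, this exhibits $\Pi'_f$ as an irreducible submodule of $H^q_!(S_{G'},\mathcal E_\mu)$.

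Next I would apply the $\sigma$-twist. Here ${}^\sigma\!\mathcal E_\mu$ is the sheaf attached to ${}^\sigma\!E_\mu=E_{{}^\sigma\!\mu}$, where ${}^\sigma\!\mu$ arises from $\mu$ by the permutation action of $\sigma$ on the archimedean places; this is again a highest weight as in Section \ref{sect:highweights}, and it is regular exactly when $\mu$ is. The $\sigma$-linear, $G'(\A_f)$-equivariant isomorphism $\sigma^*_!$, which is the restriction to interior cohomology of $\sigma^*$ of \eqref{eq:sigmaiso} (compatibly, by naturality of the boundary restriction $res^q$ and the $\Q(\mu)$-rationality of $H^q(\partial(\overline S_{G'}),\mathcal E_\mu)$), carries the submodule $\Pi'_f$ isomorphically onto an irreducible $G'(\A_f)$-submodule of $H^q_!(S_{G'},{}^\sigma\!\mathcal E_\mu)$ which, by the very definition of the $\sigma$-twist, is isomorphic to ${}^\sigma\Pi'_f$. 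Invoking Proposition \ref{prop:inclusions} once more, ${}^\sigma\Pi'_f$ embeds into $\bar H^q_{dis}(G',{}^\sigma\!E_\mu)$. By \eqref{eq:discohdec} the space $H^q_{dis}(G',{}^\sigma\!E_\mu)$ is a direct sum, over the square-integrable automorphic representations $\Xi'$ annihilated by a power of $\mathcal{J}_{{}^\sigma\!E_\mu}$, of finitely many copies each of $\Xi'_f$; hence so is its quotient $\bar H^q_{dis}(G',{}^\sigma\!E_\mu)$, which is thus a semisimple $G'(\A_f)$-module. Therefore ${}^\sigma\Pi'_f\cong\Xi'_f$ for some square-integrable automorphic representation $\Xi'$ with $H^q(\g'_\infty,K'^\circ_\infty,\Xi'_\infty\otimes{}^\sigma\!E_\mu)\neq0$. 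Such a $\Xi'$ lies in $\mathscr D(G')$ and is cohomological with respect to ${}^\sigma\!E_\mu$; this proves the first assertion.

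Suppose now that $E_\mu$, hence ${}^\sigma\!E_\mu$, is regular. It remains to show that a square-integrable automorphic representation of $G'(\A)$ which is cohomological with respect to a regular coefficient system is cuspidal. I would obtain this by descending through the global Jacquet--Langlands transfer (Corollary \ref{cor:globalJL}): $JL(\Xi')$ is a square-integrable automorphic representation of $G(\A)=GL_n/F$, hence by M\oe glin--Waldspurger it is either cuspidal or residual. As $|LJ|_v$ is the identity at split places and preserves infinitesimal characters at all archimedean places, the archimedean components of $JL(\Xi')$ carry the same infinitesimal character as those of $\Xi'$, namely that of $E^{\sf v}_{{}^\sigma\!\mu}$, which is regular. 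But the archimedean component of a residual representation of $GL_n/F$ is a Langlands quotient built from repeated Speh segments, whose infinitesimal character lies on a wall of the Weyl chamber; equivalently, the residual spectrum contributes nothing to cohomology in regular weight (cf. Clozel \cite{clozel} and M\oe glin--Waldspurger \cite{moewalgln}). Hence $JL(\Xi')$ must be cuspidal, and then $\Xi'$ is cuspidal by Theorem \ref{thm:globJL}.

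Everything up to the production of $\Xi'$ is soft, using only Franke's theorem, the rational structure on Betti cohomology, and the semisimplicity of $\bar H^q_{dis}$. The main obstacle is the last step: ruling out that $\Xi'$ is residual genuinely requires the description of the residual spectrum of $GL_n$ and the fact that residual representations are not cohomological for regular weights. This is exactly where regularity of $E_\mu$ is indispensable, consistent with the fact that in the non-regular case a residual $\Xi'$ can really occur (compare Example \ref{ex:(ii)not(i)}).
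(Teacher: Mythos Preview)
Your first part---embedding $\Pi'_f$ into interior cohomology, transporting via $\sigma^*_!$, and extracting a discrete series $\Xi'$ from $\bar H^q_{dis}(G',{}^\sigma\!E_\mu)$---matches the paper's proof essentially line for line.

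The divergence is in the cuspidality step, and there your argument contains a genuine error. You claim that the archimedean component of a residual representation of $GL_n$ has infinitesimal character lying on a wall of the Weyl chamber. This is false: Example~\ref{ex:(ii)not(i)} in this very paper exhibits a residual $\Pi$ on $GL_4$ whose infinitesimal character is $(\tfrac32,\tfrac12,-\tfrac12,-\tfrac32)=\rho$, which is regular. More generally, every $A_\q(\lambda)$ with $\lambda$ dominant has infinitesimal character $\lambda+\rho$, which is automatically regular; so ``regular infinitesimal character'' cannot distinguish cuspidal from residual. Your parenthetical ``equivalently, the residual spectrum contributes nothing to cohomology in regular weight'' is a true statement, but it is not equivalent to the infinitesimal-character claim, and you have not proved it along this route. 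To prove it you would need the Vogan--Zuckerman admissibility condition \eqref{eq:extending}: if the highest weight is regular, then $\lambda|_{[\l,\l]}=0$ forces $\l$ to be the Cartan, so the only cohomological representation is the (essentially) tempered one.

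But once you invoke Vogan--Zuckerman in that way, the detour through $JL$ and M\oe glin--Waldspurger is entirely unnecessary. The paper's argument applies the same Vogan--Zuckerman observation directly to $G'$: regularity of ${}^\sigma\!E_\mu$ forces each $\Xi'_v$ to be essentially tempered (see \cite{vozu} (5.1) and the last paragraph of p.~58, or \cite{lischw}, Prop.~4.2), and then Wallach \cite{wallach}, Thm.~4.3, gives cuspidality of $\Xi'$ immediately, without ever passing to $GL_n$. Your route, even after repair, would prove temperedness of $JL(\Xi')_\infty$, deduce cuspidality of $JL(\Xi')$ by Wallach, and then descend via Theorem~\ref{thm:globJL}---a strictly longer path using more machinery to reach the same conclusion.
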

\begin{proof}
By assumption $\Pi'_\infty$ is cohomological with respect to the highest weight representation $E_\mu$. As in Section \ref{sect:autcoh1}, by working with $G'(\A)^{(1)}$ instead of $G'(\A)$, we may henceforth assume without loss of generality that the central characters of $\Pi'_\infty$ and $E_\mu$ are both trivial on $\R_+$.

To proceed, let $b$ be a degree in which the $(\g_\infty',K'^\circ_\infty)$-cohomology of $\Pi'_\infty\otimes E_\mu$ does not vanish. Then $H^b(\g_\infty',K'^\circ_\infty,\Pi'_\infty\otimes E_\mu)$ is a non-trivial, but finite-dimensional $\C$-vector space on which $G'(\A_f)$ acts trivially. We may therefore embed
\begin{displaymath}
\xymatrix{
\Pi'_f \ar@{^{(}->}[r]  &  H^b(\g_\infty',K'^\circ_\infty,\Pi'_\infty\otimes E_\mu)\otimes\Pi'_f.}
\end{displaymath}
According to \eqref{eq:cuspcohdec}, this implies that $\Pi'_f$ appears as an irreducible $G'(\A_f)$-submodule in $H^b_{cusp,\omega}(G',E_\mu)$, with $\omega$ the central character of $\Pi'$. So, by Prop.\ \ref{prop:inclusions}, we may view $\Pi'_f$ as an irreducible $G'(\A_f)$-submodule of $H_!^b(S_{G'},\mathcal E_\mu)$. Recall from \eqref{eq:sigmaiso!} that for each $\sigma\in \textrm{Aut}(\C)$ there is a $\sigma$-linear, $G'(\A_f)$-equivariant isomorphism
$$\sigma^*_!: H_!^b(S_{G'},\mathcal E_\mu)\longrightarrow H_!^b(S_{G'},\mathcal {}^\sigma\! \mathcal E_\mu).$$
Hence, ${}^\sigma\Pi'_f$ appears as an irreducible submodule of $H_!^b(S_{G'},\mathcal {}^\sigma\! \mathcal E_\mu)$ and using Prop.\ \ref{prop:inclusions} again, even as an irreducible submodule of $\bar H^b_{dis}(G',{}^\sigma\! E_\mu)$. By \eqref{eq:disdecmp}, there must hence be a discrete series representation $\Xi'\in\mathscr D(G')$, such that $\Xi'_\infty$ is cohomological with respect to ${}^\sigma\! E_\mu$ and $\Xi'_f\cong {}^\sigma\Pi'_f$. By Strong Multiplicity One for discrete series automorphic representations of $G'(\A)$, cf.\ Badulescu--Renard \cite{ioanrenard} Th.\ 18.1.(b), this proves the first claim.

For the second claim, assume $E_\mu$ to be regular. Then ${}^{\sigma}\!E_\mu$ is regular, too. It is well--known that under these conditions each local archimedean component of $\Xi'_\infty$ must be essentially tempered. This follows from the Vogan--Zuckerman's condition \cite{vozu} (5.1), p.\ 73, (which is \eqref{eq:extending} here) together with the last paragraph on p.\ 58, {\it ibidem}; see also Li--Schwermer \cite{lischw}, Prop.\ 4.2, or Franke \cite{franke}, p.\ 258. Now, Wallach \cite{wallach}, Thm.\ 4.3, shows that $\Xi'$ is cuspidal.
\end{proof}

\begin{rem}
As it is obvious from the proof of Prop.\ \ref{prop:cusptwist}, $\Xi'$ will also be cuspidal under the weaker condition that $E_\mu$ only admits essentially tempered irreducible admissible representations of $G'_\infty$, which have non--vanishing $(\g'_\infty,K'^\circ_\infty)$-cohomology twisted by $E_\mu$. For an example of a non--regular coefficient module $E_\mu$, which has this property, see Grobner \cite{grob}, Prop.\ 3.5.
\end{rem}

\begin{thm}\label{thm:3.13'frankeschw}
Let $E_\mu$ be a regular highest weight representation of $G'_\infty$ and let $\sigma\in \textrm{\emph{Aut}}(\C).$ For each associate class of parabolic $F$-subgroups $\{P'\}$, and each associate class of cuspidal automorphic representations $\varphi_{P'}$, the summand $H^q_{\{P'\},\varphi_{P'}}(G',E_\mu)$ of $H^q(S_{G'},\mathcal E_\mu)$, is mapped by $\sigma^*$ isomorphically onto the summand $H^q_{\{P'\},{}^\sigma\!\varphi_{P'}}(G',{}^\sigma\!E_\mu)$ of $H^q(S_{G'},{}^\sigma\!\mathcal E_\mu)$ for a unique associate class ${}^\sigma\!\varphi_{P'}$:
\begin{displaymath}
\xymatrix{
H^q_{\{P'\},\varphi_{P'}}(G',E_\mu)\ar[r]^\cong_{\sigma*} & H^q_{\{P'\},{}^\sigma\!\varphi_{P'}}(G',{}^\sigma\!E_\mu).                   }
\end{displaymath}
Assuming $H^q_{\{P'\},\varphi_{P'}}(G',E_\mu)\neq 0$ and letting $\{P'\}$ be represented by the parabolic $F$-subgroup $P'$ with Levi factor $L'$ and $\varphi_{P'}$ be represented by the cuspidal automorphic representation $\Pi'$ of $L'(\A)$, then $\Pi'\otimes\rho_{P'}$ is cohomological. The $\sigma$-twist of its finite part $\Pi'_f\otimes\rho_{P'_f}$ is the finite part of a representation $\Xi'$ of $L'(\A)$, which is indeed cuspidal and the associate class ${}^\sigma\!\varphi_{P'}$ is uniquely determined by being represented by the representation $\Xi'\otimes\rho^{-1}_{P'}$.
\end{thm}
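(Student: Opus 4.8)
\subsection*{Proof proposal}

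The plan is to run the argument of Franke--Schwermer \cite[Sect.\ 4]{schwfr} for the inner form $G'$, with Proposition \ref{prop:cusptwist} on Levi subgroups as the new input that makes the labelling class ${}^\sigma\!\varphi_{P'}$ explicit. First I would observe that none of the steps in \cite{schwfr} that show $\sigma^*$ respects the decomposition \eqref{eq:cohdecP'phi} uses that the ambient group is $GL_n$: they rely only on Franke's structure theorem (our \eqref{eq:autdecP'}, \eqref{eq:autdecP'phi}), on the description of $\mathcal A_{\mathcal J,\{P'\},\varphi_{P'}}(G')$ as the span of Eisenstein series and residues attached to cuspidal data on a Levi $L'$, and on the $\sigma$-equivariance of constant-term maps and of the intertwining operators governing the functional equations. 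Combined with Franke's Theorem 20 \cite{franke}, whose proof is valid for any connected reductive group over $F$ and in particular for $G'$ (this gives the coarser statement that $\sigma^*$ respects the $\{P'\}$-decomposition), this produces for each pair $(\{P'\},\varphi_{P'})$ a unique associate class ${}^\sigma\!\varphi_{P'}$ together with a $\sigma$-linear $G'(\A_f)$-isomorphism $H^q_{\{P'\},\varphi_{P'}}(G',E_\mu)\ira H^q_{\{P'\},{}^\sigma\!\varphi_{P'}}(G',{}^\sigma\! E_\mu)$; the class ${}^\sigma\!\varphi_{P'}$ is independent of the chosen representatives by the functional equations, and its uniqueness is forced by the directness of \eqref{eq:autdecP'phi}.

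The substantive point is to identify ${}^\sigma\!\varphi_{P'}$ when $H^q_{\{P'\},\varphi_{P'}}(G',E_\mu)\neq 0$. Fix $P'=L'N'$ and a cuspidal representation $\Pi'$ of $L'(\A)$ representing $\varphi_{P'}$. Computing the $(\g'_\infty,K'^\circ_\infty)$-cohomology of the induced modules underlying $\mathcal A_{\mathcal J,\{P'\},\varphi_{P'}}(G')$ via Frobenius reciprocity together with Kostant's theorem for $H^\bullet(\n',E_\mu)$ (Borel--Wallach \cite{bowa} III; see also \cite{lischw} and \cite{franke}), non-vanishing forces $(\Pi'\otimes\rho_{P'})_\infty$ to have non-zero $(\l'_\infty,L'_\infty\cap K'^\circ_\infty)$-cohomology with respect to one of the highest weight modules $E_{w\cdot\mu}$ of $L'_\infty$ occurring in the Kostant decomposition of $H^\bullet(\n',E_\mu)$; that is, $\Pi'\otimes\rho_{P'}$ is cohomological. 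Because $E_\mu$ is regular, a standard computation with the Kostant representatives shows that each such $E_{w\cdot\mu}$ is a \emph{regular} highest weight module of $L'_\infty$. Since $L'\cong\prod_i GL_{m_i}/D$ is a product of inner forms of general linear groups, Proposition \ref{prop:cusptwist} applies to each factor, and passing to exterior tensor products (the K\"unneth rule for $(\g,K)$-cohomology, the factorwise description of cusp forms on a product, and the compatibility of the $\sigma$-twist with tensor products) yields the analogous statement for $L'$: ${}^\sigma(\Pi'_f\otimes\rho_{P'_f})$ is the finite part of a cuspidal automorphic representation $\Xi'$ of $L'(\A)$, cohomological with respect to ${}^\sigma\! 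E_{w\cdot\mu}$. Here the regularity of $E_\mu$ is exactly what guarantees, via the second assertion of Proposition \ref{prop:cusptwist}, that $\Xi'$ is cuspidal and not merely square-integrable.

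It remains to check that ${}^\sigma\!\varphi_{P'}$ is precisely the associate class of $\Xi'\otimes\rho_{P'}^{-1}$. The target summand $H^q_{\{P'\},{}^\sigma\!\varphi_{P'}}(G',{}^\sigma\! E_\mu)$ is non-zero, being $\sigma$-linearly isomorphic to the non-zero source, so by the same Kostant computation over ${}^\sigma\! E_\mu$ its cuspidal support ${}^\sigma\!\varphi_{P'}$ is represented by a cuspidal $\Psi'$ of $L'(\A)$ with $\Psi'\otimes\rho_{P'}$ cohomological; one then tracks $\sigma^*$ through the identification of these cohomology groups with $(\g'_\infty,K'^\circ_\infty)$-cohomology of modules induced from $L'$ --- equivalently, compares the $G'(\A_f)$-module structures of source and target, whose irreducible constituents have $\A_f$-cuspidal support determined by $\Pi'_f$ respectively $\Psi'_f$ --- to conclude $\Psi'_f\otimes\rho_{P'_f}\cong{}^\sigma(\Pi'_f\otimes\rho_{P'_f})$, i.e.\ $\Psi'\cong\Xi'\otimes\rho_{P'}^{-1}$, the $\rho$-twist not affecting cuspidality. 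I expect the main obstacle to lie precisely in this last matching: making the compatibility of $\sigma^*$ with the cuspidal-support filtration quantitative enough to read off the label as the class of $\Xi'\otimes\rho_{P'}^{-1}$ rather than only abstractly, and, relatedly, checking carefully that the arguments of \cite[Sect.\ 4]{schwfr} transfer to $G'$ without modification. The remaining ingredients --- the Kostant/Frobenius computation, the Levi version of Proposition \ref{prop:cusptwist}, and the regularity of the $E_{w\cdot\mu}$ --- are routine once the framework is in place.
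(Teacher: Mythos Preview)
Your strategy is essentially the same as the paper's: reduce to the Levi via a Frobenius--Kostant computation, observe that regularity of $E_\mu$ forces regularity of the relevant $E_{\mu_w}$, and then invoke Proposition~\ref{prop:cusptwist} on $L'$ to produce the cuspidal $\Xi'$. Where you diverge is in the mechanism for the final matching. You propose to treat the Franke--Schwermer argument as a black box that transfers to $G'$ and then pin down ${}^\sigma\!\varphi_{P'}$ by comparing the $\A_f$-cuspidal supports of the irreducible constituents on source and target. The paper instead works concretely with boundary cohomology: it embeds $H^q(\g'_\infty,K'^\circ_\infty,\mathrm{Ind}_{P'(\A)}^{G'(\A)}[\Pi']\otimes E_\mu)$ into $H^q(\partial_{P'}S_{G'},\mathcal E_\mu)$ (via the description of the boundary stratum as an induction from $L'$), applies the geometric $\sigma^*$ there, and then uses Strong Multiplicity One plus Multiplicity One for $L'$ to build an explicit commutative diagram with the surjections~\eqref{eq:indec3} on both rows. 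This makes the identification of ${}^\sigma\!\varphi_{P'}$ with the class of $\Xi'\otimes\rho_{P'}^{-1}$ a direct diagram chase rather than an appeal to an abstract compatibility.

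Your approach would work, but the step you flag as the ``main obstacle'' is exactly where the paper's boundary-cohomology diagram does real work: it gives a single ambient space on which $\sigma^*$ is \emph{a priori} defined and into which both the source summand (via the induced module) and the candidate target summand embed compatibly. Without that, the assertion that the $G'(\A_f)$-constituents of $H^q_{\{P'\},\varphi_{P'}}$ determine $\Pi'_f$ up to associate conjugacy needs its own justification (it is true, but it is precisely what \cite[Sect.~4]{schwfr} proves in the split case using constant terms, and you would have to check that this part really is group-agnostic). The paper's route sidesteps that verification by replacing it with the Borel--Serre boundary, which is available for any reductive $G'$.
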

\begin{proof}
Recall that $G'=GL'_m$ for some $m\geq 1$. Let $\{P'\}$ be the associate class of the parabolic $F$-subgroup $P'$ with Levi factor $L'=\prod_{i=1}^r GL'_{m_i}$, $\sum_{i=1}^r m_i=m$ and $\varphi_{P'}$ be represented by the cuspidal automorphic representation $\Pi'$ of $L'(\A)$. Assume that $H^q_{\{P'\},\varphi_{P'}}(G',E_\mu)\neq 0$.

If $r=1$, i.e., $P'=G'$, each associate class $\varphi_{G'}$ of cuspidal automorphic representations is a singleton consisting of an equivalence class of a cuspidal automorphic representation $\Pi'$ of $G'(\A)$. Hence, if $H^q_{\{G'\},\varphi_{G'}}(G',E_\mu)\neq 0$, then all its irreducible $G'(\A_f)$-subquotients are in fact irreducible subrepresentations isomorphic to the finite part of the cohomological, cuspidal automorphic representation $\Pi'$, cf. Section \ref{sect:autcoh2}. So $\sigma^*$ maps such a $G'(\A_f)$-subquotient onto the $G'(\A_f)$-representation ${}^\sigma\Pi'_f$. According to Prop.\ \ref{prop:cusptwist}, ${}^\sigma\Pi'_f$ is the finite part of a discrete series representation $\Xi'$ which has non-zero cohomology with respect to ${}^\sigma\! E_\mu$. As $E_\mu$ is regular, $\Xi'$ is cuspidal by Prop.\ \ref{prop:cusptwist} and the assertion of the theorem follows in this case.

Now, let $r\neq1$, i.e., $P'\neq G'$. By the very construction of the space $\mathcal A_{\mathcal J,\{P'\},\varphi_{P'}}(G')$, cf. Section \ref{sect:automf}, and the regularity of $E_\mu$, there is an isomorphism of $G'(\A_f)$-modules
\begin{equation}\label{eq:erster}
H^q(\g'_\infty,K'^\circ_\infty,\textrm{Ind}_{P'(\A)}^{G'(\A)}[\tilde\Pi'\otimes S(\check\a^{G'}_{P',\C})]\otimes E_\mu) \ira H^q_{\{P'\},\varphi_{P'}}(G',E_\mu),
\end{equation}
where $\tilde\Pi'$ is as in Section \ref{sect:automf} and $S(\check\a^{G'}_{P',\C})$ is the algebra of differential operators on the finite-dimensional complex space $\check\a^{G'}_{P',\C}$ of variables of the Eisenstein series attached to $\tilde\Pi'$. All relevant details concerning this construction are contained in Franke--Schwermer \cite{schwfr} 1.2-1.4, 3.3 to which we refer. The existence of the above isomorphism is proved in Franke \cite{franke} Thm. 19 II. (See \cite{grob2} Cor.\ 16 for a version, which takes the given cuspidal support into account.) Let $\m'_\infty$ be the reductive Lie algebra obtained from $\l'_\infty$ by dividing out the diagonally embedded commutative Lie algebra $\a^{G'}_{P'}\hookrightarrow\l'_\infty$ and $K'_{L',\infty}$ be the intersection $K'^\circ_\infty\cap L'_\infty$. Then, in \cite{franke}, pp. 256-257, it is shown that as a $G'(\A_f)$-module,
\begin{equation}\label{eq:indec1}
H^q(\g'_\infty,K'^\circ_\infty,\textrm{Ind}_{P'(\A)}^{G'(\A)}[\tilde\Pi'\otimes S(\check\a^{G'}_{P',\C})]\otimes E_\mu)\cong
\end{equation}
$$ \textrm{Ind}_{P'(\A_f)}^{G'(\A_f)}\left[H^{q-l}(\m'_\infty,K'_{L',\infty},\tilde\Pi'_\infty\otimes E_{\mu_w,\m'_\infty})\otimes\Pi'_f\right],$$
where $E_{\mu_w,\m'_\infty}$ is a uniquely determined highest weight representation of $\m'_\infty$ and $l$ is a certain shift in degrees. In fact, $E_{\mu_w,\m'_\infty}$ is the restriction of the irreducible $L'_\infty$-representation $E_{\mu_w}$ of highest weight $\mu_w=w(\mu+\rho)-\rho$, $w$ being a uniquely determined Kostant representative in $W^{P'}$, cf. Borel--Wallach \cite{bowa} III 1.4 and III 3.3 for a definition of $W^{P'}$ and the uniqueness of $w$. Then, the shift in degrees $l$ equals the length $l(w)$ of $w$, see again \cite{bowa} III 3.3. On the other hand, directly by \cite{bowa}, III Thm.\ 3.3, one obtains that
\begin{equation}\label{eq:indec2}
H^q(\g'_\infty,K'^\circ_\infty,\textrm{Ind}_{P'(\A)}^{G'(\A)}[\Pi']\otimes E_\mu)\cong
\end{equation}
$$ \bigoplus_{a+b=q-l}\textrm{Ind}_{P'(\A_f)}^{G'(\A_f)}\left[H^{a}(\m'_\infty,K'_{L',\infty},\tilde\Pi'_\infty\otimes E_{\mu_w,\m'_\infty})\otimes\bigwedge^b\check{\a}^{G'}_{P',\C}\otimes\Pi'_f\right],$$
revealing \eqref{eq:indec1} as a direct summand in \eqref{eq:indec2} attached to $b=0$. As a consequence, there is also a surjective $G'(\A_f)$-module homomorphism
\begin{equation}\label{eq:indec3}
H^q(\g'_\infty,K'^\circ_\infty,\textrm{Ind}_{P'(\A)}^{G'(\A)}[\Pi']\otimes E_\mu) \twoheadrightarrow H^q_{\{P'\},\varphi_{P'}}(G',E_\mu).
\end{equation}
Now, invoking \cite{bowa} III 3.4.(5) and III 3.4.(14), we see that $H^{q-l}(\l'_\infty,K'_{L',\infty},\Pi'_\infty\otimes\rho_{P'_\infty}\otimes E_{\mu_w})\neq 0$, where $\rho_{P'_\infty}$ is the archimedean component of the character $\rho_{P'}=\rho_{P'_\infty}\otimes\rho_{P'_f}$, which is defined as usual via the Harish-Chandra homomorphism. Observe that the group $K'_{L',\infty}$ is in general neither connected nor contains the full connected component $Z'^\circ_{L',\infty}$ of the center of $L'_\infty$. However, a short moment of thought, using \cite{bowa} I 5.1 and Thm.\ I 5.3, shows that $\Pi_\infty'\otimes\rho_{P'_\infty}$ is really cohomological in our sense, made precise in Sect. \ref{sect:gKcoh}, with respect to the irreducible, algebraic representation $E_{\mu_w}$ of $L'_\infty$. One may check that $E_\mu$ being regular implies that also $E_{\mu_w}$ is regular. Therefore, we may apply Prop.\ \ref{prop:cusptwist} to the cuspidal automorphic representation $\Pi'\otimes\rho_{P'}$ and obtain a cuspidal automorphic representation $\Xi'$ of $L'(\A)$ with the property that $\Xi'_f\cong {}^\sigma\Pi'_f\otimes {}^\sigma\!\rho_{P'_f}$. The regularity of $E_{\mu_w}$ implies that ${}^\sigma\!E_{\mu_w}$ is regular, too, and hence the archimedean components of $\Xi'$ and $\Pi'\otimes\rho_{P'}$ are necessarily essentially tempered. Therefore, our Rem.\ \ref{rem:independent} shows that
$$H^b(\l'_\infty,(K'_{L',\infty}Z'_{L',\infty})^\circ,\Pi'_\infty\otimes\rho_{P'_\infty}\otimes E_{\mu_w})\cong H^b(\l'_\infty,(K'_{L',\infty}Z'_{L',\infty})^\circ,\Xi'_\infty\otimes {}^\sigma\! E_{\mu_w}),$$
for all degrees $b$. From this one may also derive that
\begin{equation}\label{eq:indec4}
H^b(\l'_\infty,(K'_{L',\infty})^\circ,\Pi'_\infty\otimes\rho_{P'_\infty}\otimes E_{\mu_w})\cong H^b(\l'_\infty,(K'_{L',\infty})^\circ,\Xi'_\infty\otimes {}^\sigma\! E_{\mu_w})
\end{equation}
holds for all degrees $b$. Keeping this in mind, we let $\partial_{P'}S_{G'}:=P'(F)\backslash G'(\A)/K'^\circ_\infty$. It is known that
$$H^q(\partial_{P'}S_{G'},\mathcal E_\mu)\cong {}^a\textrm{Ind}^{\pi_0(G'_\infty)\times G'(\A_f)}_{\pi_0(P'_\infty)\times P'(\A_f)}
\left [\bigoplus_{w\in W^{P'}} H^{q-l(w)}(L'(F)\backslash L'(\A)/K'_{L',\infty},\mathcal{E}_{\mu_w})\right ],$$
``${}^a\textrm{Ind}$'' denoting un-normalized or algebraic induction, cf. \cite{schw1447}, 7.1--7.2. This isomorphism allows one to define the $G'(\A_f)$- submodule $$H^q_{cusp}(\partial_{P'}S_{G'},\mathcal E_\mu):={}^a\textrm{Ind}^{\pi_0(G'_\infty)\times G'(\A_f)}_{\pi_0(P'_\infty)\times P'(\A_f)}
\left [\bigoplus_{w\in W^{P'}}\bigoplus_{\Phi'} H^{q-l(w)}(\l'_\infty, K'_{L',\infty}, \Phi'_\infty\otimes E_{\mu_w})\otimes\Phi'_f\right ],$$
the second sum ranging over all equivalence classes of cuspidal automorphic representations $\Phi'$ of $L'(\A)$ which satisfy $\omega_{\Phi'_\infty}|_{Z'^\circ_{L',\infty}}=\omega^{-1}_{E_{\mu_w}}|_{Z'^\circ_{L',\infty}}$. It is an easy observation that the kernel of the surjective map $\pi_0(P'_\infty)\ra\pi_0(G'_\infty)$ is equal to the image of $K'_{L',\infty}/(K'_{L',\infty})^\circ$ in $\pi_0(P'_\infty)$. Hence, one may rewrite the above by
$$\textrm{Ind}^{G'(\A_f)}_{P'(\A_f)}\left [\bigoplus_{w\in W^{P'}}\bigoplus_{\Phi'} H^{q-l(w)}(\l'_\infty, K'_{L',\infty}, \Phi'_\infty\otimes\rho_{P'_\infty}\otimes E_{\mu_w})\otimes\Phi'_f\right ]$$
$$\cong\bigoplus_{\Phi'} H^{q}(\g'_\infty,K'^\circ_\infty,\textrm{Ind}_{P'(\A)}^{G'(\A)}[\Phi']\otimes E_\mu),$$
where the $\Phi'$ now range over all equivalence classes of cuspidal automorphic representations of $L'(\A)$ which satisfy the modified condition $\omega_{\Phi'_\infty}|_{Z'^\circ_{L',\infty}}=\left(\omega_{E_{\mu_w}}\cdot\rho_{P'_\infty}\right)^{-1}|_{Z'^\circ_{L',\infty}}$ for the corresponding, uniquely determined Kostant representative $w\in W^{P'}$. In particular, by what we said above, $H^q(\g'_\infty,K'^\circ_\infty,\textrm{Ind}_{P'(\A)}^{G'(\A)}[\Pi']\otimes E_\mu)$ is a $G'(\A_f)$-submodule of $H^q_{cusp}(\partial_{P'}S_{G'},\mathcal E_\mu)$ and hence of the sheaf cohomology $H^q(\partial_{P'}S_{G'},\mathcal E_\mu)$. Whence, it makes sense to apply $\sigma^*$ to the latter $(\g'_\infty,K'^\circ_\infty)$-cohomology space. Choosing ${}^\sigma\!\varphi_{P'}$ to be the associate class which is represented by $\Xi'\otimes\rho_{P'}^{-1}$, we obtain the following diagram
\small
\begin{displaymath}
\xymatrix{
H^q(S_{G'},\mathcal E_\mu) \ar[d]^{\sigma^*} \ar@{<-^{)}}[r] & H^q_{\{P'\},\varphi_{P'}}(G',E_\mu) \ar@{<<-}[r]&  H^q(\g'_\infty,K'^\circ_\infty,\textrm{Ind}_{P'(\A)}^{G'(\A)}[\Pi']\otimes E_\mu) \ar@{^{(}->}[r] & H^q(\partial_{P'}S_{G'},\mathcal E_\mu)\ar[d]^{\sigma^*}\\
H^q(S_{G'},{}^\sigma\! \mathcal{E}_\mu) \ar@{<-^{)}}[r] & H^q_{\{P'\},{}^\sigma\!\varphi_{P'}}(G',{}^\sigma\! E_\mu) \ar@{<<-}[r]&  H^q(\g'_\infty,K'^\circ_\infty,\textrm{Ind}_{P'(\A)}^{G'(\A)}[\Xi'\otimes\rho_{P'}^{-1}]\otimes {}^\sigma\! E_\mu) \ar@{^{(}->}[r] & H^q(\partial_{P'}S_{G'}, {}^\sigma\! \mathcal E_\mu)              }
\end{displaymath}
\normalsize
Equation\ \eqref{eq:indec4} together with Strong Multiplicity One and Multiplicity One for discrete series representations of $L'(\A)$ finally show that the restriction of $\sigma^*:H^q(\partial_{P'}S_{G'},\mathcal E_\mu)\ra H^q(\partial_{P'}S_{G'},{}^\sigma\!\mathcal E_\mu)$ to the space $H^q(\g'_\infty,K'^\circ_\infty,\textrm{Ind}_{P'(\A)}^{G'(\A)}[\Pi']\otimes E_\mu)$ provides us with a commutative diagram
\begin{displaymath}
\xymatrix{
H^q(S_{G'},\mathcal E_\mu) \ar[d]^{\sigma^*} \ar@{<-^{)}}[r] & H^q_{\{P'\},\varphi_{P'}}(G',E_\mu) \ar@{<<-}[r]&  H^q(\g'_\infty,K'^\circ_\infty,\textrm{Ind}_{P'(\A)}^{G'(\A)}[\Pi']\otimes E_\mu) \ar[d]^{\sigma^*}\\
H^q(S_{G'},{}^\sigma\! \mathcal{E}_\mu) \ar@{<-^{)}}[r] & H^q_{\{P'\},{}^\sigma\!\varphi_{P'}}(G',{}^\sigma\! E_\mu) \ar@{<<-}[r]&  H^q(\g'_\infty,K'^\circ_\infty,\textrm{Ind}_{P'(\A)}^{G'(\A)}[\Xi'\otimes\rho_{P'}^{-1}]\otimes {}^\sigma\! E_\mu) }
\end{displaymath}
This settles the case $r\neq 1$ and hence the theorem.
\end{proof}

\begin{rem}
The regularity of $E_\mu$ is not only assumed for convenience. It guarantees the existence of the isomorphism \eqref{eq:erster}. This may fail without the regularity assumption on $E_\mu$. Indeed, for a general theorem showing this, one may refer the reader to Grobner \cite{grob2}, Thm.\ 22. Moreover, we would like to point out that -- in contrast to the split case, considered by Clozel -- if $E_\mu$ was not assumed to be regular, then $\Xi'_\infty$ (as well as $\Pi'_\infty$) did not need to be essentially tempered.
\end{rem}

\subsection{A complementary view on Thm.\ \ref{thm:3.13}}
Let now $\Pi'\in\mathscr D(G')$ be regular algebraic and assume that its global Jacquet-Langlands transfer $JL(\Pi')$ is cuspidal -- the standard assumption we had to make in Section \ref{sect:regalg}, in order to obtain proper generalizations of the various theorems on regular algebraic representations in the split case. The next theorem is complementary to Thm.\ \ref{thm:3.13'frankeschw} and shows that for any automorphism $\sigma\in \textrm{Aut}(\C)$, the $G'(\A_f)$-equivariant isomorphism $\sigma^*$ respects these properties. Hence, it may also be seen as another generalization of Thm.\ \ref{thm:3.13}.

\begin{thm}\label{thm:3.13'}
Let $\Pi'\in\mathscr D(G')$ be regular algebraic and assume that $JL(\Pi')$ is cuspidal. For all $\sigma\in \textrm{\emph{Aut}}(\C)$, there is a uniquely determined representation $\Xi'\in\mathscr D(G')$ as in Prop.\ \ref{prop:cusptwist}, which is in addition regular algebraic and $JL(\Xi')$ is cuspidal. The action of $\textrm{\emph{Aut}}(\C)$ commutes with taking the global Jacquet-Langlands transfer, i.e., ${}^\sigma\!JL(\Pi')=JL({}^\sigma\Pi')$ for all $\sigma\in \textrm{\emph{Aut}}(\C)$.
\end{thm}
\begin{proof}
Let $\Pi'=\Pi'_\infty\otimes\Pi'_f\in\mathscr D(G')$ be as in the statement of the theorem. By Thm.\ \ref{thm:regalgG'} we know that $\Pi'_\infty$ is cohomological and by
Cor.\ \ref{cor:globalJL} that $\Pi'$ is cuspidal. Hence, by Prop.\ \ref{prop:cusptwist}, there is a discrete series representation $\Xi'$ of $G'(\A)$, which satisfies $\Xi'_f\cong {}^\sigma\Pi'_f$ and has non--vanishing $(\g_\infty',K'^\circ_\infty)$-cohomology with respect to ${}^\sigma\! E_\mu$. Recall from Thm.\ \ref{thm:3.13} that there is a uniquely determined, regular algebraic, cuspidal automorphic representation $\Xi$ of $G(\A)$, which satisfies $\Xi_f\cong {}^\sigma\! (JL(\Pi')_f)$. Now we observe that at all split places $v\in V_f$
$$JL(\Xi')_v\cong \Xi'_v\cong ({}^\sigma\Pi'_f)_v \cong {}^\sigma\!(\Pi'_v)\cong {}^\sigma\!( JL(\Pi')_v)\cong ({}^\sigma\! JL(\Pi')_f)_v\cong \Xi_v.$$
Therefore, the discrete series representation $JL(\Xi')$ and the cuspidal representation $\Xi$ are isomorphic almost everywhere, and hence by the Strong Multiplicity One Theorem and the Multiplicity One Theorem for discrete series representations of $G(\A)$,
$$JL(\Xi')=\Xi.$$
In particular, $JL(\Xi')$ is cuspidal and regular algebraic, whence so is $\Xi'$. Since $\Xi={}^\sigma\!JL(\Pi')$ and $\Xi'={}^\sigma\Pi'$, the result follows.
\end{proof}

\begin{rem}
Our Thm.\ \ref{thm:3.13'} is also a proper generalization of Waldspurger's Thm.\ II.3.2 in \cite{waldsp}.
\end{rem}

\section{Results on rationality fields}
\subsection{}
The next theorem generalizes a result which is well--known in the split case (cf., e.g., Shimura \cite{shimura}, Harder \cite{hardermodsym} p.80, Waldspurger \cite{waldsp}, Cor.\ I.8.3 and first line of p.\ 153, and Clozel \cite{clozel}).

\begin{thm}\label{thm:Q(pi_f)numberfield}
Let $\Pi'\in\mathscr D(G')$ be cuspidal and cohomological. Then $\Q(\Pi'_f)$ is a number field.
\end{thm}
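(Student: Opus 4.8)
The plan is to deduce finiteness of $\Q(\Pi'_f)$ from the corresponding statement on the split side, transported through the global Jacquet--Langlands correspondence. The key point is that $JL$, being characterized locally by $|LJ|_v(JL(\tilde\Pi')_v)=\tilde\Pi'_v$ at all places (Thm.\ \ref{thm:globJL} and Cor.\ \ref{cor:globalJL}), is equivariant for the action of $\mathrm{Aut}(\C)$ on finite parts. I would first make this compatibility precise: for $\sigma\in\mathrm{Aut}(\C)$ and $\Pi'\in\mathscr D(G')$ with $JL(\Pi')$ cuspidal, one has $JL({}^\sigma\Pi')_f\cong{}^\sigma\bigl(JL(\Pi')_f\bigr)$ whenever the left-hand side makes sense. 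This follows because at every split place $v$ we have $JL(\Pi')_v\cong\Pi'_v$, and the $\sigma$-twist acts on $\Pi'_f=\bigotimes'_v\Pi'_v$ place-by-place, so at split places $JL({}^\sigma\Pi')_v$ and ${}^\sigma JL(\Pi')_v$ agree; since both $JL({}^\sigma\Pi')$ and the representation $\Xi$ whose finite part is ${}^\sigma JL(\Pi')_f$ (produced by Thm.\ \ref{thm:3.13} applied to $JL(\Pi')$) are discrete series representations of $G(\A)$ agreeing at almost all places, Strong Multiplicity One for discrete series of $G(\A)$ (as used already in the proof of Thm.\ \ref{thm:3.13'}) forces them to be equal. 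This is exactly the argument already carried out in the proof of Thm.\ \ref{thm:3.13'}, so I would simply invoke it.

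Granting this, the proof is short. By Prop.\ \ref{prop:Q=QJL} (the equality of fields $\Q(\Pi'_f)=\Q(JL(\Pi'_f))$, which is part of the package of results being proved but which I would cite here, or equivalently prove directly as the next step), it suffices to show $\Q(JL(\Pi')_f)$ is a number field. But this is precisely the split-case statement: $JL(\Pi')$ is a regular algebraic cuspidal representation of $G(\A)$ (regular algebraic by hypothesis and Def.\ \ref{def:nonsplit}, cuspidal by assumption), and for such representations $\Q(JL(\Pi')_f)$ is a number field by the theorem of Clozel \cite{clozel} (combined with Waldspurger \cite{waldsp}, Cor.\ I.8.3) referenced just before the statement. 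Concretely: Thm.\ \ref{thm:3.13} shows that $\mathrm{Aut}(\C)$ acts on the finite set of finite parts of cohomological cuspidal automorphic representations of $G(\A)$ that are isomorphic to $JL(\Pi')_f$ at almost all places; since this set is finite, the stabilizer $\mathfrak S(JL(\Pi')_f)$ has finite index in $\mathrm{Aut}(\C)$, and a field fixed by a finite-index subgroup of $\mathrm{Aut}(\C)$ is a number field.

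To avoid a potential circularity with Prop.\ \ref{prop:Q=QJL}, I would actually argue directly on the $G'$ side as follows. Using the $\sigma$-equivariance of $JL$ established above together with the injectivity of $JL$ (Cor.\ \ref{cor:globalJL}), one sees that ${}^\sigma\Pi'_f\cong\Pi'_f$ implies ${}^\sigma\bigl(JL(\Pi')_f\bigr)\cong JL(\Pi')_f$, i.e.\ $\mathfrak S(\Pi'_f)\subseteq\mathfrak S(JL(\Pi')_f)$; hence $\Q(JL(\Pi')_f)\subseteq\Q(\Pi'_f)$. For the reverse containment I would note that, conversely, by Thm.\ \ref{thm:3.13'} the $\sigma$-twist ${}^\sigma\Pi'_f$ is again the finite part of a regular algebraic $\Xi'\in\mathscr D(G')$ with $JL(\Xi')$ cuspidal, and the representation $\Xi'$ is determined by $JL(\Xi')$ (Strong Multiplicity One for discrete series of $G'(\A)$, Badulescu--Renard \cite{ioanrenard}); so if ${}^\sigma\bigl(JL(\Pi')_f\bigr)\cong JL(\Pi')_f$ then $JL(\Xi')\cong JL(\Pi')$, whence $\Xi'\cong\Pi'$ and ${}^\sigma\Pi'_f\cong\Pi'_f$, giving $\mathfrak S(JL(\Pi')_f)\subseteq\mathfrak S(\Pi'_f)$ and therefore $\Q(\Pi'_f)\subseteq\Q(JL(\Pi')_f)$. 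Combining, $\Q(\Pi'_f)=\Q(JL(\Pi')_f)$, which is a number field by the split case. The main obstacle is purely bookkeeping: checking that the $\sigma$-twist genuinely commutes with $JL$ on finite parts and that the relevant Strong Multiplicity One statements apply on both sides; once these are in place, the finiteness is immediate from Clozel's result and the finiteness of the $\mathrm{Aut}(\C)$-orbit.
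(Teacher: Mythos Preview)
Your proof is correct but follows a genuinely different route from the paper's. The paper argues \emph{intrinsically on $G'$}: it invokes Thm.~\ref{thm:3.13'} to know that every ${}^\sigma\Pi'_f$ lands in $H^q_{cusp}(G',{}^\sigma\!E_\mu)$, fixes a small enough level $K'_f$, observes that $H^q_{cusp}(G',{}^\sigma\!E_\mu)^{K'_f}$ is finite-dimensional (and there are only finitely many twists ${}^\sigma\!E_\mu$), and concludes $|\mathrm{Aut}(\C)/\mathfrak S(\Pi'_f)|<\infty$, hence $\Q(\Pi'_f)/\Q$ is finite. Your argument instead \emph{transports the problem to $G$}: you show directly that $\mathfrak S(\Pi'_f)=\mathfrak S(JL(\Pi')_f)$ by combining the place-by-place identity $JL(\Pi')_v=\Pi'_v$ at split $v$, Strong Multiplicity One on both sides, the injectivity of $JL$, and Thm.~\ref{thm:3.13'}; then you quote Clozel's split-case result. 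Both arguments ultimately rest on Thm.~\ref{thm:3.13'} and on a finiteness statement (finite-dimensional cohomology at fixed level for the paper, finiteness of the $\mathrm{Aut}(\C)$-orbit on the $GL_n$ side for you), so they are of comparable depth. Your route has the advantage of yielding $\Q(\Pi'_f)=\Q(JL(\Pi')_f)$ along the way, without passing through the Hecke-eigenvalue computations of Prop.~\ref{prop:fieldcomp}; indeed the paper itself observes in the remark after Prop.~\ref{prop:Q=QJL} that this equality re-proves Thm.~\ref{thm:Q(pi_f)numberfield}. The paper's route, on the other hand, is self-contained on the $G'$ side and does not appeal to the split-case theorem as a black box.
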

\begin{proof}
Let $\Pi'$ be cuspidal and assume that $\Pi'_\infty$ is cohomological with respect to $E_\mu$. By Prop.\ \ref{prop:cusptwist}, for any automorphism $\sigma\in \textrm{Aut}(\C)$, ${}^\sigma\Pi'_f$ is the finite part of a discrete series automorphic representation $\Xi'$ which has non--vanishing cohomology with respect to ${}^\sigma\! E_\mu$. Using Prop.\ \ref{prop:inclusions} we hence obtain inclusions
$${}^\sigma\Pi'_f\hookrightarrow H_{!}^q(G',{}^\sigma\! E_\mu),$$
for all $\sigma\in \textrm{Aut}(\C)$. Take an open, compact subgroup $K'_f$ of $G'(\A_f)$ which is small enough such that $\Pi'^{K'_f}_f\neq 0$. Then for each $\sigma$ also $({}^\sigma\Pi'_f)^{K'_f}\neq 0$ and $H_{!}^q(G',{}^\sigma\! E_\mu)^{K'_f}$ is finite-dimensional. Therefore, $|\textrm{Aut}(\C)/\mathfrak S(\Pi'_f)|$ is finite. By \cite{hung}, V Lem.\ 2.9, $|\textrm{Aut}(\C)/\mathfrak S(\Pi'_f)|$ is an upper bound for $\dim_\Q(\Q(\Pi'_f))=[\Q(\Pi'_f):\Q]$. So, $\Q(\Pi'_f)$ is a number field.
\end{proof}

\subsection{}\label{sect:Qpivs}
Let us also discuss how the local rationality fields $\Q(\Pi'_v)$, $v\in V_f$, and $\Q(\Pi'_f)$ are connected to each other.

\begin{prop}\label{prop:fieldcomp}
Let $\Pi'\in\mathscr D(G')$ be cuspidal and cohomological. Then $\Q(\Pi'_f)$ is the compositum of the fields $\Q(\Pi'_v)$, $v\in V_f-S$, $S\subset V_f$ an arbitrary finite subset containing all non-archimedean places where $\Pi'_v$ ramifies.
\end{prop}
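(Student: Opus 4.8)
The statement asserts that $\Q(\Pi'_f)$ equals the compositum of the local rationality fields $\Q(\Pi'_v)$ as $v$ ranges over $V_f \setminus S$. The plan is to prove the two inclusions separately. For the easy inclusion, observe that if $\sigma \in \mathrm{Aut}(\C)$ fixes $\Pi'_v$ for every $v \in V_f \setminus S$, then ${}^\sigma\Pi'_f$ and $\Pi'_f$ have isomorphic local components at almost all places; since both are finite parts of cuspidal automorphic representations of $G'(\A)$ (here we invoke Theorem~\ref{thm:3.13'}: ${}^\sigma\Pi'_f$ is the finite part of a cuspidal $\Xi' \in \mathscr D(G')$ whose transfer is cuspidal), Strong Multiplicity One together with the Multiplicity One Theorem for discrete series of $G'(\A)$ — Badulescu--Renard \cite{ioanrenard}, Thm.\ 18.1 — forces ${}^\sigma\Pi'_f \cong \Pi'_f$. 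Hence $\sigma \in \mathfrak{S}(\Pi'_f)$, so $\mathfrak{S}(\Pi'_f) \supseteq \bigcap_{v \notin S}\mathfrak{S}(\Pi'_v)$, which upon passing to fixed fields gives $\Q(\Pi'_f) \subseteq \prod_{v \notin S}\Q(\Pi'_v)$. Conversely, if $\sigma$ fixes $\Pi'_f$, i.e.\ ${}^\sigma\Pi'_f \cong \Pi'_f$, then comparing local components at each finite place $v$ gives ${}^\sigma\Pi'_v \cong \Pi'_v$, so $\sigma$ fixes each $\Q(\Pi'_v)$; therefore $\prod_{v \notin S}\Q(\Pi'_v) \subseteq \Q(\Pi'_f)$, using that $\Q(\Pi'_f)$ is a field (a number field, by Theorem~\ref{thm:Q(pi_f)numberfield}) and that the compositum is the smallest field containing all the $\Q(\Pi'_v)$.

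**Key steps, in order.** First I would set up the Galois-theoretic reformulation: by definition of rationality field, $\Q(\Pi'_f)$ is the fixed field of $\mathfrak{S}(\Pi'_f)$ and each $\Q(\Pi'_v)$ the fixed field of $\mathfrak{S}(\Pi'_v)$, so the claimed equality of fields is equivalent to the equality of subgroups $\mathfrak{S}(\Pi'_f) = \bigcap_{v \in V_f \setminus S}\mathfrak{S}(\Pi'_v)$ (the compositum of fixed fields corresponds to the intersection of the groups). Second, the inclusion $\mathfrak{S}(\Pi'_f) \subseteq \mathfrak{S}(\Pi'_v)$ for every $v$ is immediate, since $\Pi'_f = \bigotimes'_{v \in V_f}\Pi'_v$ and $\sigma$-twisting is componentwise on restricted tensor products; this handles one containment. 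Third, for the reverse containment I would take $\sigma \in \bigcap_{v \notin S}\mathfrak{S}(\Pi'_v)$, apply Theorem~\ref{thm:3.13'} to realize ${}^\sigma\Pi'_f$ as the finite part of a cuspidal $\Xi' \in \mathscr D(G')$, note $\Xi'_v \cong {}^\sigma\Pi'_v \cong \Pi'_v$ at all $v \notin S$, and invoke Strong Multiplicity One plus Multiplicity One for the discrete spectrum of $G'(\A)$ to conclude $\Xi'_f \cong \Pi'_f$, hence $\sigma \in \mathfrak{S}(\Pi'_f)$. Fourth, translate back to fixed fields, using Theorem~\ref{thm:Q(pi_f)numberfield} to know $\Q(\Pi'_f)$ is a number field so the Galois correspondence behaves well.

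**Main obstacle.** The genuinely non-formal ingredient — and the reason the hypotheses ``$\Pi'$ regular algebraic'' and ``$JL(\Pi')$ cuspidal'' appear — is the step that upgrades ``${}^\sigma\Pi'_v \cong \Pi'_v$ for almost all $v$'' to ``${}^\sigma\Pi'_f \cong \Pi'_f$''. This requires both that ${}^\sigma\Pi'_f$ actually comes from an automorphic representation in the discrete spectrum (supplied by Theorem~\ref{thm:3.13'}, which itself rests on Clozel's Theorem~\ref{thm:3.13} in the split case transported via the global Jacquet--Langlands correspondence) and a rigidity statement — Strong Multiplicity One — valid for the discrete spectrum of $G'(\A)$; the latter holds because $JL$ is injective and Strong Multiplicity One is known on the $G(\A)$ side, combined with the Multiplicity One Theorem of Badulescu--Renard. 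Once the rigidity input is in place, the remainder is bookkeeping with the Galois correspondence between subgroups of $\mathrm{Aut}(\C)$ and their fixed fields, which is routine.
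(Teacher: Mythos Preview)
Your overall strategy---reducing to the equality of subgroups $\mathfrak S(\Pi'_f)=\bigcap_{v\notin S}\mathfrak S(\Pi'_v)$ and using Theorem~\ref{thm:3.13'} together with Strong Multiplicity One---matches the paper's approach. However, there is a genuine gap in the passage from groups to fields.

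The problematic step is the assertion that ``the compositum of fixed fields corresponds to the intersection of the groups''. Concretely, from $\bigcap_{v\notin S}\mathfrak S(\Pi'_v)\subseteq\mathfrak S(\Pi'_f)$ you only obtain
\[
\Q(\Pi'_f)\;=\;\C^{\mathfrak S(\Pi'_f)}\;\subseteq\;\C^{\bigcap_{v\notin S}\mathfrak S(\Pi'_v)},
\]
and you need the right-hand side to equal the compositum $\E:=\prod_{v\notin S}\Q(\Pi'_v)$. But the Galois connection between subgroups of $\textrm{Aut}(\C)$ and subfields of $\C$ is not a bijection: in general $\C^{\bigcap_v \mathfrak S(\Pi'_v)}$ can strictly contain $\E$ unless each $\mathfrak S(\Pi'_v)$ is \emph{closed}, i.e.\ $\mathfrak S(\Pi'_v)=\textrm{Aut}(\C/\Q(\Pi'_v))$. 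Equivalently, to run your argument you need to know that $\sigma$ fixing $\Q(\Pi'_v)$ pointwise already forces ${}^\sigma\Pi'_v\cong\Pi'_v$, and this is not automatic from the definition of the rationality field.

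This is exactly what the paper supplies through its preparatory Lemma~\ref{lem:Q(pi_v) als Q(...)}: for unramified $\Pi'_v$ one has $\Q(\Pi'_v)=\Q\!\left(f_1(\alpha^{\Pi'_v}),\dots,f_n(\alpha^{\Pi'_v})\right)$ and $\sigma\in\mathfrak S(\Pi'_v)\Leftrightarrow\sigma$ fixes these finitely many generators, so indeed $\mathfrak S(\Pi'_v)=\textrm{Aut}(\C/\Q(\Pi'_v))$. The paper then packages everything inside a finite Galois extension $\F\supseteq\Q(\Pi'_f)$ (Lemmas~\ref{lem:2} and~\ref{lem:3}) where the classical Galois correspondence applies cleanly, and concludes $\Q(\Pi'_f)=\E$. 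Once you insert this closedness fact---or equivalently invoke the explicit Satake description of $\Q(\Pi'_v)$---your argument goes through; without it the inclusion $\Q(\Pi'_f)\subseteq\E$ is unjustified.
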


The proof of Prop.\ \ref{prop:fieldcomp} requires a few preparatory lemmas. Let $v\in V_f$ be a split, non-archimedean place. So $G'_v=G_v=GL_n(F_v)$ and let $\varpi_v$ be an uniformizer for the maximal ideal of the ring of integers $\mathcal O_v$ of $F_v$. Since $\Pi_v$ is an unramified, irreducible admissible representation of $G'_v$, it is well--known that $\Pi_v$ is the unique Langlands quotient $J(\chi_1,...,\chi_n)$ of the induced representation $\textrm{Ind}_{B_v}^{G_v}[\chi_1\otimes...\otimes\chi_n],$
for some unramified characters $\chi_j: F_v^*\rightarrow\C^*$ and $B_v$ being the Borel subgroup of $G_v$. We put
$$\alpha^{\Pi'_v}_j:=|\varpi_v|_v^\frac{n-1}{2}\chi_j(\varpi_v),$$
with $|\cdot|_v$ being the normalized absolute value on $F_v$. Furthermore, for any automorphism $\sigma\in \textrm{Aut}(\C)$ denote by
$$\epsilon_{\sigma,v}=\frac{|\textrm{det}'|_v^{1/2}}{\sigma(|\textrm{det}'|_v^{1/2})}.$$
Then $\epsilon_{\sigma,v}$ is a quadratic character, because by the very definition $|\cdot|_v$ takes values in the rational numbers, whence $|\det'|_v=\sigma(|\det'|_v)$ for any automorphism $\sigma\in \textrm{Aut}(\C)$.
As a last ingredient, recall the {\it elementary symmetric polynomials} $f_1,....,f_n$, cf., e.g., \cite{hung} Appendix to Chp. V:
$$f_j(x_1,...,x_n)=\sum_{1\leq i_1<...<i_j\leq n}x_{i_1}x_{i_2}...x_{i_j}.$$
The next lemma is a generalization of Waldspurger \cite{waldsp}, Lem.\ I.2.3.

\begin{lem}\label{lem:Q(pi_v) als Q(...)}
Let $v\in V_f$ be a split, non-archimedean place and $\Pi_v$ an unramified, irreducible admissible representation of $G'_v=G_v=GL_n(F_v)$.  Then
$$\Q(\Pi_v)=\Q\left(f_1(\alpha^{\Pi'_v}_1,...,\alpha^{\Pi'_v}_n),...,f_n(\alpha^{\Pi'_v}_1,...,\alpha^{\Pi'_v}_n)\right).$$
\end{lem}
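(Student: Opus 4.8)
The plan is to unwind the definition of the rationality field directly in terms of the unramified local Langlands/Satake data. Since $\Pi_v$ is unramified, it is the full induced representation $\mathrm{Ind}_{B_v}^{G_v}[\chi_1\otimes\cdots\otimes\chi_n]$ with $\chi_j$ unramified, and the isomorphism class of $\Pi_v$ is determined precisely by the multiset $\{\chi_1(\varpi_v),\dots,\chi_n(\varpi_v)\}$, equivalently by the multiset of Satake parameters $\{\alpha^{\Pi'_v}_1,\dots,\alpha^{\Pi'_v}_n\}$ (the normalizing factor $|\varpi_v|_v^{(n-1)/2}$ is rational and hence harmless). The key point is to compute, for $\sigma\in\mathrm{Aut}(\C)$, the effect of the $\sigma$-twist on the Satake parameters. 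Following Waldspurger's computation in the $GL_2$ case (\cite{waldsp}, Lem.\ I.2.3), one checks that ${}^\sigma\Pi_v$ is again unramified with Satake parameters obtained from the original ones by applying $\sigma$ \emph{and} multiplying by the quadratic character $\epsilon_{\sigma,v}=|\mathrm{det}'|_v^{1/2}/\sigma(|\mathrm{det}'|_v^{1/2})$ evaluated on $\varpi_v$; but since $|\varpi_v|_v$ is a rational number, $\epsilon_{\sigma,v}(\varpi_v)=1$, so in fact the Satake parameters of ${}^\sigma\Pi_v$ are just $\{\sigma(\alpha^{\Pi'_v}_1),\dots,\sigma(\alpha^{\Pi'_v}_n)\}$.

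From this the lemma follows by a clean symmetric-function argument. First I would record that ${}^\sigma\Pi_v\cong\Pi_v$ if and only if $\sigma$ permutes the multiset $\{\alpha^{\Pi'_v}_j\}$, which, by the fundamental theorem on symmetric functions, is equivalent to $\sigma$ fixing each elementary symmetric polynomial value $f_j(\alpha^{\Pi'_v}_1,\dots,\alpha^{\Pi'_v}_n)$, $1\le j\le n$. Thus $\mathfrak S(\Pi_v)$ coincides with the subgroup of $\mathrm{Aut}(\C)$ fixing the field $K:=\Q\bigl(f_1(\alpha^{\Pi'_v}_1,\dots,\alpha^{\Pi'_v}_n),\dots,f_n(\alpha^{\Pi'_v}_1,\dots,\alpha^{\Pi'_v}_n)\bigr)$. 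Taking fixed fields and invoking that a subfield of $\C$ generated over $\Q$ by finitely many elements equals the fixed field of its pointwise stabilizer in $\mathrm{Aut}(\C)$ (this is the standard Galois-theoretic fact used throughout, cf.\ \cite{hung}, V), we conclude $\Q(\Pi_v)=K$, which is exactly the asserted equality.

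The main obstacle I anticipate is the verification of the twist formula for the Satake parameters, i.e.\ showing that ${}^\sigma\Pi_v$ is unramified with parameters $\sigma(\alpha^{\Pi'_v}_j)$ up to the quadratic character $\epsilon_{\sigma,v}$, and then checking $\epsilon_{\sigma,v}(\varpi_v)=1$. This requires being careful about how the normalized induction interacts with the $\sigma$-twist: the modulus character $\delta_{B_v}^{1/2}$ contributes the half-integral powers of $|\varpi_v|_v$, and one must confirm that $\sigma$ acting on $\mathrm{Ind}_{B_v}^{G_v}[\chi_1\otimes\cdots\otimes\chi_n]$ produces $\mathrm{Ind}_{B_v}^{G_v}[{}^\sigma\chi_1\otimes\cdots\otimes{}^\sigma\chi_n]$ twisted by the discrepancy between $\delta_{B_v}^{1/2}$ and $\sigma(\delta_{B_v}^{1/2})$ — which is precisely $\epsilon_{\sigma,v}$. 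Since $|\cdot|_v$ takes values in $\Q$, this discrepancy is trivial, so no correction actually survives; but I would want to spell this out, mirroring Waldspurger's argument. Everything else — the identification of unramified representations by Satake data, the symmetric-function step, and the passage to fixed fields — is routine.
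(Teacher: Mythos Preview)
Your approach is essentially the paper's: compute ${}^\sigma\Pi_v$ via the $\sigma$-twist formula for induced representations (the paper cites \cite{clozel}, Lem.\ 3.5(ii), which gives ${}^\sigma\mathrm{Ind}_{B_v}^{G_v}[\chi_1\otimes\cdots\otimes\chi_n]\cong\mathrm{Ind}_{B_v}^{G_v}[{}^\sigma\chi_1\epsilon_{\sigma,v}^{n-1}\otimes\cdots\otimes{}^\sigma\chi_n\epsilon_{\sigma,v}^{n-1}]$), translate multiset equality of the inducing characters into $\sigma$ fixing the elementary symmetric functions of the $\alpha^{\Pi'_v}_j$, and take fixed fields.

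One slip to fix: neither $\epsilon_{\sigma,v}(\varpi_v)=1$ nor ``$|\varpi_v|_v^{(n-1)/2}$ is rational'' is true in general when $n$ is even (take $q_v$ not a perfect square and $\sigma$ with $\sigma(\sqrt{q_v})=-\sqrt{q_v}$; then $\epsilon_{\sigma,v}(\varpi_v)=-1$). The correct mechanism, implicit in the paper's passage from the multiset equality of the ${}^\sigma\chi_j\epsilon_{\sigma,v}^{n-1}$ to the symmetric-function condition on the $\alpha_j$'s, is that the $\epsilon_{\sigma,v}^{n-1}$ factor is exactly cancelled by $\sigma$ acting on the normalizing factor $|\varpi_v|_v^{(n-1)/2}$: since $\sigma(|\varpi_v|_v^{(n-1)/2})^2=\sigma(|\varpi_v|_v^{n-1})=|\varpi_v|_v^{n-1}\in\Q$, one has $|\varpi_v|_v^{(n-1)/2}\,\epsilon_{\sigma,v}(\varpi_v)^{n-1}=\sigma(|\varpi_v|_v^{(n-1)/2})$, and hence the \emph{normalized} Satake parameters of ${}^\sigma\Pi_v$ are precisely $\{\sigma(\alpha^{\Pi'_v}_j)\}$. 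With that correction your argument goes through and matches the paper's.
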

\begin{proof}
\begin{eqnarray*}
\sigma\in\mathfrak S(\Pi_v) & \Leftrightarrow & {}^\sigma\Pi_v\cong\Pi_v\\
& \Leftrightarrow & {}^\sigma\!J(\chi_1,...,\chi_n)\cong J(\chi_1,...,\chi_n)\\
& \underset{\textrm{\cite{clozel}, Lem.\ 3.5.(ii)}}{\Leftrightarrow} & J({}^\sigma\chi_1\epsilon_{\sigma,v}^{n-1},...,{}^\sigma\chi_n\epsilon_{\sigma,v}^{n-1})\cong
J(\chi_1,...,\chi_n)\\
& \Leftrightarrow & \{{}^\sigma\chi_1\epsilon_{\sigma,v}^{n-1},...,{}^\sigma\chi_n\epsilon_{\sigma,v}^{n-1}\}=\{\chi_1,...,\chi_n\}\\
& \Leftrightarrow & \sigma\left(f_j(\alpha^{\Pi'_v}_1,...,\alpha^{\Pi'_v}_n)\right)=f_j(\alpha^{\Pi'_v}_1,...,\alpha^{\Pi'_v}_n)\quad\forall 1\leq j\leq n\\
& \Leftrightarrow & \sigma\in \textrm{Aut}\left(\C\Big/\Q\left(f_1(\alpha^{\Pi'_v}_1,...,\alpha^{\Pi'_v}_n),...,f_n(\alpha^{\Pi'_v}_1,...,\alpha^{\Pi'_v}_n)\right)\right).
\end{eqnarray*}
Taking fixed fields proves the lemma.
\end{proof}

\begin{lem}\label{lem:2}
Let $v\in V_f$ be a split, non-archimedean place and $\Pi_v$ an unramified, irreducible admissible representation of $G'_v=G_v=GL_n(F_v)$. Assume that $\Q(\Pi_v)$ is a number field. Then for any finite Galois extension $\F_v$ of $\Q$, containing $\Q(\Pi_v)$,
$$\textrm{\emph{Gal}}(\F_v/\Q(\Pi_v))=\{\sigma\in \textrm{\emph{Gal}}(\F_v/\Q)| {}^{\tilde\sigma}\Pi_v\cong \Pi_v \textrm{ for all lifts } \tilde\sigma\in \textrm{\emph{Aut}}(\C)\}.$$
Moreover, the restriction map $\textrm{\emph{Aut}}(\C)\rightarrow \textrm{\emph{Gal}}(\F_v/\Q)$, sending $\tau$ to $\tau|_{\F_v}$, maps $\mathfrak S(\Pi_v)$ onto $\textrm{\emph{Gal}}(\F_v/\Q(\Pi_v))$.
\end{lem}
\begin{proof}
By Lem.\ \ref{lem:Q(pi_v) als Q(...)},
$${}^{\tilde\sigma}\Pi_v\cong\Pi_v \Leftrightarrow \tilde\sigma\left(f_j(\alpha^{\Pi'_v}_1,...,\alpha^{\Pi'_v}_n)\right)=f_j(\alpha^{\Pi'_v}_1,...,\alpha^{\Pi'_v}_n)\quad\forall 1\leq j\leq n.$$
But since $f_j(\alpha^{\Pi'_v}_1,...,\alpha^{\Pi'_v}_n)\in\Q(\Pi_v)\subseteq\F_v$ for all $1\leq j\leq n$, we obtain
$$\textrm{Gal}(\F_v/\Q(\Pi_v))=\{\sigma\in \textrm{Gal}(\F_v/\Q)| {}^{\tilde\sigma}\Pi_v\cong \Pi_v \textrm{ for all lifts } \tilde\sigma\in \textrm{Aut}(\C)\}.$$
The last assertion is obvious.
\end{proof}

\begin{lem}\label{lem:3}
Let $\Pi'\in\mathscr D(G')$ be cuspidal and cohomological and let $S\subset V_f$ be an arbitrary finite set containing all non-archimedean places where $\Pi'_v$ ramifies. Denote by $\E$ the compositum of the fields $\Q(\Pi'_v)$, $v\in V_f-S$. Then for any finite Galois extension $\F$ of $\Q$, containing $\Q(\Pi'_f)$, the restriction map $r_\F:\textrm{\emph{Aut}}(\C)\rightarrow \textrm{\emph{Gal}}(\F/\Q)$, sending $\tau$ to $\tau|_{\F}$, maps $\mathfrak S(\Pi'_f)$ onto $\textrm{\emph{Gal}}(\F/\E)$.
\end{lem}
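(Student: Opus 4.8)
The plan is to prove the two inclusions $r_\F(\mathfrak S(\Pi'_f))\subseteq\textrm{Gal}(\F/\E)$ and $\textrm{Gal}(\F/\E)\subseteq r_\F(\mathfrak S(\Pi'_f))$ separately, after recording a few preliminaries. Since $JL(\Pi')$ is cuspidal, Thm.\ \ref{thm:Q(pi_f)numberfield} already tells us that $\Q(\Pi'_f)$ is a number field; and since the $\sigma$-twist operates factor by factor on $\Pi'_f=\bigotimes_v\Pi'_v$ while the tensor factorization of an irreducible admissible $G'(\A_f)$-module is unique, one has $\mathfrak S(\Pi'_f)\subseteq\mathfrak S(\Pi'_v)$, hence $\Q(\Pi'_v)\subseteq\Q(\Pi'_f)$, for every $v\in V_f$. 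In particular $\E\subseteq\Q(\Pi'_f)\subseteq\F$ is a number field, so $\textrm{Gal}(\F/\E)$ makes sense; I would also invoke the standard fact that $r_\F\colon\textrm{Aut}(\C)\to\textrm{Gal}(\F/\Q)$ is surjective.

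For the first inclusion I would take $\sigma\in\mathfrak S(\Pi'_f)$; uniqueness of the tensor factorization turns ${}^\sigma\Pi'_f\cong\Pi'_f$ into ${}^\sigma\Pi'_v\cong\Pi'_v$ for all $v\in V_f$, so $\sigma$ fixes every $\Q(\Pi'_v)$, in particular those with $v\in V_f-S$, hence fixes their compositum $\E$, so that $r_\F(\sigma)\in\textrm{Gal}(\F/\E)$. This direction is routine.

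The substance is the reverse inclusion. Given $\tau\in\textrm{Gal}(\F/\E)$, I would use surjectivity of $r_\F$ to choose a lift $\tilde\tau\in\textrm{Aut}(\C)$ with $\tilde\tau|_\F=\tau$, so that $\tilde\tau$ fixes $\E$ pointwise, and then apply Thm.\ \ref{thm:3.13'} to $\Pi'$ and $\tilde\tau$: the twist ${}^{\tilde\tau}\Pi'_f$ is the finite part of a \emph{cuspidal} representation $\Xi'\in\mathscr D(G')$ which is again regular algebraic with $JL(\Xi')$ cuspidal — crucially, $\Xi'$ is an honest automorphic representation of $G'(\A)$. I would then compare $\Xi'$ and $\Pi'$ at finite places: at a split place $v\in V_f-S$ the component $\Pi'_v$ is unramified (as $S$ contains all ramified non-archimedean places), and since $\tilde\tau$ fixes $\Q(\Pi'_v)\subseteq\E$, Lem.\ \ref{lem:2} gives ${}^{\tilde\tau}\Pi'_v\cong\Pi'_v$, hence $\Xi'_v\cong\Pi'_v$. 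As $S$ is finite and only finitely many non-archimedean places are non-split, these $v$ are cofinite in $V_f$, so Strong Multiplicity One for discrete series of $G'(\A)$ (Badulescu--Renard \cite{ioanrenard}, Thm.\ 18.1) forces $\Xi'=\Pi'$; therefore ${}^{\tilde\tau}\Pi'_f\cong\Xi'_f\cong\Pi'_f$, i.e.\ $\tilde\tau\in\mathfrak S(\Pi'_f)$ and $\tau=r_\F(\tilde\tau)\in r_\F(\mathfrak S(\Pi'_f))$.

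The main obstacle is exactly the implication ``$\Xi'$ agrees with $\Pi'$ at almost all split unramified finite places $\Rightarrow$ $\Xi'=\Pi'$'': this needs ${}^{\tilde\tau}\Pi'_f$ to be realized as the finite part of a genuine automorphic representation, which is precisely what Thm.\ \ref{thm:3.13'} provides and which ultimately rests on the hypothesis that $JL(\Pi')$ is cuspidal; once that is in hand, strong multiplicity one does the rest. It is worth noting that the non-split non-archimedean places — and the behaviour of the $\sigma$-twist there — never intervene in the argument, which is why it causes no harm that $S$ is not required to contain them.
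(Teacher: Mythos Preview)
Your proof is correct and follows essentially the same route as the paper's: both arguments rest on Thm.~\ref{thm:Q(pi_f)numberfield}, Lem.~\ref{lem:2}, Thm.~\ref{thm:3.13'}, and Strong Multiplicity One for $G'(\A)$, with the only difference being that you organize the argument as two inclusions while the paper presents it as a chain of equalities $\textrm{Gal}(\F/\E)=\bigcap_{v\in V_f-S}\textrm{Gal}(\F/\Q(\Pi'_v))=\cdots=r_\F(\mathfrak S(\Pi'_f))$. Your explicit restriction to \emph{split} places in $V_f-S$ when invoking Lem.~\ref{lem:2} is in fact slightly more careful than the paper's write-up, which applies Lem.~\ref{lem:2} across all of $V_f-S$ without comment.
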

\begin{proof}
Recall from Thm.\ \ref{thm:Q(pi_f)numberfield} that under the present assumptions $\Q(\Pi'_f)$ is a number field. So it makes sense to talk about a finite Galois extension $\F$ of $\Q$, containing $\Q(\Pi'_f)$. We get
\begin{eqnarray*}
\textrm{Gal}(\F/\E) & = & \bigcap_{v\in V_f-S} \textrm{Gal}(\F/\Q(\Pi'_v))\\
& \underset{\textrm{Lem.\ \ref{lem:2}}}{=} & \bigcap_{v\in V_f-S}\{\sigma\in \textrm{Gal}(\F/\Q)| {}^{\tilde\sigma}\Pi'_v\cong \Pi'_v \textrm{ for all lifts } \tilde\sigma\in \textrm{Aut}(\C)\}\\
& = & \bigcap_{v\in V_f}\{\sigma\in \textrm{Gal}(\F/\Q)| {}^{\tilde\sigma}\Pi'_v\cong \Pi'_v \textrm{ for all lifts } \tilde\sigma\in \textrm{Aut}(\C)\},
\end{eqnarray*}
where the last line uses Prop.\ \ref{prop:cusptwist} and Strong Multiplicity One for discrete series representations of $G'(\A)$. Therefore, we get even more that
$$\textrm{Gal}(\F/\E) = \{\sigma\in \textrm{Gal}(\F/\Q)| {}^{\tilde\sigma}\Pi'_f\cong \Pi'_f \textrm{ for all lifts } \tilde\sigma\in \textrm{Aut}(\C)\}$$
which is obviously the image of $\mathfrak S(\Pi'_f)$ under $r_{\F}$.
\end{proof}

We may now give the

\begin{proof}[Proof of Prop.\ \ref{prop:fieldcomp}:]
Let $\Pi'$ be cuspidal and cohomological. Let $\E$ be the compositum of the fields $\Q(\Pi'_v)$, $v\in V_f-S$. By Thm.\ \ref{thm:Q(pi_f)numberfield}, $\Q(\Pi'_f)$ is a number field. Let $\F$ be the Galois closure of $\Q(\Pi'_f)$. This is a finite Galois extension of $\Q$. We get
\begin{eqnarray*}
\Q(\Pi'_f) & = & \{z\in \C|\quad \sigma(z)=z\quad\forall\sigma\in\mathfrak S(\Pi'_f)\}\\
&= & \{z\in \F|\quad \sigma(z)=z\quad\forall\sigma\in r_\F(\mathfrak S(\Pi'_f))\}\\
& \underset{\textrm{Lem.\ \ref{lem:3}}}{=} & \{z\in \F|\quad \sigma(z)=z\quad\forall\sigma\in \textrm{Gal}(\F/\E)\}\\
& = & \E.
\end{eqnarray*}
\end{proof}

\begin{thm}\label{prop:cuspratio}
Let $\Pi'\in\mathscr D(G')$ be cuspidal and cohomological. Let $E_\mu$ be a highest weight representation of $G'_\infty$ with respect to which $\Pi'_\infty$ is cohomological. Then $\Pi'_f$ is defined over the composite field $\Q(\Pi')=\Q(\mu)\Q(\Pi'_f)$. In particular, $\Pi'_f$ is defined over a number field.
\end{thm}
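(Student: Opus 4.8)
The plan is to realize $\Pi'_f$ inside the interior cohomology of $S_{G'}$, which carries a rational structure over $\Q(\mu)$, and then to descend along the $\sigma$-semilinear isomorphisms of Section \ref{sect:geomcoh}.

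First I would reduce to the cohomological picture. Since $\Pi'$ is regular algebraic with $JL(\Pi')$ cuspidal, Thm.\ \ref{thm:regalgG'} shows that $\Pi'_\infty$ is cohomological and essentially tempered, and Cor.\ \ref{cor:globalJL} shows that $\Pi'$ is cuspidal; after twisting by a power of $|\det'|$ we may assume the central characters of $\Pi'$ and $E_\mu$ are trivial on $\R_+$. Choose a degree $q$ with $H^q(\g'_\infty,K'^\circ_\infty,\Pi'_\infty\otimes E_\mu)\neq 0$. By \eqref{eq:cuspcohdec} and Prop.\ \ref{prop:inclusions}, $\Pi'_f$ occurs as an irreducible $G'(\A_f)$-submodule of $H_!^q(S_{G'},\mathcal E_\mu)$. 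Let $W\subseteq H_!^q(S_{G'},\mathcal E_\mu)$ be the $\Pi'_f$-isotypic component. By \eqref{eq:cuspcohdec}, \eqref{eq:disdecmp} and the Strong Multiplicity One and Multiplicity One theorems of Badulescu--Renard for discrete series of $G'(\A)$, the only discrete series representation contributing $\Pi'_f$-isotypic vectors is $\Pi'$ itself, so $W$ is finite-dimensional and isomorphic to $H^q(\g'_\infty,K'^\circ_\infty,\Pi'_\infty\otimes E_\mu)\otimes\Pi'_f$ as a $G'(\A_f)$-module.

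Next the descent. By Lem.\ \ref{lem:Qstructures}, $H_!^q(S_{G'},\mathcal E_\mu)$ is defined over $\Q(\mu)$; fix such a structure $V_0$ (coming from $\Q(\mu)$-rational Betti cohomology). By Lem.\ \ref{lem:Erational}, $E_\mu$ itself is defined over $\Q(\mu)$, so for every $\sigma\in\textrm{Aut}(\C/\Q(\mu))$ its $\Q(\mu)$-structure gives a canonical identification ${}^\sigma\!\mathcal E_\mu\cong\mathcal E_\mu$, under which the map $\sigma_!^*$ of \eqref{eq:sigmaiso!} becomes the tautological $\sigma$-semilinear automorphism of $V_0\otimes_{\Q(\mu)}\C$. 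I claim that for $\sigma\in\textrm{Aut}(\C/\Q(\Pi'))$ this automorphism stabilizes $W$: indeed $\sigma$ fixes $\Q(\mu)$, so ${}^\sigma\!E_\mu\cong E_\mu$, and $\sigma$ fixes $\Q(\Pi'_f)$, so $\sigma\in\mathfrak S(\Pi'_f)$ — here one uses that $\Q(\Pi'_f)$ is a number field (Thm.\ \ref{thm:Q(pi_f)numberfield}) together with the identity $\mathfrak S(\Pi'_f)=\textrm{Aut}(\C/\Q(\Pi'_f))$, which is implicit in the proofs of Lem.\ \ref{lem:3} and Prop.\ \ref{prop:fieldcomp}. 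Since $\sigma_!^*$ sends the $\tau$-isotypic component of $H_!^q$ onto the ${}^\sigma\!\tau$-isotypic component for each irreducible $\tau$, and ${}^\sigma\Pi'_f\cong\Pi'_f$, it preserves $W$. As $\Q(\Pi')=\Q(\mu)\Q(\Pi'_f)$ is a finite extension of $\Q(\mu)$, the standard Galois-descent argument (cf.\ Clozel \cite{clozel}, pp.\ 122--123, and Waldspurger \cite{waldsp}) now shows that $W$ is defined over $\Q(\Pi')$; concretely, after extending $V_0$ to the $\Q(\Pi')$-structure $V_0\otimes_{\Q(\mu)}\Q(\Pi')$ of $H_!^q(S_{G'},\mathcal E_\mu)$, the $\sigma$-invariants in $W$ form a $G'(\A_f)$-stable $\Q(\Pi')$-form $W_0$ of $W$.

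Finally one must deduce that $\Pi'_f$ itself -- not merely its cohomological multiple $W$ -- is defined over $\Q(\Pi')$, and this is the point I expect to be the main obstacle: a priori $W\cong\Pi'_f\otimes_\C H^q(\g'_\infty,K'^\circ_\infty,\Pi'_\infty\otimes E_\mu)$, and transferring a rational structure from $W$ down to $\Pi'_f$ could be obstructed by a Brauer class attached to the multiplicity space (which at a split real place of $F$ is genuinely of dimension $2$, coming from $\pi_0$). I would dispose of this exactly as Clozel does in \cite{clozel}, Section 3: pass from $K'^\circ_\infty$- to $K'_\infty$-cohomology for the full maximal compact, where the passage introduces only $\pi_0$-characters defined over $\Q$, and work in a cohomological degree in which $\Pi'_f$ occurs with multiplicity one at every archimedean place, so that $\Pi'_f$ inherits the $\Q(\Pi')$-structure directly; equivalently, one identifies $W_0$ with the $\Pi'_f$-Hecke-eigenspace inside $\Q(\mu)$-rational Betti cohomology, the eigenvalue system taking values in $\Q(\Pi'_f)$. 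This yields a $G'(\A_f)$-invariant $\Q(\Pi')$-structure on $\Pi'_f$; and since $\Q(\Pi')=\Q(\mu)\Q(\Pi'_f)$ is the compositum of the number fields $\Q(\mu)$ (Lem.\ \ref{lem:Erational}) and $\Q(\Pi'_f)$ (Thm.\ \ref{thm:Q(pi_f)numberfield}), it is a number field, giving the final assertion. Everything apart from this last descent-through-the-multiplicity step is a direct assembly of the cited results.
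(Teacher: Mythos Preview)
Your argument is correct and reaches the same conclusion, but the route differs from the paper's in emphasis. You run an abstract Galois-descent argument: realize the $\Pi'_f$-isotypic component $W$ inside interior cohomology, show it is stable under $\sigma_!^*$ for $\sigma\in\textrm{Aut}(\C/\Q(\Pi'))$, and descend; the multiplicity issue (that $W\cong\Pi'_f\otimes H^q$) is then handled at the end by passing to a $\pi_0$-isotypic piece where the archimedean factor is one-dimensional. The paper instead goes bottom-up and constructively: it fixes from the start the minimal cohomological degree $b$ and an $\epsilon\in(K'_\infty/K'^\circ_\infty)^*$ with $\dim H^b(\g'_\infty,K'^\circ_\infty,\Pi'_\infty\otimes E_\mu)[\epsilon]=1$, and then, rather than descending the whole isotypic component, it \emph{builds} the $\Q(\Pi')$-form directly as a Hecke eigenspace. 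Concretely, inside $H_!^b(S_{G'},\mathcal E_{\mu,\Q(\Pi')})[\epsilon]^{K'_S}$ it uses Lem.\ \ref{lem:Q(pi_v) als Q(...)} (the spherical Hecke eigenvalues $f_j(\alpha_1^{\Pi'_v},\dots,\alpha_n^{\Pi'_v})$ generate $\Q(\Pi'_v)$) together with Prop.\ \ref{prop:fieldcomp} ($\Q(\Pi'_f)$ is the compositum of the $\Q(\Pi'_v)$ for $v\notin S$) to split off the eigenspace $W_{\Pi'_f}$ over $\Q(\Pi')$; the desired $\Q(\Pi')$-structure on $\Pi'_f$ is then the $\mathcal H(G'(\A_f),\Q(\Pi'))$-span of $W_{\Pi'_f}$.

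Your approach is conceptually cleaner and avoids naming Hecke operators, at the price of relying on $\mathfrak S(\Pi'_f)=\textrm{Aut}(\C/\Q(\Pi'_f))$ --- which, as you observe, is exactly the content of Lem.\ \ref{lem:3} and Prop.\ \ref{prop:fieldcomp}, so no circularity. The paper's approach is more explicit and makes the role of the spherical eigensystem transparent, which is what one wants if the rational structure is later to be used for period computations. Both must invoke the same multiplicity-one device at infinity, and your final ``equivalently, one identifies $W_0$ with the $\Pi'_f$-Hecke-eigenspace'' is precisely the paper's construction.
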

\begin{proof}
By assumption $\Pi'_\infty$ is cohomological with respect to the highest weight representation $E_\mu$. As in Section \ref{sect:autcoh1}, by working with $G'(\A)^{(1)}$ instead of $G'(\A)$, we may henceforth assume without loss of generality that the central characters of $\Pi'_\infty$ and $E_\mu$ are both trivial on $\R_+$.

To proceed, let $b$ be the minimal degree in which the $(\g_\infty',K'^\circ_\infty)$-cohomology of $\Pi'_\infty\otimes E_\mu$ does not vanish and let $r$ be the number of split real places. The group $K'_\infty/K'^\circ_\infty\cong (\Z/2\Z)^r$ acts on the cohomology $H^b(\g_\infty',K'^\circ_\infty,\Pi'_\infty\otimes E_\mu)$. As in \cite{raghuram-shahidi-imrn}, 3.3, or \cite{mahnk}, 3.1.2, we may pick an $\epsilon\in (K'_\infty/K'^\circ_\infty)^*$ such that its isotypic component $H^b(\g_\infty',K'^\circ_\infty,\Pi'_\infty\otimes E_\mu)[\epsilon]$ becomes one-dimensional. Therefore, by Strong Multiplicity One and Multiplicity One for discrete series representations of $G'(\A)$, we may view $\Pi'_f$ in a canonical way as an irreducible $G'(\A_f)$-submodule of $H_!^b(S_{G'},\mathcal E_\mu)[\epsilon]$ -- the $\epsilon$-isotypic component of interior cohomology in degree $b$. Let $S\subset V_f$ be a finite set of places, such that $\Pi'_f$ is unramified outside $S$ and put $K'_S:=\prod_{v\notin S} GL_n(\mathcal O_v)$ and $G'(\A_{S}):=\prod'_{v\notin S} GL_n(F_v)$. For any subfield $\F\subset\C$ let
$$\mathcal H(K'_S,\F):=C_c^\infty\left(G'(\A_{S})/\!\!/K'_S,\F\right)$$
be the $K'_S$-leveled Hecke algebra of $\F$-valued functions. Hence, $H_!^b(S_{G'},\mathcal E_\mu)[\epsilon]^{K'_S}$ is a $\mathcal H(K'_S,\C)$-module in which $\Pi'^{K'_S}_f$ appears in a canonical way as a submodule. In Lem.\ \ref{lem:Qstructures}, we have established a $\Q(\mu)$-structure $H_!^b(S_{G'},\mathcal E_{\mu, \Q(\mu)})$ on interior cohomology. Letting $\Q(\Pi')=\Q(\mu)\Q(\Pi'_f)$ be the compositum of $\Q(\mu)$ and the rationality field of $\Pi'_f$, we obtain a $\Q(\Pi')$-structure
$H_!^b(S_{G'},\mathcal E_{\mu, \Q(\Pi')})$ and hence also a $\Q(\Pi')$-structure $H_!^b(S_{G'},\mathcal E_{\mu, \Q(\Pi')})[\epsilon]$. We now claim that the
$\mathcal H(K'_S,\Q(\Pi'))$-module $H_!^b(S_{G'},\mathcal E_{\mu, \Q(\Pi')})[\epsilon]^{K'_S}$ splits as a direct sum
$$H_!^b(S_{G'},\mathcal E_{\mu, \Q(\Pi')})[\epsilon]^{K'_S}\cong W_{\Pi'_f}\oplus W',$$
where $W_{\Pi'_f}\subset \Pi'^{K'_S}_f$ and the natural map $W_{\Pi'_f}\otimes_{\Q(\Pi')}\C\ra\Pi'^{K'_S}_f$ is an isomorphism of $\mathcal H(K'_S,\C)$-modules. This may be seen as follows: The Hecke algebra $\mathcal H(K'_S,\C)$ decomposes as a restricted tensor product $$\mathcal H(K'_S,\C)=\textrm{$\bigotimes$}'_{v\notin S}\mathcal H(K'_v,\C),$$
$K'_v=GL_n(\mathcal O_v)$, of local Hecke algebras $\mathcal H(K'_v,\C):=C_c^\infty(GL_n(F_v)/\!\!/K'_v,\C)$, each of which is spanned by the Hecke operators $T_{v,j}$, $1\leq j\leq n$, represented by the diagonal matrix $T_{v,j} = {\rm diag}(\varpi_v,\dots,\varpi_v,1,\dots,1)$
having exactly $j$-times $\varpi_v$ on the main diagonal. On a normalized spherical vector of $\Pi'_v$, $v\notin S$, $T_{v,j}$ operates by multiplication by the scalar $f_j(\alpha^{\Pi'_v}_1,...,\alpha^{\Pi'_v}_n)$, cf. Sect.\ \ref{sect:Qpivs}. Therefore, the functional
$$\lambda_{\Pi'_f,S}:\mathcal H(K'_S,\C)=\textrm{$\bigotimes$}'_{v\notin S}\mathcal H(K'_v,\C)\ra\C$$
determined by the local conditions
$$\lambda_{\Pi'_f,S}(T_{v,j})=f_j(\alpha^{\Pi'_v}_1,...,\alpha^{\Pi'_v}_n)$$
defines an eigenfunction for the action of $\mathcal H(K'_S,\C)$ on $H_!^b(S_{G'},\mathcal E_\mu)[\epsilon]^{K'_S}$. Recalling from Lem.\ \ref{lem:Q(pi_v) als Q(...)} that $\Q(\Pi'_v)=\Q\left(f_1(\alpha^{\Pi'_v}_1,...,\alpha^{\Pi'_v}_n),...,f_n(\alpha^{\Pi'_v}_1,...,\alpha^{\Pi'_v}_n)\right)$ and from Prop.\ \ref{prop:fieldcomp} that $\Q(\Pi'_f)$ is the compositum of the fields $\Q(\Pi'_v)$, $v\notin S$, we obtain that
$$H_!^b(S_{G'},\mathcal E_{\mu,\Q(\Pi')})[\epsilon]^{K'_S}\cong W(\lambda_{\Pi'_f,S})\oplus W_0,$$
where $W(\lambda_{\Pi'_f,S})$ is the eigenspace of $\lambda_{\Pi'_f,S}$. But this implies that there exists a $\mathcal H(K'_S,\Q(\Pi'))$-module $W_{\Pi'_f}$, which is a direct summand in $W(\lambda_{\Pi'_f,S})$ such that $W_{\Pi'_f}\subset \Pi'^{K'_S}_f$ and such that the natural map $W_{\Pi'_f}\otimes_{\Q(\Pi')}\C\ra\Pi'^{K'_S}_f$ is an isomorphism, which shows the above claim.

Now, let $V_{\Pi'_f}$ be the $\Q(\Pi')$-span of the $G'(\A_f)$-orbit of $W_{\Pi'_f}$ inside the $G'(\A_f)$-module $H_!^b(S_{G'},\mathcal E_{\Q(\Pi')})[\epsilon]$. In other words, if $\mathcal H(G'(\A_f),\Q(\Pi')):=C_c^\infty(G'(\A_f),\Q(\Pi'))$, then
$$V_{\Pi'_f}=\mathcal H(G'(\A_f),\Q(\Pi'))\cdot W_{\Pi'_f}.$$
We can finish the proof of the first assertion of theorem, if we show that the natural map
$$V_{\Pi'_f}\otimes_{\Q(\Pi')}\C\ra\Pi'_f$$
is an isomorphism. Injectivity follows from the following consideration:
\begin{eqnarray*}
\Pi'_f & = & \otimes_{v\in S}\Pi'_v\otimes \left(\mathcal H(G'(\A_S),\C)\cdot\left(\otimes'_{v\notin S}\Pi'_v\right)^{K'_S}\right)\\
 & = & \mathcal H(G'(\A_f),\C)\cdot\left(\Pi'^{K'_S}_f\right)\\
 & \cong & \mathcal H(G'(\A_f),\C)\cdot\left(W_{\Pi'_f}\otimes_{\Q(\Pi')}\C\right)\\
 & \supseteq & \left(\mathcal H(G'(\A_f),\Q(\Pi'))\cdot W_{\Pi'_f}\right)\otimes_{\Q(\Pi')}\C\\
 & = & V_{\Pi'_f}\otimes_{\Q(\Pi')}\C;
\end{eqnarray*}
while surjectivity is a consequence of the irreducibility of $\Pi'_f$. Hence, $V_{\Pi'_f}\otimes_{\Q(\Pi')}\C\cong\Pi'_f$.

Recalling Lem.\ \ref{lem:Erational} and Thm.\ \ref{thm:Q(pi_f)numberfield} shows that $\Q(\Pi')$ is a number field. Hence, also the second assertion of the theorem follows and the proof is complete.
\end{proof}

\begin{cor}
Let $\Pi\in\mathscr D(G)$ be cuspidal and cohomological. Then $\Pi_f$ is defined over $\Q(\Pi_f)$.
\end{cor}
\begin{proof}
By Clozel \cite{clozel}, p.122, $E_\mu$ is defined as a representation of $G(F)$ over $\Q(E_\mu)$, whence so are the $G(\A_f)$-modules $H^q(S_{G},\mathcal E_\mu)$ and $H_!^q(S_{G},\mathcal E_\mu)$. Our proof of Thm.\ \ref{prop:cuspratio} hence shows that $\Pi_f$ is defined over $\Q(\Pi_f)\Q(E_\mu)$. It hence suffices to prove that $\Q(\Pi_f)\supseteq\Q(E_\mu)$: If $\sigma\in$ Aut$(\C)$ fixes $\Pi_f$, i.e., is in $\mathfrak S(\Pi_f)$, it must fix the full representation ${}^\sigma\Pi:=\Xi$, by Strong Multiplicity One. In particular, ${}^\sigma\Pi_\infty\cong\Pi_\infty$. Since ${}^\sigma\Pi_\infty$ is cohomological with respect to ${}^\sigma\! E_\mu$ and $\Pi_\infty$ is cohomological with respect to $E_\mu$, this implies that ${}^\sigma\Pi_\infty$ is also cohomological with respect to $E_\mu$, too. As a consequence, ${}^\sigma\! E_\mu$ and $ E_\mu$ must have the same infinitesimal character, whence, as both of these representations are irreducible, finite-dimensional algebraic, necessarily ${}^\sigma\! E_\mu\cong E_\mu$. This implies that $\sigma\in\mathfrak S(E_\mu)$, or, otherwise put, $\mathfrak S(\Pi_f)\subseteq\mathfrak S(E_\mu)$. Taking fixed fields shows $\Q(\Pi_f)\supseteq\Q(E_\mu)$.
\end{proof}

Finally, let us investigate the interplay between rationality fields and the global Jacquet-Langlands transfer. Doing so, we get the following Proposition.

\begin{prop}\label{prop:Q=QJL}
Let $\Pi'\in\mathscr D(G')$ be regular algebraic and assume that $JL(\Pi')$ is cuspidal. Then
$$\Q(\Pi'_f)=\Q(JL(\Pi')_f).$$
\end{prop}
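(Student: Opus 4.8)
The plan is to reduce the statement to Proposition \ref{prop:fieldcomp}, which expresses both rationality fields as compositums of local rationality fields at a cofinite set of non-archimedean places, and then to use that the global Jacquet--Langlands transfer is the identity on local components at split non-archimedean places.

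The first step is to choose the finite set $S\subset V_f$ appropriately. Recall from Section \ref{sect:gendivalg} that the set of places at which $D$ is non-split is finite. So let $S\subset V_f$ be a finite subset containing: all non-archimedean places at which $D$ is non-split; all non-archimedean places where $\Pi'_v$ ramifies; and all non-archimedean places where $JL(\Pi')_v$ ramifies. For every $v\in V_f-S$ the place $v$ is split, hence $G'_v=G_v=GL_n(F_v)$, and by the defining property of the global transfer (Theorem \ref{thm:globJL} together with the remark following it, and Corollary \ref{cor:globalJL}) we have $JL(\Pi')_v\cong\Pi'_v$. In particular $\Q(\Pi'_v)=\Q(JL(\Pi')_v)$ for all $v\in V_f-S$, and both are unramified representations of $GL_n(F_v)$.

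The second step applies Proposition \ref{prop:fieldcomp} twice with this single set $S$. Applied to $\Pi'$ it shows that $\Q(\Pi'_f)$ is the compositum of the fields $\Q(\Pi'_v)$, $v\in V_f-S$. Applied in the split case $D=F$ — note that $JL(\Pi')\in\mathscr D(G)$ by Corollary \ref{cor:globalJL}, that it is regular algebraic by Definition \ref{def:nonsplit}, and that it is cuspidal by hypothesis, so the standing assumption of Proposition \ref{prop:fieldcomp} (which in the split case just says the representation in question is cuspidal) is met — it shows that $\Q(JL(\Pi')_f)$ is the compositum of the fields $\Q(JL(\Pi')_v)$, $v\in V_f-S$. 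Since the two families of local fields agree term by term by the first step, the two compositums coincide, giving $\Q(\Pi'_f)=\Q(JL(\Pi')_f)$.

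I do not expect a genuine obstacle here: the substantive work is already contained in Proposition \ref{prop:fieldcomp} and in the description of $JL$ at split places. The only points requiring care are that we are entitled to invoke Proposition \ref{prop:fieldcomp} in the degenerate split case $D=F$, and that the finite set $S$ can be chosen simultaneously adapted to $\Pi'$ and to $JL(\Pi')$; the latter is possible because each of these two representations is unramified outside a finite set and $D$ is split outside a finite set.
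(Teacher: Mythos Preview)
Your proof is correct and follows essentially the same approach as the paper: reduce to Proposition \ref{prop:fieldcomp} applied to both $\Pi'$ and $JL(\Pi')$, using that $JL(\Pi')_v\cong\Pi'_v$ at split non-archimedean places. Your choice of $S$ is in fact slightly more careful than the paper's (which only takes the places where $\Pi'_v$ ramifies), since you explicitly include the non-split places and the places where $JL(\Pi')_v$ ramifies, making the applicability of Proposition \ref{prop:fieldcomp} on both sides transparent.
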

\begin{proof}
Let $\Pi'$ be regular algebraic such that $JL(\Pi')$ is cuspidal. Let $S$ be the finite set of non-archimedean places $v\in V_f$ where $\Pi'_v$ ramifies. By Cor.\ \ref{cor:globalJL}, $\Q(\Pi'_v)=\Q(JL(\Pi')_v)$ for all $v\in V_f-S$. The proposition now follows from Prop.\ \ref{prop:fieldcomp}.
\end{proof}

\subsection{}
The results of this paper suggest the following
generalization of Clozel's \cite[Conjectures 3.7 and 3.8]{clozel}:

\begin{conj}
Let $\Pi'\in\mathscr D(G')$ be cuspidal. Then the following are equivalent:
\begin{enumerate}
\item[(i)] $\Pi'_f$ is defined over a number field.
\item[(ii)] $\Pi'$ is algebraic.
\end{enumerate}
\end{conj}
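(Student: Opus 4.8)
The plan is to reduce the whole equivalence, via the global Jacquet--Langlands transfer $JL$, to Clozel's Conjectures 3.7--3.8 of \cite{clozel} for cuspidal automorphic representations of $GL_n/F$, and to isolate which parts of the reduction are unconditional.

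\textbf{The easy part of (ii)$\Rightarrow$(i).} By Definition \ref{def:nonsplit}, ``$\Pi'$ algebraic'' means $\Pi:=JL(\Pi')\in\mathscr D(G)$ is algebraic, and by Corollary \ref{cor:globalJL} it is cuspidal. If $\Pi$ is moreover regular algebraic, then $\Pi_\infty$ is cohomological and essentially tempered (Theorem \ref{thm:regalgG'}, via Theorem \ref{thm:regalg=cusp}), and the assertion that $\Pi'_f$ is defined over a number field is exactly Theorem \ref{prop:cuspratio}. If $\Pi$ is algebraic and cohomological but not regular, the plan is to fix the (now possibly singular) highest weight $\mu$ attached to the algebraic datum --- here the Purity Lemma \ref{lem:pureteG'} pins down the infinitesimal character --- and to rerun the Hecke-eigenspace argument of Theorem \ref{prop:cuspratio} inside the full decomposition of $H^q(S_{G'},\mathcal E_\mu)$ along associate classes $\{P'\}$ and cuspidal supports $\varphi_{P'}$ (Section \ref{sect:autcoh1}) rather than inside interior cohomology; this requires a version of Theorem \ref{thm:3.13'frankeschw} valid without the regularity hypothesis on $E_\mu$, which is the only genuinely new input in this sub-case.

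\textbf{Everything else.} If $\Pi=JL(\Pi')$ is algebraic but not cohomological --- for instance of limit-of-discrete-series type at some archimedean place --- then ``$\Pi'_f$ defined over a number field'' is no longer detected by automorphic cohomology, and the implication (ii)$\Rightarrow$(i) becomes as hard as its split-case counterpart. The same structure governs the converse (i)$\Rightarrow$(ii): from ``$\Pi'_f$ defined over a number field'' one first transports the rationality to the split form, using Proposition \ref{prop:fieldcomp} (so $\Q(\Pi'_f)$ is the compositum of the $\Q(\Pi'_v)$ over the split places) together with Lemma \ref{lem:Q(pi_v) als Q(...)} (each $\Q(\Pi'_v)$ is generated by the elementary symmetric functions of the Satake parameters of $JL(\Pi')_v=\Pi'_v$), whence $\Q(JL(\Pi')_f)=\Q(\Pi'_f)$ is a number field by Proposition \ref{prop:Q=QJL}; one then needs the split-case converse, Clozel's Conjecture 3.7, applied to the cuspidal representation $JL(\Pi')$ of $GL_n(\A)$, to conclude that $JL(\Pi')$ is algebraic and hence, by Definition \ref{def:nonsplit}, that $\Pi'$ is algebraic.

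\textbf{The main obstacle} is therefore Clozel's conjecture itself on $GL_n/F$: there is no known purely automorphic mechanism forcing the archimedean Langlands parameters of a cuspidal $\Pi$ to be algebraic from number-field rationality of $\Pi_f$ alone, the only available route being through associated Galois representations and local--global compatibility, which is established only in restricted (essentially (conjugate-)self-dual, cohomological) settings. What the reduction does yield unconditionally is the precise statement that the inner-form conjecture for $\mathscr D(G')$ with $JL(\Pi')$ cuspidal is equivalent to Clozel's conjecture for those cuspidal representations of $GL_n/F$ that lie in the image of $JL$: the equality of rationality fields (Proposition \ref{prop:Q=QJL}) and the split-place Satake description (Lemma \ref{lem:Q(pi_v) als Q(...)}) make the transfer of rationality data lossless in both directions --- which is exactly the point of the Sch\'olion.
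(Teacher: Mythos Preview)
This statement is explicitly a \emph{Conjecture} in the paper, offered as the inner-form analogue of Clozel's open Conjectures~3.7--3.8 for $GL_n$; the paper gives no proof and none is expected. So there is no ``paper's own proof'' to compare against, and your write-up should be read as a reduction argument rather than a proof. On that level your analysis is sound and matches the spirit of the paper's Sch\'olion: Definition~\ref{def:nonsplit} makes ``$\Pi'$ algebraic'' literally synonymous with ``$JL(\Pi')$ algebraic'', so both directions are pushed to cuspidal $GL_n$, where the regular-algebraic sub-case of (ii)$\Rightarrow$(i) is exactly Theorem~\ref{prop:cuspratio} and everything else is blocked by Clozel's conjecture itself.

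There is one circularity you should flag. In the direction (i)$\Rightarrow$(ii) you invoke Proposition~\ref{prop:fieldcomp} and Proposition~\ref{prop:Q=QJL} to transport rationality across $JL$, but both are stated and proved in the paper \emph{under the standing hypothesis that $\Pi'$ is regular algebraic}: their proofs pass through Lemma~\ref{lem:3} and hence Theorem~\ref{thm:3.13'}, which needs the ${}^\sigma\Pi'_f$ to be automorphic. Starting only from ``$\Pi'_f$ defined over a number field'' you do not have that input, so you cannot quote those propositions as written. The repair is to argue place-by-place: at every split $v$ one has $JL(\Pi')_v=\Pi'_v$, and at the finitely many non-split $v$ the local Jacquet--Langlands correspondence is ${\rm Aut}(\C)$-equivariant (the defining character identities are algebraic), so $\mathfrak S(\Pi'_f)\subseteq\mathfrak S(JL(\Pi')_f)$ directly and $\Q(JL(\Pi')_f)\subseteq\Q(\Pi'_f)$ without appealing to the regular-algebraic results. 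Even then, note that this only transports the \emph{rationality field}; transporting an actual $\F$-structure on $JL(\Pi')_f$ --- which is what Clozel's Conjecture~3.7 takes as its hypothesis --- requires an additional descent step that you have not supplied. So the honest summary is: the inner-form conjecture reduces to Clozel's split conjecture modulo (a) the ${\rm Aut}(\C)$-equivariance of local $|LJ|_v$ at non-split $v$, and (b) a descent of rational structures across $JL$ at those finitely many places.
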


\bigskip

\end{document}